\definecolor{DarkGreen}{rgb}{0.0, 0.5, 0.0}
\definecolor{MidnightBlue}{rgb}{0,0.44,0.57}
\definecolor{TableHighlight}{rgb}{0.9,0.9,0.9}
\definecolor{TableHeader}{rgb}{0.8,0.8,0.8}
\newcommand\N{{\mathbb N}}
\newcommand\RR{{\mathbb R}}
\newcommand\CC{{\mathbb C}}
\newcommand\T{{\mathbb T}}
\newcommand\TP{{\mathbb{TP}}}
\newcommand{\set}[1]{\ensuremath{ \left\lbrace  #1 \right\rbrace }}
\newcommand{\ma}{\begin{pmatrix} }      \newcommand{\trix}{\end{pmatrix}}
\newcommand{\sma}{\left(\begin{smallmatrix} }       \newcommand{\strix}{\end{smallmatrix}\right)}
\newcommand{\inner}[1]{\left\langle \, #1  \,\right\rangle}
\newcommand\SetOf[2]{\left\{#1 \mid #2\right\}}
\DeclareMathOperator{\conv}{conv}
\DeclareMathOperator{\cone}{cone}
\DeclareMathOperator{\interior}{int}
\DeclareMathOperator{\tconv}{tconv}
\DeclareMathOperator{\rk}{rk}
\DeclareMathOperator{\trop}{trop}
\DeclareMathOperator{\sgn}{sgn}
\DeclareMathOperator{\codim}{codim}
\newcommand\R{{\mathcal R}} 
\newcommand\Rplus{{\mathcal R_+}}
\newcommand\C{{\mathcal C}} 
\newcommand\Cplus{{\mathcal C_+}} 
\DeclareMathOperator{\lt}{lt} 
\DeclareMathOperator{\lc}{lc} 
\DeclareMathOperator{\val}{val}
\newcommand\tropcplus{\trop^{+{\mathcal C}}} 
\newcommand\troprplus{\trop^{+{\mathcal R}}} 
\DeclareMathOperator{\Trop}{Trop}
\newcommand\tropcombplus{\Trop^+} 
\DeclareMathOperator{\initial}{in}
\newcommand{\tropplus}{\trop^+} 
\newcommand{\sign}{s} 
\newcommand{\tropsigned}{\trop^\sign} 
\newcommandx{\tropdet}[2][2 = {d,n}, 1=r]{T_{#2}^{#1}} 
\newcommandx{\tropdetplus}[2][2 = {d,n}, 1=r]{(\tropdet[#1][#2])^+}
\newcommand{\Birk}{B}
\newcommand{\Sym}{S}
\newcommand{\BirG}{\mathcal{G}}
\newcommandx{\prevar}[2][2 = {d,n}, 1=r]{P_{#2}^{#1}}
\newcommandx{\prevarplus}[2][2 = {d,n}, 1=r]{(\prevar[#1][#2])^+}
\newcommand{\Ph}{P} 
\newcommand{\Phspace}[1][d,n]{\mathcal{BPT}_{ \kern-2.5 pt #1}} 
\newcommand{\bicoltree}{P^{(R,G)}}
\newcommand{\Gr}[1][2,d+n]{{\rm trop}\left({\rm Gr}(#1)\right)} 
\newcommand{\Grsub}[1][(R,G)]{\mathcal{UPT}^{#1}} 
\newcommand{\Dr}[1]{Dr(#1)} 
\newcommand{\Bembedded}{B_{r+1}^{IJ}} 
\newcommand{\G}[1][C]{\Gamma(#1)}
\newcommand{\Gc}[1][C]{\Gamma(#1)^c}
\newcommand{\Gpos}{\mathcal{G}_{pos}}
\newtheorem{theorem}{Theorem}[section]
\newtheorem{lemma}[theorem]{Lemma}
\newtheorem{prop}[theorem]{Proposition}
\newtheorem{corollary}[theorem]{Corollary}
\theoremstyle{definition}
\newtheorem{definition}[theorem]{Definition}
\newtheorem{example}[theorem]{Example}
\newtheorem{remark}[theorem]{Remark}
\newtheorem{const}[theorem]{Construction}
\newtheorem{question}[theorem]{Question}
\crefname{ineq}{inequality}{inequalities}
\crefname{prop}{Proposition}{Propositions}
\title{\sc{Tropical Positivity and Determinantal Varieties}}
\author{Marie-Charlotte Brandenburg \and Georg Loho \and Rainer Sinn}
\date{}
\begin{document}
\maketitle

\begin{abstract}
  We initiate the study of positive-tropical generators as positive analogues of the concept of tropical bases.
  Applying this to the tropicalization of determinantal varieties, we develop criteria for characterizing their positive part.
  We focus on the study of low-rank matrices, in particular matrices of rank $2$ and $3$. 
  Moreover, in the case square-matrices of corank $1$, we fully classify the signed tropicalization of the determinantal variety, even beyond the positive part.
\end{abstract}

\section{Introduction}

Tropicalization is a modern and powerful tool for understanding algebraic varieties via a polyhedral `shadow', to which combinatorial tools can be applied (for instance to solve enumerative problems). We are particularly interested in identifying the tropicalization of semi-algebraic subsets of algebraic varieties as a subset of the tropicalization of the whole (complex) variety. Specifically, we care about the \emph{positive part} of an algebraic variety, which arises in various applications from combinatorial optimization \cite{slackzbMATH07105541} to physics \cite{SpeyerWilliams:2021,ArkaniHamedLamSpradlin:2021} and statistics \cite{statszbMATH06559631}.
The tropicalizations of the positive parts of many classical varieties have been studied before. In this work, we focus on determinantal varieties inspired by applications to optimization.

A finite generating set of the vanishing ideal of a given variety (in other words, an algebraic description) can be tropicalized to define a polyhedral complex known as a tropical prevariety.
If this happens to coincide with the tropicalization of the variety itself, the generating set is called a \emph{tropical basis}.
We coin the notion of \emph{positive-tropical generators} as an analog of this property for the positive part. For determinantal varieties, all cases where the appropriate minors form a tropical basis have been classified in a series of works \cite{develin_ranktropicalmatrix,chan_4x4minors,shitov_whendor}. We take the first steps towards a characterization when they are also positive-tropical generators.

This question has been studied before for other varieties: 
\cite{SpeyerWilliams:2021,ArkaniHamedLamSpradlin:2021} showed that the 3-term Plücker relations form a set of positive-tropical generators of the tropical Grassmannian (even though they are, in general, not a tropical basis). 
The main result of \cite{Boretsky:2021} implies that the tropicalizations of the incidence Plücker relations form a set of positive-tropical generators of the tropical complete flag variety. 
For the tropical Pfaffian, \cite[Corollary 4.5]{ruiz22_multitriangulationstropicalpfaffians} implies that the polynomials defining the tropical Pfaffian prevariety, when restricting to a certain (Gröbner) cone, form a set of positive-tropical generators of the restriction of the tropical Pfaffian to this cone. 
In the context of cluster varieties, the proof of \cite[Proposition 4.1]{JahnLoeweStump:2021} implies that the generators of the cluster variety form a positive-tropical generating set (although it is unknown whether they form a tropical basis).

The notions of positivity differ in tropical geometry, e.g. distinguishing between positive solutions over the complex Puisseux series and positive solutions that are fully real. 
We therefore also introduce the notion of \emph{really positive-tropical generators}, which cut out the fully real, positive part.
Inspired by Viro's patchworking \cite{Viro:1983,Viro:2006} -- a combinatorial tool to construct real algebraic curves with prescribed topology --  we extend this idea of positive generators to arbitrary sign patterns, introducing the notion of \emph{(really) signed-tropical generators}. 
Generating sets for signed tropicalizations have been studied in \cite{tabera15_realtropicalbases} under the name `real tropical bases'.
Really signed-tropical generators turn out to allow for more flexibility and may exist even if real tropical bases do not.

Our main results are combinatorial criteria for the (signed) tropicalization of determinantal varieties, i.e. the set of matrices of bounded rank \cite{MillerSturmfels:2005}. This variety is closely related to the Grassmannian. 
For this study, we introduce the \emph{triangle criterion}, which is our main tool for identifying positivity. This criterion is purely combinatorial, and relies on the graph structure of the Birkhoff polytope. 
As a special case, we consider the determinantal varieties of low rank matrices, i.e. matrices of rank $2$ and $3$. 

In rank $2$, the $(3\times 3)$-minors form a tropical basis \cite{develin_ranktropicalmatrix}
and they are positive-tropical generators by \Cref{th:rank-2-positive-generators}. The points of the tropical variety (of matrices of Kapranov rank at most $2$) are matrices, whose column span is contained in a tropical line and the columns can be interpreted as marked points on this line. This is how Develin, and Markwig and Yu associate a bicolored phylogenetic tree to such a matrix in \cite{develin_modulispacen,markwig_spacetropicallycollinear}. We show that this tree determines the positivity of the tropical matrix: The (tropical) matrix lies in the tropicalization of the positive part if and only if the associated tree is a caterpillar tree (\Cref{cor:positive-trees-rk-2}).
This relies on the fact, that the nonnegative rank is equal to the rank for a real matrix of rank $2$ (with nonnegative entries) and a result from \cite{ardila_tropicalmorphismrelated}, showing that the positive part consists precisely of those matrices with Barvinok rank $2$ (\Cref{th:positive-rank-2-barvinok}).
The construction of bicolored phylogenetic trees realizes the tropical determinantal variety combinatorially as a subfan of the tropical Grassmannian \cite{markwig_spacetropicallycollinear}.
In the spirit of \cite{markwig_spacetropicallycollinear}, we establish a bijection on the level of the corresponding matrices and tropical Plücker vectors, highlighting that this bijection is induced by a simple coordinate projection (\Cref{th:pluecker-bijection}). 

In rank $3$, the $(4\times 4)$-minors are in general not a tropical basis and we do not know if they are positive-tropical generators. For the combinatorial criterion, we thus only obtain a necessary condition for positivity -- or, in other words, a combinatorial certificate of non-positivity (\Cref{th:starship-critereon}), which we call Starship Criterion. In this case, the column span of a matrix in the tropicalization of the determinantal variety is contained in a tropical plane. By \cite{herrmann_howdrawtropical}, a tropical plane is uniquely determined by its tree arrangement (namely the trees obtained by intersecting the plane with the hyperplanes at infinity). However, we show that the bicolored tree arrangement that we can derive from a tropical matrix (of Kapranov rank at most $3$) does not contain sufficient information to determine positivity: The main issue is that some of the marked points (coming from the columns of the matrix) can be on bounded faces of the tropical plane, whereas the tree arrangement is unable to capture this information (\Cref{ex:bicolored-tree-arrangements}). However, if the tropical matrix is positive, then the resulting arrangement of bicolored phylogenetic trees solely consists of caterpillar trees (\Cref{th:bicolored-tree-arrangements}).

In corank $1$, we show that the characterization of positivity for the determinantal hypersurface extends nicely to the other orthants (\Cref{lem:sign-flipping}).
 This heavily relies on the fact that in this case the tropical prevariety coincides with the tropical variety.

\medskip

Our paper is structured as follows:
In \Cref{section: positivity} we discuss the different notions of positivity in tropical geometry and generators of positivity. We extend this in \Cref{sec:signed-tropical-generators} to arbitrary orthants and we introduce tropical determinantal varieties in \Cref{sec:determantal-varieties}. 
\Cref{sec:hypersurfaces} covers determinantal hypersurfaces, whose Newton polytope is the Birkhoff polytope. 
In this section, we begin the combinatorial translation of positivity by introducing \emph{cartoons}, which leads to the triangle criterion in terms of cartoons, followed by its geometric version.
We extend this to all orthants in \Cref{sec:extension-to-all-orthants}. In \Cref{sec:labels}, we explain how one can describe maximal cones of the determinantal prevariety by unions of perfect matchings, and obtain the triangle criterion in terms of bipartite graphs. In \Cref{sec:rank-2}, we consider the special case of rank $2$, and describe a bijection between a subfan of the Grassmannian and the tropical determinantal variety. In \Cref{sec:rank-3}, we consider the rank 3 case. We obtain the starship criterion for positivity and consider bicolored tree arrangements.

\bigskip
\textbf{Acknowledgments.} We thank Jorge Olarte for fruitful discussions, and pointing us to \Cref{ex:tropical-linear-space}.
We are grateful to Bernd Sturmfels for suggesting the references on tropically collinear points.
We thank Michael Joswig for pointing us to~\cite{ruiz22_multitriangulationstropicalpfaffians} and Christian Stump for explaining the construction in~\cite{JahnLoeweStump:2021}. 

\section{Positivity in tropical geometry}\label{section: positivity}

In this section, we describe different notions of positivity that can be found in the literature. Based on the differences in these notions, we introduce \emph{(really) positive-tropical generating sets}, which characterize the (real) positive part of a tropical variety. We then generalize this to \emph{(really) signed-tropical generating sets}, which describe the signed tropicalization of a variety with respect to a fixed orthant, and discuss the differences between these notions. Finally, we introduce \emph{tropical determinantal varieties}, the main protagonists in this article.

\subsection{Notions of positivity}\label{sec:notions-of-positivity}

Let $\C = \bigcup_{n=1}^\infty \CC((t^{1/n}))$ and  $\R = \bigcup_{n=1}^\infty \RR((t^{1/n}))$ be the fields of Puisseux series over $\CC$ and $\RR$, respectively.
We denote by $\lc(x(t))$ the leading coefficient of a Puisseux series $x(t)$, i.e. the coefficient of the term of lowest exponent, and the leading term by $\lt(x(t))$.
The degree map $\val(x(t))$ returns the degree of the leading term, and we consider the valuation map $\val\left(x_1(t), \dots, x_n(t)\right) = ~\left(\val(x_1(t)) , \dots, \val(x_n (t))\right)$.
We define $\Cplus = \{ x(t) \in~ \C \mid \lc(x(t)) ~\in~ \RR_{>0} \}$ and  $\Rplus =~ \{ x(t) \in~\R \mid \lc(x(t)) \in \RR_{>0} \}$, which are both convex cones. (Note that this notation differs e.g. from \cite{speyer_tropicaltotallypositive}.)

We write $\T$ for the tropical semiring $(\T,\oplus,\odot) = (\RR\cup\{\infty\},\min, +)$.
Let $I \subseteq \C[x_1,\dots,n_n]$ be an ideal.
The tropicalization $\trop (V(I))$ of the variety $V(I)\subseteq~ \C^n$ is the closure of the set $\SetOf{\val(z)}{z \in V(I) \cap (\C^*)^n}\subseteq  (\RR\cup\{\infty\})^n$.
We consider $\trop(V(I))$ as a polyhedral complex, in which $w,w'$ are contained in the relative interior of the same face if $\initial_w(I)=\initial_{w'}(I)$.
We define the
\emph{positive part} of $\trop(V(I))$ to be $\tropcplus (V(I)) = \trop(V(I) \cap \mathcal C_+^n)$ and the 
\emph{really positive part} to be $\troprplus (V(I)) =~ \trop(V(I) \cap \mathcal R_+^n)$.
Similarly, a point $w \in \trop(V(I))$ is \emph{positive} (respectively \emph{really positive}) if it is contained in the positive part (respectively the really positive part).
For any set $B$ of generators of an ideal $I$ we have
\[
	\trop(V(I)) \subseteq \bigcap_{f \in B} \trop(V(f)),
\]
and so also 
\begin{equation}\label{eq:positive-part-inclusion}
	\tropcplus(V(I)) \subseteq \bigcap_{f \in B} \tropcplus(V(f)).
\end{equation}

We reserve the notation $\tropplus(V(I))$ for the case when $\trop^{+_{\mathcal C}} (V(I))  = \trop^{+_{\mathcal R}} (V(I))$ holds, so that there can be no confusion about the notion of positivity.
The positive part of a tropical variety was characterized by Speyer and Williams as follows:
\begin{prop}[{\cite[Proposition 2.2]{speyer_tropicaltotallypositive}}]\label{proposition:speyer-williams-positivity}
	A point $w$ lies in $\tropcplus(V(I))$ if and only if the initial ideal of $I$ with respect to $w$ does not contain any (non-zero) polynomial whose (non-zero) coefficients all have the same sign, i.e.~if and only if
	\[ \initial_w(I)~\cap~\RR_{>0}[x_1, \dots, x_n]=\inner{0}.\]
\end{prop}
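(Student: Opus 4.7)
The plan is to prove the two directions separately, reducing to a lifting argument in tropical geometry and a real-algebraic positivity statement.

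For the forward direction, pick $z \in V(I) \cap \Cplus^n$ with $\val(z) = w$ and set $c = \lc(z) \in \RR_{>0}^n$. For every $f = \sum_\alpha a_\alpha x^\alpha$ in $I$, the identity $f(z) = 0$ forces the coefficient of the lowest power of $t$ appearing in $f(z)$ to vanish, and unpacking this coefficient yields $\initial_w(f)(c) = 0$. Since $\initial_w(I)$ is generated as an ideal by $\set{\initial_w(f) : f \in I}$, every $g \in \initial_w(I)$ also vanishes at $c$. Any polynomial with all strictly positive real coefficients is strictly positive on $\RR_{>0}^n$ and hence cannot vanish at $c$, so $\initial_w(I) \cap \RR_{>0}[x_1,\dots,x_n] = \inner{0}$.

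For the reverse direction, assume $\initial_w(I) \cap \RR_{>0}[x_1,\dots,x_n] = \inner{0}$. The strategy is in two steps: first produce a common zero $c \in V(\initial_w(I)) \cap \RR_{>0}^n$, then lift $c$ to a Puisseux-series solution in $V(I) \cap \Cplus^n$ with valuation $w$. For the existence of $c$ I would argue by contrapositive via a Positivstellensatz: if $V(\initial_w(I)) \cap \RR_{>0}^n$ were empty, combining a real Nullstellensatz/Positivstellensatz with P\'olya's theorem (every form strictly positive on the standard simplex, after multiplication by a sufficiently large power of $x_1+\cdots+x_n$, has all positive coefficients) would produce a nonzero element of $\initial_w(I)$ with all positive coefficients, contradicting the hypothesis. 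For the lifting step, given $c$, I would build $z_i = c_i t^{w_i} + (\text{higher-order corrections})$ inductively, using at each stage that $c$ kills all initial forms to guarantee that the next correction can be chosen so that the next-lowest $t$-coefficient of $f(z)$ vanishes for every $f$ in a fixed finite Gr\"obner basis of $I$.

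The main obstacle is the Positivstellensatz step. The hypothesis ``no element of $\initial_w(I)$ has all positive coefficients'' is strictly weaker than ``no element is strictly positive on $\RR_{>0}^n$''---for instance $(x-1)^2 + x$ is strictly positive on $\RR_{>0}$ without having all positive coefficients---so the converse of the forward argument does not apply directly. P\'olya's theorem is what bridges the gap, translating ``strictly positive on the positive orthant'' into ``all coefficients positive'' via a multiplicative correction, and is the essential nontrivial ingredient beyond the standard lifting machinery from the fundamental theorem of tropical geometry.
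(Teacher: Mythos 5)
The forward direction is fine. The reverse direction, however, rests on a claim that is false, and the Pólya step is exactly where it breaks. Take $I = \langle (x-y)^2 + 1 \rangle \subseteq \C[x,y]$ and $w = (0,0)$, so that $\initial_w(I) = \langle (x-y)^2 + 1 \rangle \subseteq \CC[x,y]$. This ideal contains no nonzero polynomial with all nonzero coefficients of one sign: for any $g$, the top-degree homogeneous part of $g\cdot((x-y)^2+1)$ is $g_{\mathrm{top}}\,(x-y)^2$, which vanishes on the line $x = y$; a nonzero homogeneous polynomial whose coefficients all share a sign is nonzero on $x=y>0$, so $g_{\mathrm{top}}\,(x-y)^2$ must have coefficients of mixed sign (or be zero, forcing $g=0$). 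Thus the hypothesis $\initial_w(I)\cap\RR_{>0}[x,y]=\inner{0}$ holds. Yet $V(\initial_w(I))\cap\RR_{>0}^2=\emptyset$ — in fact $(x-y)^2+1$ has no real zeros at all — so there is no $c$ for your step 2 to lift. The obstruction is precisely the hypothesis of Pólya's theorem: it requires the homogenization to be strictly positive on the \emph{closed} simplex, but $(x-y)^2+z^2$ vanishes at $(1/2,1/2,0)$. So ``strictly positive on $\RR_{>0}^n$'' does not yield ``positive coefficients after multiplication''; the implication you need for the contrapositive is simply not true.

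What actually saves the proposition is the closure in the definition of $\tropcplus$. In the example above, for any $v<0$ the points $z_1 = t^v + i/2$, $z_2 = t^v - i/2$ lie in $\Cplus$, satisfy $(z_1-z_2)^2+1=0$, and have $\val(z)=(v,v)$; letting $v\to 0^-$ shows $(0,0)\in\tropcplus(V(I))$ even though no $z\in V(I)\cap\Cplus^2$ has $\val(z)=(0,0)$. So the reverse direction cannot be proved by producing a lift sitting exactly over $w$; one has to argue by perturbation (lift at nearby $w'$, or work one Gröbner cone at a time) and use that $\tropcplus(V(I))$ is closed. Your plan has no such mechanism. Separately, even where a positive zero $c$ of the initial ideal does exist, the lifting of $c$ to a Puiseux point with prescribed valuation and prescribed residue is the refined Kapranov/lifting theorem over an algebraically closed valued field; the term-by-term induction you sketch does not establish it, since there is no a priori reason the finitely many initial-form conditions at each stage are simultaneously solvable. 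Note also that the paper does not prove this proposition — it cites Speyer--Williams — so there is no in-paper argument to compare against; the points above concern the internal correctness of your proposal.
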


This proposition implies that the positive part is closed.
There is an equivalent definition of positivity in terms of the tropicalizations of the polynomials in the vanishing ideal of the variety (for ideals in $\R[x_1,\ldots,x_n]$): 
Let $f = \sum_{e \in E^+} f_e x^e - \sum_{f_e \in E^-} f_e x^e \in~ \R[x_1,\dots,x_n]$ be a polynomial such that $f_e \in \Rplus$ for all $e \in E^+ \cup E^-$. (More generally, this definition can be made for polynomials $f\in \C[x_1,\ldots,x_n]$ as long as the coefficients are either in $\Cplus$ or $-\Cplus$.)
The \emph{combinatorially positive part $\tropcombplus(f)$ of the tropical hypersurface} $\trop(V(f))$ is the set of all points $w \in \trop(V(f))$ such that the minimum of $\{ \langle w,e \rangle + \val(f_e) \mid e \in E^+ \cup E^- \}$ is achieved at some $e \in E^+$ and at some $e \in E^-$.
In \cite{speyer_tropicalgrassmannian}, this definition is made for polynomials $f \in \RR[x_1,\dots,x_n]$, in which case $\val(f_e)=0$ for all $e \in E^+ \cup E^-$.
\begin{corollary}\label{cor:hypersurface-combinatorial-positivity}
 For hypersurfaces the positive part coincides with the combinatorially positive part, i.e. $\tropcombplus(f) = \tropcplus(V(f))$ for any polynomial $f\in \R[x_1,\ldots,x_n]$ (or more generally $f\in \C[x_1,\ldots,x_n]$ such that its coefficients are either in $\Cplus$ or $-\Cplus$). 
\end{corollary}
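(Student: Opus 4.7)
The plan is to deduce this corollary directly from \Cref{proposition:speyer-williams-positivity} applied to the principal ideal $(f)$. For a principal ideal the initial ideal is again principal with generator $\initial_w(f)$, so the Speyer--Williams criterion specializes to: $w \in \tropcplus(V(f))$ if and only if $(\initial_w(f)) \cap \RR_{>0}[x_1, \ldots, x_n] = \langle 0 \rangle$. Writing $\mu = \min\{\val(f_e) + \langle w, e\rangle : e \in E^+ \cup E^-\}$ and $E^{\pm}_w = \{e \in E^{\pm} : \val(f_e) + \langle w, e\rangle = \mu\}$, one has
\begin{equation*}
    \initial_w(f) \;=\; \sum_{e \in E^+_w} \lc(f_e)\, x^e \;-\; \sum_{e \in E^-_w} \lc(f_e)\, x^e,
\end{equation*}
and each $\lc(f_e)$ is a strictly positive real by the hypothesis on the coefficients. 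The task then reduces to matching the ideal-theoretic condition with the combinatorial condition on $w$.

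The inclusion $\tropcplus(V(f)) \subseteq \tropcombplus(f)$ is the straightforward half: if $w \in \trop(V(f))$ does not satisfy the combinatorial condition, then without loss of generality $E^-_w = \emptyset$, so $\initial_w(f)$ itself is a non-zero element of $(\initial_w(f)) \cap \RR_{>0}[x_1, \ldots, x_n]$, and positivity of $w$ is excluded by \Cref{proposition:speyer-williams-positivity}. (The case $E^+_w = \emptyset$ is symmetric, witnessed by $-\initial_w(f)$.)

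For the substantive inclusion $\tropcombplus(f) \subseteq \tropcplus(V(f))$, set $p = \sum_{e \in E^+_w} \lc(f_e)\, x^e$ and $q = \sum_{e \in E^-_w} \lc(f_e)\, x^e$, both non-zero polynomials with positive real coefficients. I need to show that the principal ideal $(p-q)$ contains no non-zero all-positive polynomial. The cleanest route is to produce a point $a \in \RR^n_{>0}$ with $p(a) = q(a)$: any element of $(p-q)$ then vanishes at $a$, whereas a non-zero element of $\RR_{>0}[x_1, \ldots, x_n]$ is strictly positive at $a$, giving the desired contradiction. The main obstacle is constructing the point $a$; I would approach it via an intermediate-value argument along monomial curves $s \mapsto (s^{v_1}, \ldots, s^{v_n})$, choosing directions $v \in \RR^n$ that make a monomial from $E^+_w$ and one from $E^-_w$ asymptotically dominant, so that $p/q$ is forced to take values both above and below $1$ and must therefore cross $1$ by continuity on the connected domain $\RR^n_{>0}$.
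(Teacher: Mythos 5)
The inclusion $\tropcplus(V(f)) \subseteq \tropcombplus(f)$ in your proposal is correct and is essentially the argument the paper gives. The substantive inclusion, however, has a gap precisely where you flag it: a point $a \in \RR^n_{>0}$ with $p(a) = q(a)$ need not exist. Your monomial-curve strategy requires some exponent in $E^+_w$ to be linearly separable from $E^-_w$ (and symmetrically), which fails whenever every exponent of $p$ lies in the convex hull of the exponents of $q$. Concretely, take $f = x_1 x_2 - x_1^2 - x_2^2$ and $w = 0$: then $p = x_1 x_2$, $q = x_1^2 + x_2^2$, and $(1,1)$ is the midpoint of $(2,0)$ and $(0,2)$. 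Along every monomial curve $q$ dominates, and in fact by AM--GM $p(a)/q(a) \leq 1/2$ for all $a \in \RR^2_{>0}$, so $p=q$ has no positive solution at all.

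This is not a repairable gap, because the corollary itself fails for this $f$. Since $\initial_0(f)=f$ and $x_1^3 + x_2^3 = -(x_1+x_2)\,\initial_0(f)$ is a nonzero element of $\initial_0(\langle f\rangle)$ with all positive coefficients, \Cref{proposition:speyer-williams-positivity} gives $0 \notin \tropcplus(V(f))$; directly, $a^2 - ab + b^2 = 0$ forces $a/b = e^{\pm i\pi/3}$, so $V(f) \cap \Cplus^2 = \emptyset$. Yet $0 \in \tropcombplus(f)$, since all three exponents attain the minimum at $w=0$ and both $E^+$ and $E^-$ are represented. For comparison, the paper's own one-line proof makes the same unjustified leap at the ``i.e.''\ step: the presence of terms of both signs in $\initial_w(f)$ does not imply $\initial_w(\langle f\rangle) \cap \RR_{>0}[x_1,\ldots,x_n] = \langle 0\rangle$. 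That implication does hold when $\initial_w(f)$ is a binomial, which is the only situation the paper actually exploits downstream (e.g.\ \Cref{cor: positive iff different signs of permutations}), but the corollary as stated is too strong, and your plan correctly surfaces exactly why.
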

\begin{proof}
	By definition, $w \in \tropcombplus(f)$ if and only if the minimum of $\{ \langle w,e \rangle + \val(f_e) \mid e \in~ E^+ \cup E^- \}$ is achieved at some $e^+ \in E^+$ and at some $e^- \in E^-$. Equivalently, the initial form $in_w(f)$ contains the terms $f_{e^+} x^{e^+} - f_{e^-} x^{e^-}$, i.e. $	\initial_w(\langle f \rangle )~\cap~\RR_{>0}[x_1, \dots, x_n]=\inner{0}.$ By \Cref{proposition:speyer-williams-positivity}, this is equivalent to $w \in \tropcplus(V(f))$.
\end{proof}

Let $\mathcal{F}$ be a set of polynomials and $\mathcal P = \bigcap_{f \in \mathcal F} \trop(V(f))$ a tropical prevariety.
The \emph{combinatorially positive part of $\mathcal{P}$ with respect to $\mathcal F$} is $\tropcombplus(\mathcal P) = \bigcap_{f \in \mathcal F} \tropcombplus(f)$.
If $\mathcal P = \trop(V(I))$ is also a tropical variety, then $\tropcplus(V(I)) = \bigcap_{f \in I} \tropcombplus(V(f))$.
In this sense, the notions of positivity and combinatorial positivity agree for tropical varieties.

\subsection{Generators of positivity}

We make the following definitions.

\begin{definition}
	Let $\mathcal F \subseteq \R[x_1,\dots,x_n]$ be a finite set of polynomials.
	Then $\mathcal{F}$ is a set of \emph{positive-tropical generators} (or is a \emph{positive-tropical generating set}) if 
	$$ \trop( V(I) \cap \C_+^n ) = \bigcap_{f \in \mathcal F} \trop(V(f)\cap\C_+^n).$$
	
	It is a set of \emph{really positive-tropical generators} if
	$$ \trop( V(I) \cap \R_+^n ) = \bigcap_{f \in \mathcal F} \trop(V(f)\cap\R_+^n).$$
\end{definition}

We note that a set of positive-tropical generators is conceptually different from a tropical basis. However, under some circumstances a tropical basis is guaranteed to be a set of positive-tropical generators.

\begin{theorem} \label{thm:hypersurfaces-positive-bases}
	If $\trop(V(f))$ is a tropical hypersurface, then $f$ is a positive-tropical generator and a really positive-tropical generator for any $f\in \R[x_1,\ldots,x_n]$.
\end{theorem}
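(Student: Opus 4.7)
My plan is brief because the statement reduces to an unpacking of definitions. When $\mathcal{F} = \{f\}$ consists of a single polynomial, the ideal $I$ it generates is $\langle f \rangle$, and consequently $V(I) = V(f)$ as subsets of $\C^n$. Since the paper's definition of $\trop(V(I))$ depends only on the variety $V(I)$ (specifically, on $V(I) \cap (\C^*)^n$) rather than on a particular ideal presentation, I would observe that both sides of the defining equation for a positive-tropical generator,
$$\trop(V(I) \cap \C_+^n) \quad \text{and} \quad \bigcap_{g \in \mathcal{F}} \trop(V(g) \cap \C_+^n),$$
collapse to the common set $\trop(V(f) \cap \C_+^n)$, so the equality is immediate. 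Replacing $\C_+$ by $\R_+$ throughout and repeating the argument verbatim yields the really positive statement.

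I do not anticipate any genuine obstacle. The only subtlety worth checking is that tropicalization in this paper is variety-theoretic rather than ideal-theoretic, which is made explicit at the start of \Cref{sec:notions-of-positivity}. The value of recording this theorem in isolation is presumably organizational: it authorizes us, whenever a prevariety is presented as an intersection of hypersurfaces, to check positivity one hypersurface at a time. Combined with \Cref{cor:hypersurface-combinatorial-positivity}, this reduces the test of membership in $\bigcap_{f \in \mathcal{F}} \trop(V(f) \cap \C_+^n)$ to the purely combinatorial condition that on each $f \in \mathcal{F}$, the tropical minimum of $\{\langle w,e\rangle + \val(f_e)\}$ is attained simultaneously by a positive-sign monomial and a negative-sign monomial — which is exactly the criterion the rest of the paper will leverage when analyzing determinantal prevarieties.
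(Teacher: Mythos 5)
Your proof is correct and takes essentially the same approach as the paper, which simply records that the statement follows directly from the definitions; you spell out the tautology that both sides of the defining equality collapse to $\trop(V(f)\cap\C_+^n)$ (respectively $\trop(V(f)\cap\R_+^n)$) when $\mathcal{F}=\{f\}$ and $I=\langle f\rangle$.
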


\begin{proof}
	This follows directly from the definition of (really) positive-tropical generators.
\end{proof}

\begin{theorem}
	For a binomial ideal, every tropical basis containing a reduced Gr\"obner basis (with respect to any ordering) forms a set of positive-tropical generators.
\end{theorem}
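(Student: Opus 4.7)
The plan is to verify the nontrivial inclusion $\bigcap_{f \in B} \tropcplus(V(f)) \subseteq \tropcplus(V(I))$; the other containment is immediate from $V(I) \subseteq V(f)$. Two structural facts form the foundation. First, by \Cref{cor:hypersurface-combinatorial-positivity}, any binomial $g = a x^\alpha + b x^\beta \in \R[x_1,\dots,x_n]$ satisfies $\tropcplus(V(g)) = \trop(V(g))$ when $\lc(a)$ and $\lc(b)$ have opposite signs, and $\tropcplus(V(g)) = \emptyset$ when they have the same sign. Second, the reduced Gr\"obner basis $G$ of a binomial ideal consists of binomials; this is a standard consequence of Buchberger's algorithm, since $S$-polynomial formation and reduction preserve the binomial property.

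Fix $w \in \bigcap_{f \in B} \tropcplus(V(f))$. Since $B$ is a tropical basis, $w \in \trop(V(I))$. For each $g \in G \subseteq B$, the assumption $w \in \tropcplus(V(g))$ rules out $\tropcplus(V(g)) = \emptyset$, so every element of $G$ has its two terms of opposite sign. Writing each such binomial as $g = a x^\alpha - b x^\beta$ with $a, b \in \Rplus$ and setting $\gamma_g := \alpha - \beta$ and $c_g := b/a \in \Rplus$, the intersection inside the torus becomes
\[
V(I) \cap (\C^*)^n \;=\; \{x \in (\C^*)^n : x^{\gamma_g} = c_g \text{ for every } g \in G\},
\]
a coset of a subtorus of $(\C^*)^n$.

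It remains to produce a lift $x' \in V(I) \cap \Cplus^n$ with $\val(x') = w$. Starting from any lift $x \in V(I) \cap (\C^*)^n$ above $w$, the leading coefficient vector $\lc(x) \in (\C^*)^n$ satisfies $\lc(x)^{\gamma_g} = \lc(c_g) > 0$. The crux is to find $\xi \in \RR_{>0}^n$ solving $\xi^{\gamma_g} = \lc(c_g)$ for every $g$: taking logarithms turns this into a linear system over $\RR$, solvable iff $\prod_g \lc(c_g)^{\lambda_g} = 1$ for every integer relation $\sum_g \lambda_g \gamma_g = 0$. This compatibility is inherited from the Puiseux system, since $\sum_g \lambda_g \gamma_g = 0$ forces $\prod_g c_g^{\lambda_g} = 1$ in $\R$ and $\lc$ is multiplicative with no cancellation on $\Rplus$. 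Given such $\xi$, set $\eta := \xi/\lc(x) \in (\C^*)^n$; then $\eta^{\gamma_g} = 1$, so $x' := \eta \cdot x$ (coordinatewise) lies in $V(I) \cap (\C^*)^n$ with $\val(x') = w$ and $\lc(x') = \xi \in \RR_{>0}^n$, hence $x' \in \Cplus^n$ and $w \in \tropcplus(V(I))$. The main obstacle is exactly this lattice/character bookkeeping that transfers Puiseux-level compatibility to the positive-real level and produces a genuine positive lift; the remaining pieces of the argument are formal consequences of the binomial dichotomy and the tropical basis hypothesis.
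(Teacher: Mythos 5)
Your proof is correct but takes a genuinely different route from the paper's. The paper's argument is Gr\"obner-theoretic: for $w\in\trop(V(I))$ the initial forms $\initial_w(g)$, $g\in G$, cannot be monomials and hence equal $g$, so $\initial_w(I)=I$ and $G$ remains a reduced Gr\"obner basis of $\initial_w(I)$; then \Cref{proposition:speyer-williams-positivity} together with a cited lemma on binomial ideals reduces $w\in\tropcplus(V(I))$ to the sign condition $G\cap\RR_{\geq 0}[x]=\emptyset$, which is exactly what membership in $\bigcap_{g\in G}\tropcplus(V(g))$ detects. You instead construct a positive lift explicitly: describe $V(I)\cap(\C^*)^n$ as a torus coset cut out by the characters $x^{\gamma_g}=c_g$, take any Puiseux lift $x$ of $w$, and twist by $\eta=\xi/\lc(x)$ where $\xi\in\RR_{>0}^n$ solves the real character system $\xi^{\gamma_g}=\lc(c_g)$, with solvability inherited from the Puiseux-level compatibility via multiplicativity of $\lc$. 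Your route is more elementary and self-contained --- it avoids the external lemma and produces an actual positive witness --- at the cost of the lattice/linear-algebra bookkeeping; conceptually it amounts to observing that the positive part of a torus coset is nonempty over $w$ exactly when the binomial sign pattern permits it, which is the same underlying fact the paper accesses through initial ideals.
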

\begin{proof}
	Let $I$ be a binomial ideal with tropical basis $B$.
	By assumption, $B$ contains a reduced Gr\"obner basis $G$. 
	This Gr\"obner basis $G$ consists solely of binomials by~\cite[Proposition 1.1]{EisenbudSturmfels:1996}.
	Let $w \in \trop(V(I))$. Then $\initial_w(I) = \langle \initial_w(f) \mid f \in G \rangle$ does not contain a monomial and so $\initial_w(f) = f$. Thus, $I = \initial_w(I)$ and $G$ is a reduced Gröbner basis of $\initial_w(I)$. 
	Now, \Cref{proposition:speyer-williams-positivity} together with \cite[Lemma 5.6]{bendle_parallelcomputationtropical} implies that $w \in \tropcplus(V(I))$ if and only if $\initial_w(I) \cap \RR_{\geq 0}[x] = \langle 0 \rangle$ if and only if $G \cap \RR_{\geq 0}[x] = \emptyset$.
	Therefore, $w \in  \tropcplus(V(I)) = \bigcap_{f \in I} \tropcplus(V(f))$ if and only if $w \in \bigcap_{f \in G} \tropcplus(V(f))$. Note that 
	\[
		\bigcap_{f \in I} \tropcplus(V(f)) \subseteq \bigcap_{f \in B} \tropcplus(V(f)) \subseteq \bigcap_{f \in G} \tropcplus(V(f)),
	\]
	and so $w \in \tropcplus(V(I))$ if and only if $w \in \bigcap_{f \in B} \tropcplus(V(f))$.
\end{proof}

\begin{example}[The totally positive tropical Grassmannian]\label{th:grassmannian-positive-generators}
	Positive-tropical generators have been studied in the case of the tropical Grassmannian. More precisely, it was shown that the $3$-term Plücker relations are not a tropical basis, but they are indeed a positive-tropical generating set \cite{SpeyerWilliams:2021,ArkaniHamedLamSpradlin:2021}. 
	 It is also known that the positive part and the really positive part agree \cite{speyer_tropicaltotallypositive}.
	Hence, the $3$-term Plücker relations also form a really positive-tropical generating set.
\end{example}

If the positive part and the really positive part of a tropical variety coincide, then every set of positive-tropical generators is also a set of really positive-tropical generators, because

\begin{table}[h]
	\centering
\begin{tabular}{ccc}
	$\tropcplus V(I)$  & $=$ &  $ \displaystyle\bigcap_{f \in \mathcal F} \tropcplus(V(f))$ \\
	\rotatebox[origin=c]{90}{$=$}			&      & \rotatebox[origin=c]{90}{$\subseteq$} \\
	$\troprplus V(I)$  &  $\subseteq$  & $\displaystyle\bigcap_{f \in \mathcal F}  \troprplus(V(f))$.
\end{tabular}
\end{table}

\begin{remark}\label{remark:tabera-1}
  In \cite{tabera15_realtropicalbases} the notion of a real tropical basis was introduced. This is a finite generating set, which cuts out the signed tropicalization of the real part of the variety. In particular, a real tropical basis is always a set of really positive-tropical generators.
  We elaborate on this further in \Cref{remark:tabera-signed-generators}.
\end{remark}

For our notion of positive-tropical generators,
\Cref{th:grassmannian-positive-generators} and \Cref{table:positive-generators-overview} indicate that tropical bases and positive-tropical generators are distinct concepts of similar flavor. This motivates the following question.

\begin{question}\label{question:tropical-basis-positive-generators}
	Is there a tropical variety where a tropical basis is not a set of positive-tropical generators?
\end{question}

In particular, we raise this question for tropical determinantal varieties, which will be the main object of study in the following sections. 
Given \Cref{table:positive-generators-overview}, does this already fail for the minors?

\subsection{Signed-tropical generators}\label{sec:signed-tropical-generators}

We devote the remainder of this section to discuss how a description of a tropical hypersurface $\trop(V(f))$ for one orthant (the positive orthant) can be extended to all orthants by `flipping signs'.
This goes back to the idea of Viro's patchworking~\cite{Viro:2006}.
It is crucial that for a hypersurface, the notions of positivity and combinatorial positivity coincide (cf. \Cref{cor:hypersurface-combinatorial-positivity}). 
More precisely,  let $f = \sum_\alpha c_\alpha x^\alpha$ be a polynomial in $n$ variables $x = (x_1,\dots, x_n)$ and  $\sign\in~ \{-1,1\}^n$ a sign vector.
Analogously to the notions introduced in \Cref{sec:notions-of-positivity} we define 
\[
\C^\sign = \SetOf{ (\xi_1(t), \dots, \xi_n(t)) \in \C^n}{\lc(\xi_i) \in \RR \text{ and } \sgn(\lc(\xi_i)) = \sign_i \ \forall i \in [n] }
\]
and $\tropsigned(V(f)) = \trop(V(f) \cap \C^\sign)$.
We consider the modified polynomial $f^\sign = \sum_\alpha \left(\prod s_i^{\alpha_i}\right) c_\alpha x^\alpha$.
By construction, for a point $\xi = (s_1 \xi_1, \dots, s_n \xi_n), \xi_i \in \Cplus$ one obtains
\begin{equation*}
	f(\xi) = \sum_\alpha c_\alpha \xi^\alpha =  \sum_\alpha \left(\prod s_i^{\alpha_i}\right) c_\alpha (\xi_1, \dots, \xi_n)^\alpha = f^\sign(\xi_1, \dots, \xi_n).
\end{equation*}
In other words, $\xi \in V(f) \cap \C^\sign$ if and only if $(\xi_1, \dots, \xi_n) \in V(f^\sign) \cap \Cplus^n $ and hence
\begin{equation*}
	 \tropsigned(V(f)) = \tropcplus(V(f^\sign))  .
\end{equation*}

We can extend this idea to make the following definitions.
\begin{definition}
  Let $V(I)\subseteq \C^n$ be a variety and $\mathcal F \subseteq \R[x_1,\dots x_n]$ be a finite set of polynomials.
  Let $s\in \{-1,1\}^n$ be a fixed sign vector.
  The set $\mathcal F$ is a set of \emph{signed-tropical generators} of $\trop(V(I))$ with respect to $s$ if 
	\[
	\trop(V(I) \cap \C^\sign) = \bigcap_{f \in \mathcal F} \trop(V(f)\cap \C^\sign) = \bigcap_{f \in \mathcal F} \trop^\sign(V(f)).
	\]
	The finite set  $\mathcal F \subseteq \R[x_1,\dots x_n]$ is a set of \emph{really signed-tropical generators} of $\trop(V(I))$ with respect to $s$ if 
	\[
	\trop(V(I) \cap \R^\sign) = \bigcap_{f \in \mathcal F} \trop(V(f)\cap \R^\sign).
	\]
\end{definition}

\begin{prop}\label{prop:singed-tropical-generators-hypersurface}
	If $\trop(V(f))$ is a tropical hypersurface, then $f$ is a (really) signed-tropical generator for $\trop^\sign(V(f)$ with respect to every sign vector $\sign \in \{-1,1\}^n$ for any polynomial $f\in\R[x_1,\dots,x_n]$. 
	Furthermore, if $f = \sum_{\alpha} c_\alpha x^\alpha$, then $\trop^\sign(V(f)) = \tropcplus(V(f^\sign))$, where 
	$f^\sign = \sum_\alpha \left(\prod s_i^{\alpha_i}\right) c_\alpha x^\alpha$.
\end{prop}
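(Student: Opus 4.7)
The plan is to handle two assertions separately: first, that $\{f\}$ is a (really) signed-tropical generator of $\trop(V(f))$ for every sign vector $\sign \in \{-1,1\}^n$; and second, the explicit identity $\tropsigned(V(f)) = \tropcplus(V(f^\sign))$.

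For the first assertion, I would simply observe that for a tropical hypersurface, the singleton $\mathcal F = \{f\}$ makes the intersection in the defining equality collapse:
\[
	\bigcap_{g \in \mathcal F} \trop(V(g)\cap\C^\sign) \;=\; \trop(V(f) \cap \C^\sign) \;=\; \tropsigned(V(f)),
\]
so the equality defining a signed-tropical generator holds tautologically. The really signed case is verified in the same way, with $\C$ replaced by $\R$ throughout.

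For the second assertion, I would follow the sign-twist calculation sketched immediately before the proposition. Define $\phi_\sign : (\xi_1,\dots,\xi_n) \mapsto (\sign_1 \xi_1, \dots, \sign_n \xi_n)$. Since $\lc(\sign_i \xi_i) = \sign_i \lc(\xi_i)$ and $\sign_i^2 = 1$, the map $\phi_\sign$ restricts to an involutive bijection $\Cplus^n \to \C^\sign$. The key identity is
\[
	f(\phi_\sign(\xi)) \;=\; \sum_\alpha c_\alpha \prod_i (\sign_i \xi_i)^{\alpha_i} \;=\; \sum_\alpha \Bigl(\prod_i \sign_i^{\alpha_i}\Bigr) c_\alpha \xi^\alpha \;=\; f^\sign(\xi),
\]
so $\phi_\sign$ identifies $V(f^\sign) \cap \Cplus^n$ with $V(f) \cap \C^\sign$. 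Because $\val(\sign_i \xi_i) = \val(\xi_i)$ (multiplying by $\pm 1$ does not shift the degree of the leading term), applying the valuation map componentwise yields $\tropsigned(V(f)) = \tropcplus(V(f^\sign))$. The really signed statement follows from the verbatim argument with $\R$ and $\Rplus$ in place of $\C$ and $\Cplus$.

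The main obstacle is essentially nonexistent: the proposition is a direct formalisation of the computation displayed just before it in \Cref{sec:signed-tropical-generators}. The only care needed is to check that the sign-twist $\phi_\sign$ preserves the cone defining the relevant orthant (which is immediate from $\sign_i^2 = 1$) and that it commutes with $\val$ (immediate, since $\sign_i \in \{\pm 1\}$ has valuation zero and does not affect the leading exponent).
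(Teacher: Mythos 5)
Your proof is correct and follows essentially the same route as the paper: the first claim is observed to be tautological from the definition (singleton $\mathcal F$), and the second is exactly the sign-twist computation presented in the text immediately preceding the proposition, which you have merely written out with the explicit bijection $\phi_\sign$ and the (immediate) compatibility with $\val$.
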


\begin{proof}
	The first part of the statement follows directly from the definition of (really) signed-tropical generators. The second part is implied by the discussion above.
\end{proof}

Let $P$ denote the tropical prevariety $P = \bigcap_{f \in \mathcal F} \trop(V(f))$ with finite generating set $\mathcal F$. Note that, similarly to the positive part, also for the more general signed part we have 
$$\trop(V(I) \cap \C^\sign) \subseteq \bigcap_{f \in \mathcal F} \trop^\sign(V(f)) \subseteq P.$$ 
In this sense, one can interpret the ``signed-tropical prevariety'' $\bigcap_{f \in \mathcal F} \trop^\sign(V(f))$ as a combinatorial approximation of the signed tropicalization $\trop(V(I) \cap \C^\sign)$. 
Thus, when considering signed tropicalizations, a finite set $\mathcal F$ that is a signed-tropical generating set with respect to every sign pattern $\sign \in \{-1,1\}^n$ simultaneously might be a useful tool for understanding the different orthants $\trop(V(I) \cap \C^\sign)$ in a combinatorial fashion.
Note that a set of positive-tropical generators does not necessarily has to be a signed-tropical generating set for other orthants, as illustrated in the following example.

	\begin{example}[Positive generators do not generate all orthants]\label{ex:tropical-linear-space}
		Consider the tropicalization of the linear space that is the row span of $M$ with Plücker vector $p$ given by
		\[
		M = \ma 
		1 & 0 & -1 & 1 \\
		0 & 1 & -1 & -2
		\trix, \
		\begin{array}{rccccccl}
			&12 & 13 & 14  & 23 & 24 & 34 & \\
			p = ( &  1 &  -1 &  -2  &   1 &  -1 &  3 & ).
		\end{array}
		\]
		The linear space $L$ is given by the polynomials
		\begin{align*}
			f_1 = p_{12} x_3 - p_{13} x_2 + p_{23} x_1 &= x_3 + x_2 + x_1 = 0 \\
			f_2 = p_{13} x_4 - p_{14} x_3 + p_{34} x_1 &= -x_4 + 2x_3 + 3x_1 = 0 \\
			f_3 = p_{12} x_4 - p_{14} x_2 + p_{24} x_1 &= x_4 + 2x_2 - x_1 = 0 \\
			f_4 = p_{23} x_4 - p_{24} x_3 + p_{34} x_2 &= x_4 + x_3 + 3x_2 = 0
		\end{align*}
		and $\tropcplus(L) \subseteq \bigcap_{i=1}^4 \tropcplus(V(f_i))$.
                Note that $\tropcplus(V(f_1)) = \emptyset$, so $\tropcplus(L) = \emptyset$ and $\{f_1\}$ is a positive-tropical generating set. 
		Let $s = (-1,1,1,1)$.
                Then $\tropsigned(L) \subseteq \bigcap_{i=1}^4 \tropsigned(V(f_i))$.
                Since $f^s_3$ has only positive coefficients, it follows that $\tropsigned(V(f_3)) = \emptyset$, and so  $\tropsigned(L) = \emptyset$.
                However, $\tropsigned(V(f_1))$ is non-empty, so $\{f_1\}$ is not a signed-tropical generating set with respect to $s$.
	\end{example}

	\begin{remark}\label{remark:tabera-signed-generators}
		As mentioned in \Cref{remark:tabera-1}, a real tropical basis \cite{tabera15_realtropicalbases} cuts out a signed version of the tropicalization of the real part of the variety.
		By definition, a real tropical basis is a set of 
		really signed-tropical generators with respect to every sign pattern $\sign \in \{-1,1\}^n$ simultaneously.
		We note however, that the converse is not true. For example, there are hypersurfaces for which there exists no real tropical basis \cite[Example 3.15]{tabera15_realtropicalbases}. On the other hand, by \Cref{prop:singed-tropical-generators-hypersurface} the defining polynomial of a hypersurface is always a really signed-tropical generator for every orthant.
	\end{remark}

\subsection{Determinantal varieties}\label{sec:determantal-varieties}

We set the stage for the following sections by introducing the determinantal varieties we will consider. 
Let $I_r \subseteq \C[x_{ij} \mid ij \in [d]\times[n]]$ be the ideal generated by all $(r+1) \times (r+1)$-minors of a $(d\times n)$-matrix.
The determinantal variety $V(I_r) \subseteq \C^{d\times n}$ consists of all $(d\times n)$-matrices of rank at most $r$.
As in the case of the Grassmannian, also for the tropicalization of  determinantal varieties the positive part and the really positive part coincide.
The proof of the next statement is deferred to \Cref{sec:appendix-proof-positivity}.

\begin{prop} \label{prop:mixed-to-real-lifts}
	Let $A \in \C^{d \times n}$ be a matrix such that the leading coefficient of every entry $A_{ij}\in \C$ is real.
	Then there exists a matrix $B \in \mathcal \mathcal \R^{d\times n}$ of real Puisseux series that has the same rank as $A$ and the Puisseux series in every entry has the same leading term, meaning that $\lt(A_{ij}) = \lt(B_{ij})$ holds for all $(i,j)\in [d]\times[n]$.
\end{prop}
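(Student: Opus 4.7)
My plan is to construct $B$ via a Schur-complement-style rank-$r$ completion of a carefully perturbed real part of $A$.

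First, I would decompose $A = A_0 + i A_1$ with $A_0, A_1 \in \R^{d\times n}$. The hypothesis that $\lc(A_{ij})\in\RR$ for every entry is equivalent to the condition that $\val(A_{1,ij}) > \val(A_{0,ij})$ whenever $A_{ij}\neq 0$ (and $A_{1,ij}=0$ when $A_{ij}=0$), which directly yields $\lt(A_{0,ij}) = \lt(A_{ij})$ entry-wise. In particular, $A_0$ is a real matrix with the right leading terms, and the only remaining problem is to arrange the rank: if $\rk(A_0) = \rk(A)$, we are done with $B := A_0$.

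Otherwise, let $r = \rk(A)$, pick $I\subseteq[d]$ and $J\subseteq[n]$ of size $r$ with $\det(A_{IJ})\neq 0$, and write (after permuting rows and columns)
\[
A = \begin{pmatrix} P & Q \\ R & R P^{-1} Q \end{pmatrix},\qquad P = A_{IJ} \text{ invertible}.
\]
I would then seek $B$ of the matching block form
\[
B = \begin{pmatrix} \tilde P & \tilde Q \\ \tilde R & \tilde R \tilde P^{-1}\tilde Q \end{pmatrix}
\]
with real $\tilde P, \tilde Q, \tilde R$, $\tilde P$ invertible, and leading terms matching those of $P, Q, R$. The block structure forces $\rk(B) = r$ for free, and the leading-term condition $\lt(B_{ij}) = \lt(A_{ij})$ is satisfied on the top three blocks by construction.

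The only remaining conditions are $\lt((\tilde R \tilde P^{-1}\tilde Q)_{ij}) = \lt(A_{ij})$ on the bottom-right block. I would start from the candidate $(\tilde P, \tilde Q, \tilde R) = (\mathrm{Re}(P), \mathrm{Re}(Q), \mathrm{Re}(R))$, making $\tilde P$ invertible by a generic real higher-order perturbation if needed (always possible since invertibility is codimension $1$ in the affine space of real matrices with prescribed leading terms, even when $\mathrm{Re}(P)$ is singular due to tropical cancellation in $\det P$). Then, valuation level by valuation level, I would adjust higher-order coefficients of $\tilde P, \tilde Q, \tilde R$ so that the Schur complement block $\tilde R \tilde P^{-1}\tilde Q$ matches $S = R P^{-1} Q$ entry-by-entry in leading terms, solving at each step an $\RR$-linear system on the next batch of coefficients whose consistency is witnessed by the corresponding truncation of the original complex matrix $A$.

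The main obstacle will be this last step: writing down and solving the level-by-level $\RR$-linear systems that arrange the Schur complement block to have the prescribed leading terms. The delicate point is that tropical cancellations in $\det P$ (and in cofactors) can produce unexpected valuation drops in $P^{-1}$, which then propagate through the matrix products computing the Schur complement; so the ``obvious'' choice $(\tilde P,\tilde Q,\tilde R)=(\mathrm{Re}(P),\mathrm{Re}(Q),\mathrm{Re}(R))$ need not already match $\lt(S)$. Solvability at each level ultimately follows from the general principle that all defining equations have real coefficients (they come from the integer-coefficient $(r+1)\times(r+1)$-minor relations together with the leading-term equalities), so real parts of complex solutions to them are themselves real solutions.
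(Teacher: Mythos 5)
Your approach is genuinely different from the paper's, and it has a gap at exactly the step you flag as the ``main obstacle.''

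The paper proceeds via rows rather than blocks: it selects $r$ linearly independent rows of $A$, makes those real with matching leading terms (this is the full-rank base case, handled by adding generic real higher-order perturbations), writes each remaining row as a $\C$-linear combination $A_i = \sum_{k=1}^r c_k^i A_k$, and replaces each $c_k^i$ by its real part $(c_k^i + \overline{c_k^i})/2$. The point is that the leading-term condition then becomes a \emph{linear} statement: since the hypothesis forces $\sum_{k \in S} b_k^i a_{kj}$ (the relevant subsum of leading coefficients) to be real, taking real parts of the $b_k^i$ leaves this sum unchanged. No iterative lifting is needed.

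Your Schur-complement route is structurally attractive because it enforces the rank automatically, but the price is that matching $\lt\bigl(\tilde R \tilde P^{-1} \tilde Q\bigr)$ to $\lt(RP^{-1}Q)$ is a \emph{nonlinear} condition on $(\tilde P, \tilde Q, \tilde R)$, and your proposed justification for solvability does not hold. You invoke the principle that ``all defining equations have real coefficients, so real parts of complex solutions to them are themselves real solutions.'' This is false for nonlinear equations with real coefficients: for instance $x^2 + y^2 = 0$ has the complex solution $(1, i)$, but its real part $(1, 0)$ is not a solution. The $(r+1)\times(r+1)$-minor relations you cite are of degree $r+1 \geq 2$, so this principle cannot be used, and the level-by-level linear system you would need to solve has no guaranteed consistency witness — the complex matrix $A$ witnesses consistency over $\C$, but that does not descend to $\RR$ via taking real parts. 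You also mention perturbing $\tilde P$ to regain invertibility, but such a perturbation changes the Schur block, so you would have to re-verify the leading-term condition after perturbing, which compounds the difficulty. To salvage your approach you would essentially have to re-derive the paper's row-based argument (or an equivalent linearization) inside the step you leave as a sketch; as written, the proposal does not constitute a proof.
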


\begin{corollary} \label{cor:really-positive-part-minors}
	The positive and the really positive part of the tropicalization of the variety $V(I_r) \subseteq \C^{d \times n}$ of $(d\times n)$-matrices of rank at most $r$ coincide: 
	\[ \trop^{+_{\mathcal C}} (V(I_r))  = \trop^{+_{\mathcal R}} (V(I_r)).\]
	In particular, every set of positive-tropical generators for the ideal $I_r$ of $(r+1)\times(r+1)$-minors is a set of really positive-tropical generators.\qed
\end{corollary}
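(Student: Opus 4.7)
The plan is to split the proof into two parts: first establish the equality $\tropcplus(V(I_r)) = \troprplus(V(I_r))$ using \Cref{prop:mixed-to-real-lifts}, and then derive the statement about generators from the commutative inclusion diagram exhibited earlier in the paper. The inclusion $\troprplus(V(I_r)) \subseteq \tropcplus(V(I_r))$ is immediate from $\Rplus \subseteq \Cplus$, so the real content is the reverse inclusion.

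For that, I would unpack the definition: $\tropcplus(V(I_r))$ is the closure of $\{\val(A) \mid A \in V(I_r) \cap \Cplus^{d\times n}\}$ (observing that $\Cplus \subseteq \C^*$), and $\troprplus(V(I_r))$ is closed (by \Cref{proposition:speyer-williams-positivity}, or simply by being defined as a closure). Hence it suffices to show that every valuation $\val(A)$ with $A \in V(I_r) \cap \Cplus^{d\times n}$ already lies in $\troprplus(V(I_r))$. Given such $A$, each entry has leading coefficient in $\RR_{>0}$, which is in particular real, so \Cref{prop:mixed-to-real-lifts} applies and produces $B \in \R^{d\times n}$ with $\rk(B) = \rk(A) \leq r$ and $\lt(B_{ij}) = \lt(A_{ij})$ for all $(i,j)$. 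Matching leading terms forces $\lc(B_{ij}) = \lc(A_{ij}) \in \RR_{>0}$, so $B \in \Rplus^{d\times n} \cap V(I_r)$, while $\val(B) = \val(A)$ entrywise. Therefore $\val(A) \in \troprplus(V(I_r))$, and the result follows by taking closures.

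For the second half, if $\mathcal F$ is a set of positive-tropical generators for $I_r$, then combining the equality just proved with the inclusion table from the paper yields
$$\troprplus(V(I_r)) \;\subseteq\; \bigcap_{f \in \mathcal F} \troprplus(V(f)) \;\subseteq\; \bigcap_{f \in \mathcal F} \tropcplus(V(f)) \;=\; \tropcplus(V(I_r)) \;=\; \troprplus(V(I_r)),$$
where the final equality invokes the first half. This forces equality throughout and, in particular, shows $\mathcal F$ is a really positive-tropical generating set.

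The entire technical substance is packed into \Cref{prop:mixed-to-real-lifts}; once that lift from mixed-coefficient to real Puiseux-series matrices is available, the remaining argument is purely bookkeeping about leading terms and valuations preserving the rank condition. The main obstacle therefore lies not in this corollary but in the appendix's proof of \Cref{prop:mixed-to-real-lifts}, which must modify the non-real tails of each Puiseux entry of $A$ without disturbing its rank.
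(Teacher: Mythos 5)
Your proof is correct and follows exactly the route the paper intends: apply \Cref{prop:mixed-to-real-lifts} entrywise to a matrix in $V(I_r)\cap\Cplus^{d\times n}$ to obtain a real lift with the same leading terms (hence same positivity and same valuation), conclude that the two positive parts coincide, and then read off the generator statement from the inclusion diagram preceding \Cref{remark:tabera-1}. The paper treats this as immediate from the proposition (hence the \qed); you have simply filled in the bookkeeping carefully and correctly.
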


We denote by $\tropdet$ the tropicalization of the determinantal variety of $(d\times n)$-matrices of rank at most $r$.
Since the minors of a matrix are polynomials with constant coefficients, the tropical determinantal variety $\tropdet$ is a polyhedral fan, and its positive part is a closed subfan. 
By \Cref{cor:really-positive-part-minors} the positive part is independent of the choice between $\C$ and $\R$, hence we denote it by $\tropplus(V(I_r)) = \tropdetplus$.
While the ideal $I_r$ is generated by the $(r+1)\times(r+1)$-minors, this does not necessarily carry over to the tropical variety $\tropdet$. 
Indeed, in a sequence of works, it has been characterized when they actually form a tropical basis.

\begin{theorem}[\cite{develin_ranktropicalmatrix, chan_4x4minors,shitov_whendor}]\label{thm:shitov}
	The $(r \times r)$-minors of a $(d \times n)$-matrix of variables form a tropical basis of the ideal they generate if and only if $r \leq 3$, or $r = \min\{d, n\}$, or else $r = 4$ and $\min\{d, n\} \leq 6$.
\end{theorem}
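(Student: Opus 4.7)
The plan is to recognize that this statement is not a single theorem but a classification that combines positive results (the minors do form a tropical basis) with explicit negative examples (they don't), and to assemble it by dividing into cases according to the parameters $r$, $d$, $n$. First I would handle the two easy endpoints. For $r=2$, the vanishing of all $2\times 2$ tropical minors is equivalent to the matrix being a tropical outer sum, i.e.\ having Kapranov rank at most $1$, so there is nothing to check. For $r = \min\{d,n\}$, one is either in the square case, where the ideal is principal (the determinant) and a single polynomial is trivially a tropical basis of the hypersurface it generates, or in the maximal-minor case $r=d<n$, where one invokes the known fact that the maximal minors of a generic matrix form a tropical basis (classically tied to the tropical Stiefel image and the tropical Grassmannian).

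For $r=3$, the plan is to apply the results of Develin--Santos--Sturmfels \cite{develin_ranktropicalmatrix} on the coincidence of Kapranov and tropical rank in low-rank regimes: a matrix has all $3\times 3$ tropical minors vanishing iff its tropical rank is at most $2$ iff its Kapranov rank is at most $2$, and this chain of equivalences is precisely what it means for the $3\times 3$ minors to be a tropical basis of $I_2$.

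For $r=4$, I would split according to $\min\{d,n\}$. The positive side $\min\{d,n\}\le 6$ is covered by Chan's analysis \cite{chan_4x4minors}, whose strategy is a finite (though intricate) case analysis of combinatorial types of $(d\times n)$-matrices with $d,n\le 6$ having all $4\times 4$ tropical minors vanishing, showing that each such type does lift to Kapranov rank $\le 3$. For $\min\{d,n\}\ge 7$, and for all $r\ge 5$, one cites Shitov's counterexamples \cite{shitov_whendor}, which explicitly exhibit a tropical matrix all of whose $r\times r$ minors vanish tropically but whose Kapranov rank strictly exceeds $r-1$; the same small witnesses can then be padded with generic rows or columns to produce counterexamples in every larger shape.

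The main obstacle in a self-contained proof is unquestionably Shitov's negative direction: ruling out a tropical basis requires constructing an explicit matrix whose tropical minor vanishing is combinatorially forced but which is not approximable by complex rank-$(r-1)$ matrices. These constructions are combinatorially delicate and not mechanical; Chan's positive $4\times 4$ result in the bounded range is a close second in difficulty, since a priori the exhaustive combinatorial enumeration one would attempt is infeasible without clever reductions. By contrast, the $r\le 3$ and $r=\min\{d,n\}$ cases are essentially structural and fall out of the tropical rank framework together with the hypersurface observation of \Cref{thm:hypersurfaces-positive-bases}.
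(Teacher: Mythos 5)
The paper does not prove this theorem: it is stated as a black-box classification with citations to Develin--Santos--Sturmfels, Chan, and Shitov, so there is no internal argument to compare your plan against. What you have written is a reasonable road map for how one would assemble the proof from those three papers, and your case decomposition correctly mirrors how the classification is distributed across the literature: DSS for $r\le 3$ and the square case, Chan for $r=4$ with $\min\{d,n\}\le 6$, and Shitov for the negative direction ($r=4$, $\min\{d,n\}\ge 7$, and all $5\le r<\min\{d,n\}$) as well as the remaining positive pieces.

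Two spots in your sketch are loose enough to flag. First, the maximal-minors case $r=\min\{d,n\}=d<n$ is not ``trivially'' inherited from the tropical Grassmannian or Stiefel image; showing that tropical singularity of all maximal minors implies the existence of a rank-deficient lift is a genuine theorem in its own right, and invoking the Grassmannian as a side remark understates it. Second, the claim that Shitov's counterexamples ``can be padded with generic rows or columns to produce counterexamples in every larger shape'' is plausible but not automatic: padding preserves tropical rank straightforwardly, but one must also verify that the Kapranov rank of the padded matrix still exceeds $r-1$, which requires an argument (Shitov's paper handles the monotonicity in $d,n$ with care). Neither point invalidates your plan, but both would need to be filled in for a self-contained proof.
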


It is thus worthwhile to define the \emph{tropical determinantal prevariety} 
\begin{equation*}
	\prevar = \bigcap_{\substack{f \text{ is a } \\ ((r+1)\times (r+1))\text{-minor}}} \trop(V(f)) = \bigcap_{\substack{I \subseteq \binom{[d]}{r+1} \\ J \subseteq \binom{[n]}{r+1}}} \trop(V(f^{IJ})).
\end{equation*}
and the \emph{positive tropical determinantal prevariety} 
\begin{equation} \label{eq:positive-tropical-determinantal-prevariety}
	\prevarplus = \bigcap_{\substack{f \text{ is a } \\ ((r+1)\times (r+1))\text{-minor}}} \tropcplus(V(f))  = \bigcap_{\substack{I \subseteq \binom{[d]}{r+1} \\ J \subseteq \binom{[n]}{r+1}}} \tropcplus(V(f^{IJ})),
\end{equation}
where $f^{IJ}$ denotes the polynomial corresponding to the minor given by the rows indexed by $I$ and columns indexed by $J$.
As mentioned above, we have $\tropdet \subseteq \prevar$ and $\tropdetplus \subseteq \prevarplus$.
We emphasize again, that the notion of positivity for prevarieties is purely combinatorial, and the inclusion of a positive tropical variety and the corresponding positive tropical prevariety may be strict.

\begin{table}
	\begin{tabular}{|c|c|c|c|c|c|}
		\hline
		\rowcolor{TableHeader} & $d=3$  & $d=4$   & $d=5$   & $d=6$  & $d=7$   \\ \hline
		\cellcolor{TableHeader} $r= 2$  
		& \cellcolor{TableHighlight}\begin{tabular}[c]{@{}c@{}}YES\\ \Cref{thm:hypersurfaces-positive-bases,th:rank-2-positive-generators}\end{tabular} 
		&\cellcolor{TableHighlight}\begin{tabular}[c]{@{}c@{}}YES\\ \Cref{th:rank-2-positive-generators}\end{tabular}  
		&\cellcolor{TableHighlight}\begin{tabular}[c]{@{}c@{}}YES\\ \Cref{th:rank-2-positive-generators}\end{tabular}   
		&\cellcolor{TableHighlight}\begin{tabular}[c]{@{}c@{}}YES\\ \Cref{th:rank-2-positive-generators}\end{tabular}
		&\cellcolor{TableHighlight}\begin{tabular}[c]{@{}c@{}}YES\\ \Cref{th:rank-2-positive-generators}\end{tabular} \\ \hline
		\cellcolor{TableHeader} $r = 3$ &  
		&\cellcolor{TableHighlight}\begin{tabular}[c]{@{}c@{}}YES\\ \Cref{thm:hypersurfaces-positive-bases}\end{tabular} 
		&\cellcolor{TableHighlight}?                                            
		&\cellcolor{TableHighlight}? 
		& ?   \\ \hline
		\cellcolor{TableHeader} $r=4$   &   &  
		&\cellcolor{TableHighlight}\begin{tabular}[c]{@{}c@{}}YES\\ \Cref{thm:hypersurfaces-positive-bases}\end{tabular}
		& ? 
		& ?    \\ \hline
		\cellcolor{TableHeader} $r=5$   &  &  &  
		&\cellcolor{TableHighlight}\begin{tabular}[c]{@{}c@{}}YES\\ \Cref{thm:hypersurfaces-positive-bases}\end{tabular} 
		& ?    \\ \hline
		\cellcolor{TableHeader} $r=6$   &  &  &  &
		&\cellcolor{TableHighlight}\begin{tabular}[c]{@{}c@{}}YES\\ \Cref{thm:hypersurfaces-positive-bases}\end{tabular} \\ \hline
	\end{tabular}
	\caption{When do the $(r+1)\times (r+1)$-minors form a set of positive-tropical generators, i.e. when is $\tropdetplus=\prevarplus$ for $d\leq n$? A cell is colored in gray if the set of minors forms a tropical basis according to \Cref{thm:shitov}.}
	\label{table:positive-generators-overview}
\end{table}

We can interpret the matrices in the above sets in terms of the different notions of ranks for tropical matrices.
\begin{definition}[Tropical notions of rank] \label{def:tropical-notions-rank}
	Let $A \in \T^{d \times n}$ be a tropical matrix and $M \subseteq \T^{r\times r}$ a submatrix.
	The submatrix $M$ is \emph{tropically singular} if the minimum in the evaluation of the tropical determinant 
	$
	\bigoplus_{\sigma \in S_r} \left(\bigodot_{i = 1}^r M_{i \sigma(i)}  \right) =
        \min_{\sigma \in S_r} \left(\sum_{i = 1}^r M_{i \sigma(i)}  \right)
	$
	is attained at least twice.
	The \emph{tropical rank} of $A$ is the largest integer $r$ such that $A$ has a tropically non-singular submatrix. 
	The \emph{Kapranov rank} of $A$ is the smallest integer $r$ such that there exists a matrix $\tilde{A} \in \C^{d \times n}$ of rank $r$ such that $A = \val(\tilde{A})$.
 	 The \emph{Barvinok rank} of $A$ is the smallest integer $r$ for which $A$ can be written as the tropical sum of $r$ rank-$1$ matrices. A $(d\times n)$-matrix has rank $1$ if it is the tropical matrix product of a $(d\times 1)$-matrix and a $(1\times n)$-matrix.
\end{definition}
It was shown in \cite{develin_ranktropicalmatrix} that 
\begin{equation}\label{eq:inequalities-of-rank}
	\text{tropical rank of } A \leq \text{ Kapranov rank of } A \leq \text{ Barvinok rank of } A
\end{equation}
and that indeed all of these inequalities can be strict. In the light of these notions of rank, we can view the tropical determinantal variety $\tropdet$ and prevariety $\prevar$ as sets as
\begin{align*}
	\tropdet &= \{ A \in \T^{d  \times n} \mid A \text{ has Kapranov rank } \leq r\}\,, \\
		\prevar &= \{ A \in \T^{d  \times n} \mid A \text{ has tropical rank } \leq r\}\;.
\end{align*}
Note that the first inequality in \eqref{eq:inequalities-of-rank} also implies the inclusion $\tropdet \subseteq \prevar$.
We now describe the geometric interpretations of these notions.
Throughout this article, it will be enough to consider tropical linear spaces that arise as tropicalizations of classical linear spaces. The \emph{tropical projective torus} is $\TP = \T/(\RR \odot (1,\dots,1))$, i.e. modulo tropical scalar multiplication. The following is a well-known fact in tropical geometry. We provide a proof in \Cref{sec:appendix-proof-positivity} for completeness.

\begin{prop}\label{prop:kapranov-rank-points-on-hyperplane}
	Let $A \in \tropdet$. Then the columns of $A$ are $n$ points in $\TP^{d-1}$ lying on a tropical linear space of dimension at most $r-1$.
\end{prop}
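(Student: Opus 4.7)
The plan is direct: use the definition of Kapranov rank to produce a classical lift of $A$, and then tropicalize the column span. Since $A \in \tropdet$ has Kapranov rank at most $r$, \Cref{def:tropical-notions-rank} provides a matrix $\tilde A \in \C^{d\times n}$ of classical rank at most $r$ with $A = \val(\tilde A)$ entrywise.

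Let $L \subseteq \C^d$ denote the column span of $\tilde A$, so that $\dim L \leq r$. Each column $\tilde A_{\cdot j}$ of $\tilde A$ lies in $L$ by construction, and applying the valuation coordinatewise yields that the $j$-th column of $A$ is an element of $\trop(L)$. Since the tropicalization of a $k$-dimensional linear subspace of $\C^d$ is a tropical linear space whose image in the tropical projective torus $\TP^{d-1} = \T^d / \RR(1,\dots,1)$ has dimension $k-1$, the tropical linear space $\trop(L) \subseteq \TP^{d-1}$ has dimension at most $r-1$. Passing the columns of $A$ to $\TP^{d-1}$ then yields $n$ points on a tropical linear space of dimension at most $r-1$, as claimed.

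There is no substantive obstacle here: the statement is essentially a translation of the definition of Kapranov rank via the fact that valuation preserves containment in a linear subspace. The only minor point to handle is that some entries of $\tilde A$ may vanish, so the corresponding entries of $A$ are $\infty$; this fits naturally into the conventions for $\T^d$ and $\TP^{d-1}$, so it causes no genuine difficulty.
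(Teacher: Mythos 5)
Your proof is correct and follows essentially the same route as the paper's: take the lift $\tilde A$ guaranteed by the definition of Kapranov rank, note that its columns span a linear space of dimension at most $r$, and observe that the columns of $A$ lie on the tropicalization of that space, which has dimension at most $r-1$ in $\TP^{d-1}$. Your remark about zero entries yielding $\infty$ coordinates is a reasonable clarification, though the paper silently absorbs it into the $\TP^{d-1}$ conventions.
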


\section{Determinantal hypersurfaces}\label{sec:hypersurfaces}

In this section, we seek to understand the positive part of the tropicalization of singular quadratic matrices. 
By definition of the Kapranov rank, the set $\tropdet[n-1][n,n]$ is formed by all tropical $(n\times n)$-matrices of Kapranov rank at most $n-1$.
Let $A\in \T^{n\times n}$ be such a matrix.
We can interpret the columns of $A$ as a point configuration of $n$ labeled points on a tropical hyperplane in the tropical projective torus $\TP^{n-1}$.
In this section, we characterize the \emph{positive point configurations}, those given by matrices $A$ in the positive part $\tropdetplus[n-1][n,n]$ of the tropical variety.
We say that a cone (or a point) is positive if it lies in the positive part.

\subsection{Edges of the Birkhoff polytope}\label{sec:Birkhoff-polytope}

We begin by investigating the maximal cones of $\tropdetplus[n-1][n,n]$ in the tropical hypersurface $\tropdet[n-1][n,n] = \trop(V(\det))$, where we abbreviate
\begin{equation}
		\det = \sum_{\sigma \in S_n}  \left( \sgn(\sigma) \prod_{i=1}^{n} x_{i\sigma(i)} \right) \,.
\end{equation}

This entails that $\tropdet[n-1][n,n]$ is the $\codim 1$-skeleton of the normal fan of the Newton polytope of the polynomial $\det$.
It is the well-known \emph{Birkhoff polytope} $\Birk_n$ (also called \emph{perfect matching polytope} or \emph{assignment polytope}) whose vertices are the $(n\times n)$-permutation matrices.
The sum of all entries in a row or a column of a matrix in $\Birk_n$ is $1$.
Therefore, $\tropdet[n-1][n,n]$ has a lineality space spanned by the vectors
\begin{equation}\label{eq:lineality space}
	 \sum_{i=1}^n E_{ij} \text{ for } j\in[n] \text{ and }	\sum_{j=1}^n E_{ij} \text{ for } i\in[n],
\end{equation}
where $E_{ij}$ denotes the standard basis matrix in $\RR^{n\times n}$. For notational convenience we identify a permutation $\sigma \in S_n$ with the permutation matrix that represents it.
Two vertices of $\Birk_n$ (corresponding to permutations $\sigma, \pi\in \Sym_n$) are connected by an edge if and only if $\sigma \pi^{-1}$ is a cycle \cite{billera96_combinatoricspermutationpolytopes}.
A maximal cone $C \subseteq \tropdet[n-1][n,n]$ is the normal cone of an edge $\conv(\sigma, \pi)$ of $\Birk_n$ for $\sigma, \pi \in S_n$ (such that $\sigma \pi^{-1}$ is a cycle).
For a weight vector $w$ in the interior $\interior(C)$ of the cone, the initial ideal $\initial_w(I)$ of $I = \langle \det \rangle$ is generated by the binomial $\initial_w(\det) = \sgn(\sigma) \prod_{i=1}^{n} x_{i\sigma(i)}+ \sgn(\pi)  \prod_{i=1}^{n} x_{i\pi(i)}$.
Applying \Cref{proposition:speyer-williams-positivity} to this polynomial yields a characterization of positive maximal cones of $\tropdet[n-1][n,n]$.

\begin{prop}\label{cor: positive iff different signs of permutations}
	Let $C \subseteq \tropdet[n-1][n,n]$ be a maximal cone which is dual to an edge $\conv(\sigma, \pi)$ of the Birkhoff polytope $\Birk_n$.
	Then $C$ is positive if and only if $\sgn(\sigma) \neq \sgn(\pi)$.\qed
\end{prop}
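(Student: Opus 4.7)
The cleanest route is to invoke \Cref{cor:hypersurface-combinatorial-positivity}, which identifies the positive part of a tropical hypersurface with its combinatorially positive part. Since $\tropdet[n-1][n,n] = \trop(V(\det))$ is a hypersurface, it suffices to decide combinatorial positivity of $w \in \interior(C)$ with respect to $\det = \sum_{\sigma \in S_n} \sgn(\sigma) \prod_{i=1}^n x_{i\sigma(i)}$.

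The plan is as follows. First I would recall the combinatorial setup: writing $\det$ in the form $\sum_{e \in E^+} x^e - \sum_{e \in E^-} x^e$, the exponents $E^+$ correspond to the even permutations in $S_n$ (with coefficient $+1$) and $E^-$ to the odd permutations (with coefficient $-1$). Their Newton polytope is the Birkhoff polytope $\Birk_n$, and the maximal cones of $\tropdet[n-1][n,n]$ are the codimension-$1$ normal cones dual to edges of $\Birk_n$.

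Next, for $w$ in the interior of the maximal cone $C$ dual to the edge $\conv(\sigma,\pi)$, the evaluations $\langle w, e_\tau \rangle$ (where $e_\tau$ is the exponent of the term associated to $\tau \in S_n$) attain their minimum over $\tau \in S_n$ exactly at $\tau = \sigma$ and $\tau = \pi$, and strictly at no other permutation. By definition of the combinatorially positive part, $w \in \tropcombplus(\det)$ if and only if this minimum is realized at an element of $E^+$ and at an element of $E^-$. Since the only two minimizers are $\sigma$ and $\pi$, this happens exactly when one of $\sigma, \pi$ is even and the other is odd, i.e.\ when $\sgn(\sigma) \neq \sgn(\pi)$.

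Finally, combining the two previous steps with \Cref{cor:hypersurface-combinatorial-positivity} gives $w \in \tropcplus(V(\det))$ iff $\sgn(\sigma) \neq \sgn(\pi)$, and since this criterion is constant on $\interior(C)$ and the positive part is closed, the same equivalence holds for the full maximal cone $C$. There is no real obstacle here once the machinery of \Cref{cor:hypersurface-combinatorial-positivity} is invoked; the only subtlety worth double-checking is that no permutation other than $\sigma$ and $\pi$ minimizes $\langle w, e_\tau \rangle$ on $\interior(C)$, which is precisely the defining property of the normal cone of the edge $\conv(\sigma,\pi)$ in $\Birk_n$.
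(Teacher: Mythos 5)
Your proof is correct and follows essentially the same route as the paper, which applies \Cref{proposition:speyer-williams-positivity} directly to the binomial initial form $\initial_w(\det) = \sgn(\sigma)\prod x_{i\sigma(i)} + \sgn(\pi)\prod x_{i\pi(i)}$. Invoking \Cref{cor:hypersurface-combinatorial-positivity} (which is itself a one-line consequence of \Cref{proposition:speyer-williams-positivity}) and checking that the minimum over $S_n$ is attained exactly at $\sigma,\pi$ on $\interior(C)$ is the same argument in a slightly more explicitly combinatorial packaging.
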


\begin{remark}\label{rem:maximal-cones-positive-corank-1}
\Cref{cor: positive iff different signs of permutations} fully characterizes the positivity of all cones of $\tropdet[n-1][n,n]$. 
Let $C$ be a (low-dimensional) positive cone of $\tropdet[n-1][n,n]$ and $A \in \interior(C)$. The initial form $\initial_A(\det)$ has terms of mixed signs. Since every monomial of the initial form corresponds to a vertex of $B_n$, and the edge graph of the face $F_C$ dual to $C$ is connected, this implies that there is an edge of $F_C$ whose vertices correspond to monomials (i.e. permutations) of different signs. This edge is dual to a positive maximal cone of $\tropdet[n-1][n,n]$ containing $C$. 
\end{remark}

\subsection{Triangle criterion for positivity}\label{sec:triangle-crit}

In this section, we identify the positive part of the tropical determinantal hypersurface $\tropdet[n-1][n,n]$. We make use of \Cref{cor: positive iff different signs of permutations} to obtain the \emph{triangle criterion}. 
It turns out that this works well for $n = 3,4$ but there is an example for $n = 5$ where this fails. 
\Cref{rem:maximal-cones-positive-corank-1} implies that it is suffices to consider maximal cones of $\tropdet[n-1][n,n]$.
The triangle criterion assigns a cartoon to each such maximal cone.
We seek to determine the positivity of this cone from the respective cartoon.
First, we give the construction of the cartoon and give the triangle criterion for detecting positivity for $n=3,4$.  
Afterwards, we describe its geometric interpretation in terms of tropical point configurations, and show that the triangle criterion does not hold for $n\geq 5$. 

\begin{const}[Cartoons of maximal cones]\label{construction:triangle-criterion}\label{construction:cartoon-for-cones}
	Let $C \subseteq \tropdet[n-1][n,n]$ be a maximal cone. Then $C$ is dual to an edge $\conv(\sigma,\pi)$ of the Birkhoff polytope $B_n$, whose vertices correspond to permutations in $S_n$. Let $K_n$ denote the complete graph on nodes $\{v_1, \dots, v_n\}$. To obtain the \emph{cartoon} of $C$ we decorate the complete graph with $n$ points placed on edges and nodes of $K_n$ as follows: for each $j \in [n]$, decorate the edge $v_{\sigma^{-1}(j)}v_{\pi^{-1}(j)}$ of $K_n$ with a marking if $\sigma^{-1}(j)\neq \pi^{-1}(j)$. If $\sigma^{-1}(j) = \pi^{-1}(j)$, decorate the vertex $v_{\sigma^{-1}(j)}$.
	\end{const}
	
		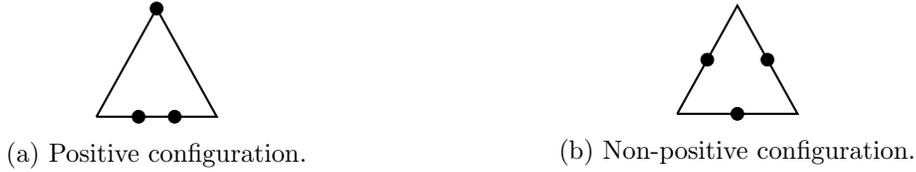
\begin{figure}
		\centering
		\begin{subfigure}{0.49 \textwidth}
			\centering
			\begin{tikzpicture}[scale=0.8]
				\filldraw (1,1.8) circle (3pt);
				\filldraw (0.7,0) circle (3pt);
				\filldraw (1.3,0) circle (3pt);
				\draw[thick] (0,0) -- (2,0) -- (1,1.8) -- (0,0);
			\end{tikzpicture}
			\caption{Positive configuration.}
			\label{subfig:transposition3}
		\end{subfigure}
		\begin{subfigure}{0.49 \textwidth}
			\centering
			\begin{tikzpicture}[scale = 0.8]
				\filldraw (1,0) circle (3pt);
				\filldraw (0.5,0.9) circle (3pt);
				\filldraw (1.5,0.9) circle (3pt);
				\draw[thick] (0,0) -- (2,0) -- (1,1.8) -- (0,0);
			\end{tikzpicture}
			\caption{Non-positive configuration.}
			\label{subfig:cycle3}
		\end{subfigure}
		\caption{The possible cartoons of maximal cones in $\tropdet[2][3,3]$.}
		\label{fig: proof of starship criterion}
	\end{figure}

	\begin{example}[Cartoons of maximal cones]\label{ex:cartoon-example}
		Consider the cone $C = \cone(E_{13},E_{23},E_{31},E_{32}) \subseteq \tropdet[2][3,3]$. It is dual to the edge $\conv(\sigma, \pi)$, where $\sigma = (1,2)$ is a transposition and $\pi = id$. The cartoon is a decorated $K_3$, with two markings on the edge $v_1 v_2$ and a marking placed at the node $v_3$. A cartoon of this type is shown in \Cref{subfig:transposition3}.
	\end{example}

\begin{prop}[Triangle criterion for cartoons]\label{prop: triangle criterion}
	Let $n = 3,4$ and $C \subseteq \tropdet[n-1][n,n]$ be a maximal cone.
	$C$ is positive if and only if its cartoon does not contain a marked triangle, i.e. a triangle with three distinct markings, where each edge contains at least one marking in its interior or on an incident vertex.
\end{prop}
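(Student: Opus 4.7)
The plan is to combine \Cref{cor: positive iff different signs of permutations} with a direct structural analysis of the cartoon. Setting $\rho := \sigma^{-1}\pi$, which is a $k$-cycle with $k \in \{2,\ldots,n\}$ by the discussion in \Cref{sec:Birkhoff-polytope}, and using that a $k$-cycle has sign $(-1)^{k-1}$, the cited proposition gives that $C$ is positive precisely when $k$ is even. So it suffices to prove, for $n \in \{3,4\}$, that the cartoon contains a marked triangle if and only if $k$ is odd.

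First I would record the explicit shape of the cartoon. Unwinding \Cref{construction:cartoon-for-cones}, the mark contributed by index $j$ lies on the edge $\{v_{\pi^{-1}(j)}, v_{\rho(\pi^{-1}(j))}\}$ (or on that single vertex when $\rho$ fixes $\pi^{-1}(j)$). Consequently, when $k=2$ the single transposition edge $\{v_a, v_b\}$ carries two interior marks and each of the $n-2$ fixed vertices of $\rho$ carries a vertex mark, while when $k \geq 3$ the $k$ edge-marks form a $k$-cycle subgraph of $K_n$ (each edge of it bearing exactly one interior mark) and the $n-k$ fixed vertices each carry a vertex mark. The key observation driving the whole argument is that an interior mark only covers its own edge, whereas a vertex mark at $v$ covers all triangle edges incident to $v$.

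With this in hand the proof splits into cases according to $k \in \{2,3,4\}$. For $k=3$ the three edge-marks are themselves three distinct markings witnessing a triangle, giving the ``odd implies marked triangle'' direction. For $k=2$, I would argue that the distinctness requirement always fails: a triangle containing the doubly-marked edge spends one interior mark on that edge and is then forced to reuse the single vertex mark at the third vertex for both of the remaining edges, while a triangle avoiding the doubly-marked edge has at most two vertex marks among its three vertices, leaving an edge uncovered. For $k=4$, which can only occur when $n=4$, there are no vertex marks and the four edge-marks form a $4$-cycle in $K_4$; every triangle in $K_4$ contains exactly one diagonal of this $4$-cycle, and the diagonal is covered by no mark.

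The main obstacle will be the distinctness bookkeeping in the $k=2$ case, where several markings appear near a given triangle but the requirement of three \emph{distinct} witnesses nonetheless forces failure; the bound $n \leq 4$ is exactly what makes this pigeonhole argument on vertex marks go through. This also foreshadows where the criterion breaks: already for $n=5$ and $k=5$, the edge-marks form a triangle-free $5$-cycle with no vertex marks, so no marked triangle exists even though $k$ is odd, explaining why only a necessary condition can survive in higher dimensions.
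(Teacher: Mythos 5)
Your proof is correct and takes essentially the same route as the paper: reduce via \Cref{cor: positive iff different signs of permutations} to parity of the cycle length $k$, identify the cartoon as a $k$-cycle of edge marks together with vertex marks on the $n-k$ fixed points, and then check each of the finitely many $k$. The paper delegates the case checks to \Cref{fig: proof of starship criterion,fig: proof of starship cirterion n=4} while you write out the combinatorial verification explicitly (the pigeonhole on vertex marks for $k=2$ and the uncovered diagonal for $k=4$), which is a welcome elaboration but not a different argument. One small remark: your foreshadowing example ($n=5$, $k=5$, odd but triangle-free) shows the ``not positive $\Rightarrow$ marked triangle'' direction failing, whereas the paper's \Cref{ex:triangle-crit-counterexample} uses $k=2$ to show the converse direction failing; both are legitimate, but they are complementary failure modes.
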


\begin{proof}
	
	The cone $C$ is dual to an edge $\conv(\sigma, \pi)$. 
	Two permutations form an edge of $B_n$ if and only if $\sigma \pi^{-1}$ is a cycle. 
	By \Cref{cor: positive iff different signs of permutations} the cone $C$ is positive if and only if $\sigma \pi^{-1}$ is a cycle of even length.
          Let $\sigma \pi^{-1}$ be a cycle of length $\ell$, so it is of the form $(i_1, \dots, i_{\ell})$.
          Now, denote $I = \{i_1, \dots, i_{\ell}\}$ and $J = [n] \setminus I$.
          Then \Cref{construction:triangle-criterion} decorates each node $v_{\sigma^{-1}(j)}$ for $j \in J$ and decorates the cycle formed by the edges $(v_{\sigma^{-1}(i_{k+1})},v_{\pi^{-1}(i_{k})})$ (where $i_{\ell+1} = i_1$).
          Therefore, up to symmetry, it is enough to consider the potential cycle lengths to determine the configurations.

	\Cref{fig: proof of starship criterion} shows all possible cartoons for $n=3$, up to permutation of the nodes of the graph.
	More precisely, 
	\Cref{subfig:transposition3} shows the cartoon for when $\sigma \pi^{-1}$ is a transposition and \Cref{subfig:cycle3} shows the cartoon for when $\sigma \pi^{-1}$ is a $3$-cycle.
	The possible cartoons for $n=4$ are shown in \Cref{fig: proof of starship cirterion n=4}: \Cref{subfig:transposition4} shows the cartoon for when $\sigma \pi^{-1}$ is a transposition, \Cref{subfig:cycle43} the cartoon of a $3$-cycle, and \Cref{subfig:cycle44} the cartoon of a $4$-cycle.
       Summarizing, the configurations depicted in \Cref{subfig:transposition3}, \Cref{subfig:transposition4} and \Cref{subfig:cycle44} are positive, while \Cref{subfig:cycle3} and \Cref{subfig:cycle43} are negative. 
\end{proof}

	\begin{figure}
	\centering
	\begin{subfigure}{0.3 \textwidth}
		\centering
		\begin{tikzpicture}[scale = 0.4]
			\node (1) at (4, 11) {};
			\node (2) at (1, 6) {};
			\node (3) at (9, 7) {};
			\node (4) at (7, 4.75) {};
			\draw[thick] (2.center) to (4.center);
			\draw[thick] (4.center) to (3.center);
			\draw[thick] (3.center) to (1.center);
			\draw[thick] (1.center) to (2.center);
			\draw[thick] (2.center) to (3.center);
			\draw[thick] (1.center) to (4.center);
			\filldraw (7.75, 5.5) circle (7pt);
			\filldraw (8.25, 6.25) circle (7pt);
			\filldraw (1) circle (7pt);
			\filldraw (2) circle (7 pt);
		\end{tikzpicture}
		\caption{Positive configuration.}
		\label{subfig:transposition4}
	\end{subfigure}
	\begin{subfigure}{0.35 \textwidth}
		\centering
		\begin{tikzpicture}[scale = 0.4]
			\node (1) at (4, 11) {};
			\node (2) at (1, 6) {};
			\node (3) at (9, 7) {};
			\node (4) at (7, 4.75) {};
			\draw[thick] (2.center) to (4.center);
			\draw[thick] (4.center) to (3.center);
			\draw[thick] (3.center) to (1.center);
			\draw[thick] (1.center) to (2.center);
			\draw[thick] (2.center) to (3.center);
			\draw[thick] (1.center) to (4.center);
			\filldraw (1) circle (7pt);
			\filldraw (5, 6.5) circle (7pt);
			\filldraw (4.25, 5.25) circle (7pt);
			\filldraw (8, 5.75) circle (7pt);
		\end{tikzpicture}
		\caption{Non-positive configuration.}
		\label{subfig:cycle43}
	\end{subfigure}
	\begin{subfigure}{0.3 \textwidth}
		\centering
		\begin{tikzpicture}[scale = 0.4]
			\node (1) at (4, 11) {};
			\node (2) at (1, 6) {};
			\node (3) at (9, 7) {};
			\node (4) at (7, 4.75) {};
			\draw[thick] (2.center) to (4.center);
			\draw[thick] (4.center) to (3.center);
			\draw[thick] (3.center) to (1.center);
			\draw[thick] (1.center) to (2.center);
			\draw[thick] (2.center) to (3.center);
			\draw[thick] (1.center) to (4.center);
			\filldraw (4.25, 5.25) circle (7pt);
			\filldraw (8, 5.75) circle (7pt);
			\filldraw (2.5, 8.5) circle (7pt);
			\filldraw (6.75, 8.75) circle (7pt);
		\end{tikzpicture}
		\caption{Positive configuration.}
		\label{subfig:cycle44}
	\end{subfigure}
	\caption{The possible cartoons of maximal cones in $\tropdet[3][4,4]$.}
	\label{fig: proof of starship cirterion n=4}
\end{figure}
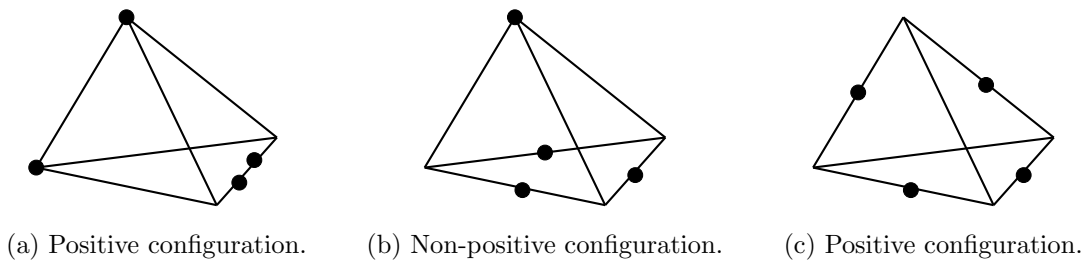

\begin{example}[Triangle criterion fails for $n\geq 5$]\label{ex:triangle-crit-counterexample}
	Let $C \subseteq \tropdet[4][5,5]$ be the maximal cone that is dual to the edge $\conv(\sigma,\pi)$ of $\Birk_5$, where $\sigma = (4,5)$ is a transposition and $\pi = id$.
	Then, modulo the lineality space of $\tropdet[4][5,5]$, every matrix $A \in C$ satisfies the zero pattern
	\[
	\ma
	0 &  &  &  &  \\
	& 0 &  &  & \\
	& & 0 &  &  \\
	& &  & 0 & 0 \\
	& & & 0 & 0 
	\trix,
	\]
	i.e. $A_{ij} = 0$ whenever $\sigma(i) = j$ or $\pi(i) = j$, and all other entries of $A$ are nonnegative.
	The cone $C$ has $18$ rays, corresponding to the blank spaces in the zero pattern above. By \Cref{cor: positive iff different signs of permutations} this cone is positive. However, the cartoon of $C$, as shown in \Cref{fig:constructionDiagram}, contains a triangle in which each node is decorated with a marking. This example can be generalized to any $n\geq 5$.
	\begin{figure}[h]
	\centering
	\begin{tikzpicture}[scale = 0.45]
		\node (1) at (4, 11) {};
		\node (2) at (1, 6) {};
		\node (3) at (9, 7) {};
		\node (4) at (7, 4.75) {};
		\node (5) at (4, 7.75) {};
		\draw[thick] (2.center) to (4.center);
		\draw[thick] (4.center) to (3.center);
		\draw[thick] (3.center) to (1.center);
		\draw[thick] (1.center) to (2.center);
		\draw[thick] (2.center) to (3.center);
		\draw[thick] (1.center) to (4.center);
		\draw[thick]  (5.center) to (1.center);
		\draw[thick]  (2.center) to (5.center);
		\draw[thick]  (5.center) to (4.center);
		\draw[thick]  (5.center) to (3.center);
		\filldraw (7.75, 5.5) circle (7pt);
		\filldraw (8.25, 6.25) circle (7pt);
		\filldraw (1) circle (7pt);
		\filldraw (2) circle (7 pt);
		\filldraw (5) circle (7pt);
	\end{tikzpicture}
	\caption{The diagram of the cone in \Cref{ex:triangle-crit-counterexample}.}
	\label{fig:constructionDiagram}
\end{figure}
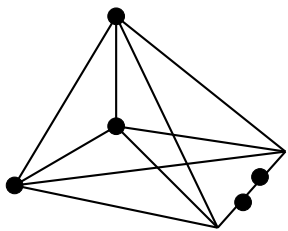
\end{example}

\subsection{Geometric triangle criterion}\label{sec:geom-triangle-crit}

	As described in \Cref{prop:kapranov-rank-points-on-hyperplane}, the columns of $A$ can be viewed as $n$ points in $\TP^{n-1}$ lying on a common tropical linear space of dimension $n-2$.
	We now show how the cartoons describe the geometry of these point configurations. But first, we describe two different important kinds of cones.
	The Birkhoff polytope $B_n \subseteq \RR^{n \times n}$ has vertices corresponding to permutations in $S_n$. Modulo lineality space of the normal fan of $B_n$, an edge $\conv(\sigma, \pi)$ has normal cone 
	\begin{equation}\label{eq:rays-of-cone-tropdet}
		C = \cone(E_{ij} \mid i,j \in [n], \sigma(i) \neq j, \pi(i) \neq j).
	\end{equation}
	The standard simplex $\Delta_{n-1} \subseteq \RR^n$ is the convex hull of the unit vectors $e_1,\dots, e_n$. Modulo lineality space of the normal fan of $\Delta_{n-1}$, an edge $\conv(e_k,e_l), k,l \in [n]$ has normal cone
	\begin{equation}
		W_{kl} = \cone(e_i \mid i \in [n], i \neq k, i \neq l).
	\end{equation}
	
	Up to translation, there is a unique tropical hyperplane $H$ of dimension $n-2$ in $\TP^{n-1}$.
	This hyperplane can be viewed as the codimension-$1$ skeleton of the normal fan of $\Delta_{n-1}$.
	Equivalently, the tropical hyperplane $H=H_c$ is the set of points 
	\begin{equation}\label{eq:tropical-hyperplane}
		H_c = \{x \in \TP^{d-1} \mid \text{ the minimum of } x_i + c_i, \ i \in [n] \text{ is attained at least twice} \}
	\end{equation}
	and the point $-(c_1, \dots, c_d)$ is the \emph{apex} of $H$. 
 We call a cone $W_{kl}$ of dimension $n-2$ a \emph{wing} of $H$.

	\begin{example}[A tropical point configuration]\label{ex:cartoon-example-cont}
		Let $C$ be the cone from \Cref{ex:cartoon-example} and consider the matrix
		\[
		A = \ma 
		0 & 0 & 2 \\
		0 & 0 & 1 \\
		3 & 1 & 0 \trix \in \interior(C).
		\] 
		The point configuration in $\TP^2$ is displayed in \Cref{fig:cartoon-example-cont} (in the chart where the last coordinate is $0$). The points lie on the common hyperplane with apex $(1, 1, 0)$.
		The first two columns lie on the wing $W_{1,2} = \cone(e_3)$. The third column lies on the wing $W_{2,3} = \cone(e_1)$.
		
		\begin{figure}
			\centering
			\begin{tikzpicture}[scale=0.35]
				\filldraw (-3,-3) circle (7pt);
				\filldraw (-1,-1) circle (7pt);
				\filldraw (2,1) circle (7pt);
				\draw[thick] (-4,-4) -- (1,1) -- (4,1);
				\draw[thick] (1,1) -- (1,4);
			\end{tikzpicture}
			\caption{The point configuration from \Cref{ex:cartoon-example-cont}.}
			\label{fig:cartoon-example-cont}
		\end{figure}
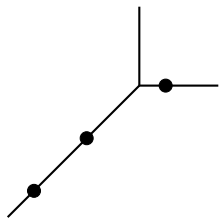
	\end{example}

	The lineality space of $\tropdet[n-1][n,n]$ is spanned by the vectors in \eqref{eq:lineality space} (in \Cref{sec:Birkhoff-polytope}). We describe the more general lineality space of $\tropdet$ in more detail in \Cref{excursion:lineality-spaces}. However, we exploit one main property here.
	
	\begin{lemma}\label{lem:wlog-matrix-nonneg-mod-linspace}
		Let $C \subseteq \tropdet[n-1][n,n]$ be a cone and $A \in C$.  
		There exists a matrix $A' \in \tropdet[n-1][n,n]$ such that $A \sim A'$ modulo lineality space of $\tropdet[n-1][n,n]$ and $A'_{ij} \geq 0$ for all $i,j \in [n]$. Furthermore, the columns of $A'$ are points on the tropical hyperplane $H_0$ with apex at the origin. 
		If $C$ is a maximal cone dual to the edge $\conv(\sigma,\pi)$ of $B_n$, then $A'_{ij} = 0$ if $j \in \{\sigma(i), \pi(i)\}$.
	\end{lemma}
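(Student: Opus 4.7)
The plan is to construct $A'$ in two parts: a general argument producing $A' \geq 0$ with columns on $H_0$, and, in the maximal-cone case, a refined argument that additionally enforces the prescribed zeros.

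For the general case, since $A \in \tropdet[n-1][n,n]$, \Cref{prop:kapranov-rank-points-on-hyperplane} tells us that all columns of $A$ lie on a common tropical hyperplane $H_c \subseteq \TP^{n-1}$, meaning $\min_i\{A_{ij}+c_i\}$ is attained at least twice for every $j$. Adding the lineality element $\sum_i c_i R_i$, where $R_i = \sum_j E_{ij}$ is the $i$th row indicator from \eqref{eq:lineality space}, produces an equivalent matrix whose columns lie on $H_0$. Subtracting from each column $j$ its minimum $m_j$ via the lineality shift $-\sum_j m_j C_j$ (where $C_j = \sum_i E_{ij}$ is the $j$th column indicator) then renders the matrix nonnegative while preserving the set of indices at which each column attains its minimum, so the columns remain on $H_0$. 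This establishes the first two claims of the lemma.

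For the maximal-cone case, suppose $C$ is dual to $\conv(\sigma,\pi)$, with $\sigma\pi^{-1}$ a cycle of length $\ell$. By \eqref{eq:rays-of-cone-tropdet} the cone $C$ modulo lineality is the nonnegative cone on $\{[E_{ij}] : j \notin \{\sigma(i),\pi(i)\}\}$, so $A$ is equivalent to some nonnegative combination $\sum \lambda_{ij} E_{ij}$ supported on these positions. The resulting matrix representative is nonnegative and vanishes precisely at positions with $j \in \{\sigma(i),\pi(i)\}$, giving the required zero pattern. For each $j$ in the support of the cycle $\sigma\pi^{-1}$, the two rows $\sigma^{-1}(j)$ and $\pi^{-1}(j)$ are distinct and both contribute a zero to column $j$, so that column lies on $H_0$.

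The main obstacle is to simultaneously place the columns $j$ outside the cycle support on $H_0$: for such $j$ the two forced zeros collide at $\sigma^{-1}(j)=\pi^{-1}(j)$, leaving only one zero in column $j$. To resolve this I plan to invoke LP duality for the assignment problem: the optimality of both $\sigma$ and $\pi$ for the minimum-weight assignment on $A$ yields potentials $u_i, v_j$ satisfying $A_{ij} \geq u_i + v_j$ with equality on $\sigma \cup \pi$ (complementary slackness for two optimal primal solutions), and the shift $A'_{ij} = A_{ij}-u_i-v_j$ realizes the zero pattern. For $j$ outside the cycle support, the value $v_j$ has remaining slack; choosing it to equal $\min_i(A_{ij}-u_i)$ forces a second tight entry $A'_{i^*j}=0$ at some $i^* \neq \sigma^{-1}(j)$, placing column $j$ on $H_0$. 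Showing that these maximal choices can be made consistently across all exterior columns---that the adjustment of the row potentials $u_i$ for $i$ outside the cycle, forced by $u_i = A_{i\phi(i)} - v_{\phi(i)}$ with $\phi = \sigma|_{[n]\setminus I}$, still preserves all the dual feasibility inequalities---is the technical crux, which I would attack by induction on $n-\ell$ or by a Hall-type argument on the bipartite graph of exterior row-column pairs.
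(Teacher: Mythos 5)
Your general-case argument (shift rows by the hyperplane parameters $c_i$, then shift columns to zero out the minima) is correct, and is in fact more explicit than the paper, whose proof handles only the maximal-cone case directly.

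For the maximal-cone case you have correctly put your finger on the subtle point. The paper passes from the representative $A' = \sum \lambda_{ij}E_{ij}$ supplied by \eqref{eq:rays-of-cone-tropdet} directly to ``columns on $H_0$'' by observing that the column minima are $0$; but for a column $j$ with $\sigma^{-1}(j) = \pi^{-1}(j)$ that representative carries only one forced zero, so the minimum need not be attained twice. For $n = 3$, $\sigma = (1,2)$, $\pi = id$, the interior point $E_{13}+2E_{23}+5E_{31}+7E_{32}$ of the maximal cone has third column $(1,2,0)$ with a unique minimizing index; one must pass to a further lineality shift such as $\sma 0 & 0 & 0 \\ 0 & 0 & 1 \\ 6 & 8 & 0 \strix$ to obtain a representative satisfying all three claims simultaneously. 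Your LP-duality idea is the right direction, but as written it has a gap of its own: choosing $v_j = \min_i(A_{ij}-u_i)$ merely makes the row $i_j = \sigma^{-1}(j)$ tight, which you already have, and does not by itself produce a second tight entry in column $j$. You acknowledge that guaranteeing a second tight entry consistently across all exterior columns is the technical crux and leave it open; until that Hall-type or inductive argument is supplied, the third claim of the lemma is not established. The attempt is therefore incomplete, but it accurately isolates the step in the argument that requires real work.
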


\begin{proof}
	Let $A \in C$.
	Then by \eqref{eq:rays-of-cone-tropdet} there is a matrix $A' \in C$ such that $A \sim A'$ modulo lineality space of $\tropdet[n-1][n,n]$, and $A'_{ij} \geq 0$ for all $i,j \in [n]$ such that $ j \not \in \{\sigma(i), \pi(i)\}$ and $A'_{ij} = 0$ otherwise. 
	For each column $j \in [n]$ this means that the minimum value is $0$, and \eqref{eq:tropical-hyperplane} implies that the columns of $A$ lie on the tropical hyperplane $H_0$.
\end{proof}
	
\begin{example}
	Consider the matrix from \Cref{ex:cartoon-example-cont}.
	We first subtract the apex $c=(1,1,0)$  of the tropical line from every column of the matrix. Then we add $m_j (1,\dots,1)^t$ to every column, where $m_j$ is the minimum entry of the $j$th column. This yields
	\[
		 \ma 
		0 & 0 & 2 \\
		0 & 0 & 1 \\
		3 & 1 & 0 \trix \sim 
		\ma 
		 -1 & -1 &  1 \\
		-1 & -1  & 0 \\
		  3 &  1 & 0 \trix \sim
		\ma
		  0 & 0 &  1 \\
		  0 & 0 & 0 \\
		  4 & 2 & 0 \trix .
	\] 
\end{example}

\begin{lemma}\label{lem:cartoon-geometric-interpretation}
	Let $C$ be a maximal cone of $\tropdet[n-1][n,n]$ and $\conv(\sigma, \pi)$ be the dual edge of the Birkhoff polytope $B_n$.
	Let $A \in C$ and let $H$ be a tropical hyperplane containing the columns of $A$. If the edge $v_{\sigma^{-1}(j)} v_{\pi^{-1}(j)}$ is decorated in the cartoon of $C$, then the $j$-th column $A_j$ of $A$ lies on the wing $W_{\sigma^{-1}(j), \pi^{-1}(j)}$ of $H$. If the node $v_{\sigma^{-1}(j)}$ is decorated in the cartoon, then the column $A_j$ lies on the wing $W_{k,\sigma^{-1}(j)}$ for some $k \in [n]$.
\end{lemma}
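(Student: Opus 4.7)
The plan is to reduce to a normal form via \Cref{lem:wlog-matrix-nonneg-mod-linspace} and then read the conclusion directly off the column entries. Since the cone $C$ contains the lineality space of $\tropdet[n-1][n,n]$, and acting by this lineality space corresponds to adding a constant to each row or column, the associated point configuration in $\TP^{n-1}$ is only shifted by a translation. In particular, if $H = H_c$ is the given tropical hyperplane with apex $-c$, applying the row-lineality moves sends $H$ to the hyperplane $H_0$ (apex at the origin) and sends the wing $W_{kl}$ of $H$ to the wing $W_{kl}$ of $H_0$. So I may assume throughout that $H = H_0$, and replace $A$ by the matrix $A'$ produced by \Cref{lem:wlog-matrix-nonneg-mod-linspace}, whose entries satisfy $A'_{ij}\geq 0$ and $A'_{ij}=0$ whenever $j\in\{\sigma(i),\pi(i)\}$. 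The columns of $A'$ are then points of $H_0$.

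For the first assertion, suppose the edge $v_{\sigma^{-1}(j)}v_{\pi^{-1}(j)}$ is decorated, so that $\sigma^{-1}(j)\neq \pi^{-1}(j)$. By \Cref{lem:wlog-matrix-nonneg-mod-linspace} the $j$-th column has $A'_{\sigma^{-1}(j),j}=A'_{\pi^{-1}(j),j}=0$, while every other entry of that column is nonnegative. Hence the minimum of the $j$-th column (which equals $0$) is attained at rows $\sigma^{-1}(j)$ and $\pi^{-1}(j)$, and by the description \eqref{eq:tropical-hyperplane} of the wings of $H_0$ this means precisely that $A'_j\in W_{\sigma^{-1}(j),\pi^{-1}(j)}$.

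For the second assertion, suppose instead that the node $v_{\sigma^{-1}(j)}$ is decorated, i.e. $\sigma^{-1}(j)=\pi^{-1}(j)$. Now the construction only forces a single zero in column $j$, at row $\sigma^{-1}(j)$. However, since $A\in \tropdet[n-1][n,n]$, \Cref{prop:kapranov-rank-points-on-hyperplane} guarantees that every column of $A'$ is a point of the tropical hyperplane $H_0$, so the minimum of column $j$ must be attained at least twice. Consequently there exists some index $k\neq \sigma^{-1}(j)$ with $A'_{k,j}=0$, which places $A'_j$ in the wing $W_{k,\sigma^{-1}(j)}$ of $H_0$, as claimed.

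The argument is essentially bookkeeping after the normalization of \Cref{lem:wlog-matrix-nonneg-mod-linspace}. The only slightly subtle point---and the part that would be the main obstacle if the lemma were not already in place---is the second case: there, the cartoon records only a single zero in column $j$, and one needs to invoke the hyperplane condition from \Cref{prop:kapranov-rank-points-on-hyperplane} to produce a second one and identify the specific wing. Once this is observed, matching decorated edges to wings $W_{\sigma^{-1}(j),\pi^{-1}(j)}$ and decorated nodes to wings $W_{k,\sigma^{-1}(j)}$ is immediate.
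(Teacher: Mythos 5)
Your proof is correct and follows essentially the same route as the paper's: normalize $A$ via \Cref{lem:wlog-matrix-nonneg-mod-linspace} so that $H=H_0$ and the zero pattern matches the cartoon, then read the wing directly off the zero entries of each column. The paper's proof is even terser in the decorated-node case (it simply asserts the column may lie on any wing avoiding $e_{\sigma^{-1}(j)}$), while you spell out why a second zero must exist, which is a useful clarification. One small nitpick: at that point you attribute the fact that the columns of $A'$ lie on $H_0$ to \Cref{prop:kapranov-rank-points-on-hyperplane}, but that proposition only says the columns lie on \emph{some} tropical hyperplane, with no control over its apex; the specific statement that the hyperplane is $H_0$ after normalization comes from \Cref{lem:wlog-matrix-nonneg-mod-linspace}, which you already invoked at the start. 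Since you had already recorded that conclusion, the argument goes through -- just adjust the citation.
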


\begin{proof}
By \Cref{lem:wlog-matrix-nonneg-mod-linspace} we can assume that $A_{ij} = 0$ for all $i,j \in [n]$ such that $j\in\{\sigma(i), \pi(i)\}$, and $A_{ij}\geq 0$ otherwise, and that $H = H_0$ is the tropical hyperplane with apex at the origin.
Equivalently, $A_{ij} = 0$ if $i \in \{\sigma^{-1}(j), \pi^{-1}(j)\}$.
	In particular $A_{\sigma^{-1}(j)j}=A_{\pi^{-1}(j)j}=0$. 
	The cartoon of $C$ has a decorated edge $v_{\sigma^{-1}(j)} v_{\pi^{-1}(j)}$ if and only if $\sigma^{-1}(j) \neq \pi^{-1}(j)$. If $\sigma^{-1}(j) \neq \pi^{-1}(j)$, then the column $A_j$ is contained in the wing $W = \cone(e_i \mid i \neq \sigma^{-1}(j), i \neq \pi^{-1}(j))$. 
	The cartoon has a decorated node $v_{\sigma^{-1}(j)}$ if and only if $\sigma^{-1}(j)=\pi^{-1}(j)$, and the column $A_j$ may lie on any wing not containing the ray in direction $e_{\sigma^{-1}(j)}$. 
\end{proof}

\begin{const}[Cartoons of matrices]
	Let $C \subseteq \tropdet[n-1][n,n]$ be a maximal cone with dual edge $\conv(\sigma, \pi)$ and $A \in \interior(C)$. Let $H$ be a tropical hyperplane containing the columns of $A$. 
	To obtain the \emph{cartoon of $A$ with respect to $H$}, we decorate the boundary complex of the $(n-1)$-dimensional simplex $\Delta_{n-1}$ with $n$ points placed on faces of $\Delta_{n-1}$. More precisely, for each $j \in [n]$, decorate the face $F$ of $\Delta_n$ with a marking if the column $A_j$ lies in the interior of the cone of $H$ that is dual to the face $F$. 
\end{const}

\begin{lemma}\label{lem:cartoon-sliding}
	Let $C \subseteq \tropdet[n-1][n,n]$ be a maximal cone. Let $A \in C$ and $H$ be a tropical hyperplane such that each column of $A$ lies in the interior of a wing of $H$. Then the cartoon of $A$ with respect to $H$ can be obtained from the cartoon of $C$ by sliding markings from nodes of $C$ to incident edges.
\end{lemma}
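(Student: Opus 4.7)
The plan is to deduce this almost directly from \Cref{lem:cartoon-geometric-interpretation}, using the interior hypothesis to pin down which wing each column sits on. Recall that in the cartoon of $C$ the $j$-th marking is placed on the edge $v_{\sigma^{-1}(j)}v_{\pi^{-1}(j)}$ when $\sigma^{-1}(j)\neq\pi^{-1}(j)$, and on the node $v_{\sigma^{-1}(j)}$ when $\sigma^{-1}(j)=\pi^{-1}(j)$. The cartoon of $A$ with respect to $H$ places the $j$-th marking on the face of $\Delta_{n-1}$ dual to the (unique) cone of $H$ whose interior contains $A_j$; by hypothesis this cone is a wing, hence the face is an edge $e_k e_l$ of $\Delta_{n-1}$, which I identify with the edge $v_k v_l$ of $K_n$.

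First I will handle the edge case. If the $j$-th marking of $C$ is on the edge $v_{\sigma^{-1}(j)}v_{\pi^{-1}(j)}$, \Cref{lem:cartoon-geometric-interpretation} gives that $A_j$ lies on the wing $W_{\sigma^{-1}(j),\pi^{-1}(j)}$. Since $A_j$ lies in the interior of some wing, and distinct wings intersect only in lower-dimensional cones of $H$, this wing must be $W_{\sigma^{-1}(j),\pi^{-1}(j)}$ itself. Hence the cartoon of $A$ places its $j$-th marking on the same edge $v_{\sigma^{-1}(j)}v_{\pi^{-1}(j)}$, and no sliding is required.

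Next I will handle the node case. If the $j$-th marking of $C$ is on the node $v_{\sigma^{-1}(j)}$, \Cref{lem:cartoon-geometric-interpretation} gives that $A_j$ lies on a wing $W_{k,\sigma^{-1}(j)}$ for some $k\in[n]$; again the interior hypothesis selects a unique such $k$. Thus the cartoon of $A$ places its $j$-th marking on the edge $v_k v_{\sigma^{-1}(j)}$, which is incident to the node $v_{\sigma^{-1}(j)}$ carrying the original marking of $C$. This is precisely a sliding of the $j$-th marking from a node to an incident edge.

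Combining both cases column-by-column, the $n$ markings of the cartoon of $A$ are obtained from the $n$ markings of the cartoon of $C$ by (possibly trivially) sliding each node marking to an incident edge, as claimed. The only subtle point, and the only place where the interior hypothesis on $H$ is used, is the uniqueness of the wing containing $A_j$; this rules out ambiguity in the destination of each sliding and also guarantees that the edge-markings of $C$ translate to the same edges in the cartoon of $A$.
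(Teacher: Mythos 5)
Your proof is correct and follows essentially the same route as the paper's: invoke \Cref{lem:cartoon-geometric-interpretation} to locate each column on a wing, then observe that edge-markings stay put while node-markings slide to the incident edge determined by the wing. The extra remark about uniqueness of the wing (from the interior hypothesis) is a helpful clarification but not a departure from the paper's argument.
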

\begin{proof}
	By assumption, each column lies in the interior of a wing of $H$, so the cartoon of $A$ with respect to $H$ has only markings on edges of $\Delta_{n-1}$. If the cartoon of $C$ has a marked edge $v_{\sigma^{-1}(j)} v_{\pi^{-1}(j)}$, then \Cref{lem:cartoon-geometric-interpretation} implies that the column $A_j$ lies on the wing $W_{\sigma^{-1}(j), \pi^{-1}(j)}$ of $H$, and so the column $A_j$ marks the same edge in the cartoon of $A$ w.r.t. $H$. 
	If the cartoon of $C$ has a marked node $v_{\sigma^{-1}(j)}$, then \Cref{lem:cartoon-geometric-interpretation} implies that the column $A_j$ lies on some wing $W_{k,\sigma^{-1}(j)}, k \in [n]$ of $H$, and so the column $A_j$ marks the edge with vertices $v_k$ and $v_{\sigma^{-1}(j)}$ in the cartoon of $A$ w.r.t $H$.
\end{proof}

\begin{theorem}[Geometric triangle criterion]\label{th:geometric-triangle-criterion}
	Let $n=3,4$ and $C \subseteq \tropdet[n-1][n,n]$ be a maximal cone. Let $A \in C$ and $H$ be a tropical hyperplane such that each column of $A$ lies in the interior of a wing of $H$. Then the cartoon of $A$ with respect to $H$ has markings only on edges of $\Delta_{n-1}$, and $C$ is positive if and only if the cartoon of $A$ with respect to $H$ does not contain a marked triangle.
\end{theorem}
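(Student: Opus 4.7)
The plan is to reduce the geometric statement to the combinatorial criterion of \Cref{prop: triangle criterion} via \Cref{lem:cartoon-sliding}. The first conclusion---that the cartoon of $A$ with respect to $H$ has markings only on edges of $\Delta_{n-1}$---is immediate from \Cref{lem:cartoon-sliding}, since that lemma describes the cartoon of $A$ as obtained from the cartoon of $C$ by sliding each node-marking onto an incident edge while leaving every edge-marking in place. For the equivalence, \Cref{prop: triangle criterion} reduces the task to showing that the cartoon of $A$ has a marked triangle if and only if the cartoon of $C$ does.

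The plan is then to establish this combinatorial equivalence by case analysis on the cycle type of $\sigma\pi^{-1}$, following the classification used in the proof of \Cref{prop: triangle criterion}. In the $3$-cycle case, the marked triangle in the cartoon of $C$ is witnessed by three markings in the interiors of three edges of $K_n$; these markings survive sliding unchanged and continue to mark the same triangle in the cartoon of $A$. In the $4$-cycle case (only occurring for $n=4$), no markings lie on nodes, so sliding is trivial and the cartoons coincide. The substantive case is the transposition case, where the cartoon of $C$ has two markings in the interior of a single edge $e = v_av_b$ and the remaining $n-2$ markings at distinct vertices disjoint from $\{v_a, v_b\}$. After sliding, each of those $n-2$ markings lands on one specific edge incident to its original vertex and covers only that edge. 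A direct enumeration of the (at most four) triangles of $K_n$ for $n \in \{3,4\}$ then shows that for every candidate triangle, the two markings on $e$ together cover only $e$, and each slid marking can be matched to at most one further edge, which falls short of the three distinct markings required to cover the three edges of the triangle by a matching.

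The main obstacle is this last enumeration in the transposition case. It is combinatorially short but must be carried out carefully, respecting the matching requirement hidden in the definition of a marked triangle: two markings in the interior of the same edge cannot jointly cover two different edges of the triangle, and a slid marking no longer covers every edge incident to its former vertex. This is precisely the step that fails for $n \geq 5$, in agreement with \Cref{ex:triangle-crit-counterexample}, which explains why the geometric triangle criterion holds in exactly the same range as its combinatorial counterpart.
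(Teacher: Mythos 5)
Your proposal is correct and follows the same overall strategy as the paper: reduce to the combinatorial criterion (\Cref{prop: triangle criterion}) via the cartoon-sliding lemma (\Cref{lem:cartoon-sliding}) and then analyze by the cycle type of $\sigma\pi^{-1}$. The one efficiency you pass over is the observation the paper makes explicitly --- sliding a marking from a node onto an incident edge can only shrink the set of triangles it can participate in, so the marked triangles of the cartoon of $A$ form a subset of those of the cartoon of $C$ --- which disposes of the transposition and $4$-cycle cases automatically and reduces the check to verifying that the $3$-cycle marked triangle (whose markings already lie on edges) survives sliding, sparing the explicit transposition enumeration you carry out.
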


\begin{proof}
	By \Cref{prop: triangle criterion} (\hyperref[prop: triangle criterion]{Triangle criterion for cartoons}), $C$ is positive if and only if the cartoon of $C$ does not contain a marked triangle, i.e. a triangle with three distinct markings, where each edge contains at least one marking in its interior, or on an incident vertex. \Cref{lem:cartoon-sliding} implies that the cartoon of $A$ w.r.t $H$ can be obtained from the cartoon of $C$ by sliding the markings from nodes to edges. Hence, the set of marked triangles of the cartoon of $A$ w.r.t $H$ is a subset of the marked triangles of the cartoon of $C$. It thus remains to show that if the cartoon of $C$ contains a marked triangle, then so does the cartoon of $A$ w.r.t to $H$. For $n=3$, there is a unique such configuration (\Cref{subfig:cycle3}) and all markings of the cartoon of $C$ are already on edges. For $n=4$, there is also a unique such configuration (\Cref{subfig:cycle43}), and the markings of the marked triangle are on edges. Hence, this is also a marked triangle in the cartoon of $A$ w.r.t $H$. 
\end{proof}

\begin{example}[Geometric triangle criterion fails for $n\geq 5$]\label{ex:geometric-triangle-crit}
	Consider the (positive) cone from \Cref{ex:triangle-crit-counterexample}. The cartoon of cone $C$, which is depicted in \Cref{fig:constructionDiagram}, has a marked triangle. However, sliding the markings from nodes to edges yields the cartoon in \Cref{fig:example-geometric-triangle-crit}, which does not have a marked triangle. This example can be generalized to any $n\geq 5$.
	
	\begin{figure}
		\centering
		\begin{tikzpicture}[scale = 0.45]
			\node (1) at (4, 11) {};
			\node (2) at (1, 6) {};
			\node (3) at (9, 7) {};
			\node (4) at (7, 4.75) {};
			\node (5) at (4, 7.75) {};
			\draw[thick] (2.center) to (4.center);
			\draw[thick] (4.center) to (3.center);
			\draw[thick] (3.center) to (1.center);
			\draw[thick] (1.center) to (2.center);
			\draw[thick] (2.center) to (3.center);
			\draw[thick] (1.center) to (4.center);
			\draw[thick]  (5.center) to (1.center);
			\draw[thick]  (2.center) to (5.center);
			\draw[thick]  (5.center) to (4.center);
			\draw[thick]  (5.center) to (3.center);
			\filldraw (7.75, 5.5) circle (7pt);
			\filldraw (8.25, 6.25) circle (7pt);
			\filldraw (6,9.4) circle (7pt);
			\filldraw (3,5.6) circle (7 pt);
			\filldraw (5, 7.6) circle (7pt);
		\end{tikzpicture}
		\caption{The cartoon from \Cref{ex:geometric-triangle-crit}.}
		\label{fig:example-geometric-triangle-crit}
	\end{figure}
	
\end{example}

\subsection{Extension to all orthants}\label{sec:extension-to-all-orthants}

In this section, we want to exploit the observation made in \Cref{sec:signed-tropical-generators} in order to understand the signed tropicalizations of the variety $\tropdet[n-1][n,n]$ with respect to sign patterns beyond the positive orthant. 
Hence, we fix a sign matrix $\sign \in \{-1,1\}^{(n \times n)}$. 
Recall from \Cref{cor: positive iff different signs of permutations} that a maximal cone of $\tropdet[n-1][n,n] = \trop(V(\det))$ is positive if and only if the permutation $\sigma \pi^{-1}$ for the corresponding edge $\conv(\sigma, \pi)$ is an even cycle. 
Therefore, we can interpret the partition of the maximal cones in positive and non-positive cones as a coloring of the edges of the graph $\BirG_n$ of the Birkhoff polytope $\Birk_n$.
We color the edges dual to positive cones in green ("positive edges"), and the remaining ones in red ("non-positive edges").

The Newton polytopes of $\det$ and $\det^\sign$ agree.
Hence, for each sign pattern $\sign$ we obtain a $2$-coloring of the edges of $\BirG_n$, corresponding to the (non-)positivity of the maximal cones of $ \trop(V(\det^\sign))$.
Then, the green edges correspond to maximal cones of $\trop^\sign(V(\det))$, i.e. the tropicalization of $(n\times n)$-matrices of rank $n-1$ in $\C^\sign$.
We begin by investigating the $2$-coloring for $s = (1)_{ij}$, i.e. the 
coloring given by $\tropcplus(V(\det))$.

\begin{lemma} \label{lemma: sign flips start configuration}
  The $2$-coloring of $\BirG_n$ given by $\tropcplus(V(\det))$ has exactly $2$ connected components formed by red edges. 
  The vertices in one component correspond to the alternating group $A_n \subseteq S_n$, the even permutations of $S_n$.
  The vertices in the other component correspond to the odd permutations $S_n \setminus A_n$.
  Furthermore, the induced subgraphs on $A_n$ and $S_n \setminus A_n$ only have red edges and the green edges are exactly the edges in the cut $(A_n, S_n \setminus A_n)$. 
\end{lemma}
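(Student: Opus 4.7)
The plan is to first turn the coloring criterion into a statement about the multiplication operation $\sigma\pi^{-1}$, and then use the classical fact that $A_n$ is generated by $3$-cycles to establish connectivity of the two color classes.

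First I would unpack the coloring. Two permutations $\sigma,\pi$ span an edge of $\BirG_n$ precisely when $\sigma\pi^{-1}$ is a cycle, and by \Cref{cor: positive iff different signs of permutations} the edge is green iff $\sgn(\sigma)\neq\sgn(\pi)$, equivalently iff $\sigma\pi^{-1}$ is an odd permutation, i.e.\ an even-length cycle. Consequently, every green edge has its endpoints in different sign classes, so it lies in the cut $(A_n, S_n\setminus A_n)$; conversely, every edge of $\BirG_n$ crossing this cut has $\sigma\pi^{-1}$ of sign $-1$, hence an even-length cycle, and is therefore green. This proves the last sentence of the lemma and also shows that red edges stay within $A_n$ or within $S_n\setminus A_n$.

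Next I would show that the induced subgraph on $A_n$ is connected in the red subgraph. For any $3$-cycle $c$ and any $\sigma\in S_n$, the pair $\{\sigma, c\sigma\}$ is an edge of $\BirG_n$ since $(c\sigma)\sigma^{-1}=c$ is a cycle, and this edge is red because $\sgn(c)=+1$. Since $A_n$ is generated by $3$-cycles, any $\sigma\in A_n$ can be written as $c_k\cdots c_1$, and the path
\[
e,\ c_1,\ c_2c_1,\ \ldots,\ c_k\cdots c_1=\sigma
\]
consists entirely of red edges inside $A_n$. Hence the red subgraph on $A_n$ is connected. For the odd coset, fix any transposition $\tau_0$; right multiplication by $\tau_0$ is a bijection $A_n\to S_n\setminus A_n$ that preserves the relation ``$\sigma\pi^{-1}$ is a $k$-cycle'' and therefore both the edge set of $\BirG_n$ and its red/green coloring. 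The connected red path from $e$ to $\sigma\tau_0^{-1}$ in $A_n$ transports to a red path from $\tau_0$ to $\sigma$ in $S_n\setminus A_n$, so the induced subgraph on $S_n\setminus A_n$ is connected as well.

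Putting the pieces together: the red subgraph has no edges across the bipartition $(A_n, S_n\setminus A_n)$, yet each side is internally connected through red edges, so it has exactly two connected components, as claimed. The only step that requires more than bookkeeping is the connectivity of $A_n$, and that rests entirely on the generation of $A_n$ by $3$-cycles, which I would simply quote.
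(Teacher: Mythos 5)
Your proof is correct and follows essentially the same route as the paper: use \Cref{cor: positive iff different signs of permutations} to identify red edges with same-sign pairs, use generation of $A_n$ by $3$-cycles to show red connectivity within $A_n$, and transfer connectivity to the odd coset by multiplication by a transposition. The only cosmetic difference is that you multiply by the $3$-cycle on the left (so $\sigma\pi^{-1}$ is literally $c^{-1}$), whereas the paper multiplies on the right (so $\sigma\pi^{-1}$ is a conjugate of $c^{-1}$); both give a $3$-cycle and hence an edge.
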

\begin{proof}
  We identify the vertices in $\BirG_n$ with the permutations in $\Sym_n$ so that the edge set is given by the pairs $\SetOf{(\sigma,\pi)}{\sigma \pi^{-1} \text{ is a cycle} }$. 
	Let $\sigma \in A_n,$ and $c \in A_n$ be a $3$-cycle, and consider $\pi = \sigma c$.
	Then $\pi \in A_n$, and so $\pi$ is a neighbor of $\sigma$ in $\BirG_n$.
	The permutations $\sigma$ and $\pi$ have equal sign, so by \Cref{cor: positive iff different signs of permutations} the edge $(\sigma, \pi)$ is colored in red.
	Since the alternating group $A_n$ is generated by $3$-cycles, it follows that all permutations $\pi \in A_n$ are contained in one red connected component.
	All remaining vertices are in $S_n \setminus A_n$. Note that if 
	$\tau$ is a transposition, then
	$S_n \setminus A_n = \tau A_n$, and that all edges inside $\tau A_n$ are red.
	Finally, permutations in $\tau A_n$ have negative sign, so all edges between $A_n$ and $\tau A_n$ are green.
\end{proof}

\begin{prop}\label{lem:sign-flipping}
	Let $\sign \in \{-1,1\}^{(n \times n)}$.
	The graph of $\trop^\sign(V(f))$ has $2$ red connected components, which partition the vertices into $2$ parts.
	Equivalently, the green edges are the edges of a cut.
\end{prop}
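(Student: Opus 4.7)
My plan is to reduce the proposition to a combinatorial statement about the sign function $\epsilon_\sign : S_n \to \{\pm 1\}$ defined by $\epsilon_\sign(\sigma) := \sgn(\sigma)\prod_i s_{i\sigma(i)}$, and then prove the connectivity claim by induction on the Hamming distance of $\sign$ from the all-ones pattern.

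First, the substitution $x_{ij}\mapsto s_{ij}y_{ij}$ discussed before \Cref{prop:singed-tropical-generators-hypersurface} identifies $\trop^\sign(V(\det))$ with $\trop^+(V(\det^\sign))$, where $\det^\sign = \sum_{\sigma\in S_n}\epsilon_\sign(\sigma)\prod_i x_{i\sigma(i)}$. Since $\det^\sign$ has the same Newton polytope $\Birk_n$ as $\det$, its tropical hypersurface has the same maximal cones as $\trop(V(\det))$, still indexed by edges of $\BirG_n$; only the labelling of each cone as positive or not changes with $\sign$. Applying \Cref{cor: positive iff different signs of permutations} to $\det^\sign$, an edge $(\sigma,\pi)$ of $\BirG_n$ is green precisely when $\epsilon_\sign(\sigma)\neq\epsilon_\sign(\pi)$. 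Setting $U := \epsilon_\sign^{-1}(+1)$ and $V := \epsilon_\sign^{-1}(-1)$, the green edges are exactly those of the cut $(U,V)$, which proves the ``equivalently'' assertion of the proposition.

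For the connectivity claim, I would induct on the number $k$ of $-1$-entries of $\sign$. The base case $k=0$ is \Cref{lemma: sign flips start configuration}. For the inductive step, suppose the claim holds for $\sign$ and let $\sign'$ be obtained by flipping a single entry $s_{i_0 j_0}$. Setting $T := \{\sigma\in S_n : \sigma(i_0)=j_0\}$, a direct computation shows $\epsilon_{\sign'}=-\epsilon_\sign$ on $T$ and $\epsilon_{\sign'}=\epsilon_\sign$ on $T^c$, so the new color classes are $U'=(U\setminus T)\cup(V\cap T)$ and $V'=(V\setminus T)\cup(U\cap T)$. Correspondingly, an edge of $\BirG_n$ retains its color under the flip iff both of its endpoints lie in $T$ or both lie in $T^c$, and reverses color iff exactly one endpoint lies in $T$.

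The main obstacle will be verifying that $U'$ and $V'$ remain connected in the red subgraph of $\BirG_n$. The key structural observation is that the subgraph of $\BirG_n$ induced on $T$ is canonically isomorphic to $\BirG_{n-1}$: any two $\sigma,\pi\in T$ share the assignment $i_0\mapsto j_0$, so $\sigma\pi^{-1}$ fixes $j_0$ and is a cycle in $S_n$ iff its restriction to $[n]\setminus\{j_0\}$ is a cycle. Under the resulting bijection $T\cong S_{n-1}$, the restriction $\epsilon_\sign|_T$ coincides, up to the overall sign $s_{i_0 j_0}$, with $\epsilon_{\tilde\sign}$ for the $(n-1)\times(n-1)$ submatrix $\tilde\sign$ of $\sign$ obtained by deleting row $i_0$ and column $j_0$. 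This allows a secondary induction on $n$ to control connectivity within $T$, and, by the same argument applied to each stratum $\{\sigma : \sigma(i_0)=j\}$ for $j\neq j_0$, also within $T^c$. The delicate remaining step is to glue $V\cap T$ to $U\setminus T$ inside $U'$ (and symmetrically for $V'$) using the newly-red cross-edges, i.e.\ edges between $T$ and $T^c$ that were green under $\sign$. This requires a case analysis exploiting the abundance of cycle-edges in $\BirG_n$; extra care will be needed when one of the pieces $U\cap T$, $V\cap T$, $U\setminus T$, $V\setminus T$ is empty, in which case the ``two components'' conclusion has to be read with one component possibly empty.
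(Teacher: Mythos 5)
Your strategy coincides with the paper's: reduce via the sign function $\epsilon_\sign(\sigma)=\sgn(\sigma)\prod_i s_{i\sigma(i)}$ to a $2$-coloring of $\BirG_n$, derive the cut description from \Cref{cor: positive iff different signs of permutations}, and induct on the number of $-1$ entries of $\sign$ using a four-way partition of $S_n$ by membership in $T=\{\sigma:\sigma(i_0)=j_0\}$. In the paper's notation your $U\cap T,\ V\cap T,\ U\setminus T,\ V\setminus T$ are exactly $A^=,B^=,A^\neq,B^\neq$, and the recoloring rule you derive is the one depicted in their auxiliary four-node graph. Your ``equivalently'' paragraph is complete and correct.

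What remains a genuine gap in your submission is the connectivity of each new colour class. You correctly observe that it is not automatic that $U'=(U\setminus T)\cup(V\cap T)$ induces a connected subgraph of $\BirG_n$, and you only outline a plan to prove it: a secondary induction on $n$ using $\BirG_n|_T\cong\BirG_{n-1}$, together with a gluing step across the strata $\{\sigma:\sigma(i_0)=j\}$ that you explicitly flag as delicate and leave undone. The plan itself is plausible — in particular your claim that $\epsilon_\sign|_T$ agrees with $\epsilon_{\tilde\sign}$ up to a global constant checks out, the constant being $(-1)^{i_0+j_0}s_{i_0j_0}$ once one compares $\sgn(\sigma)$ with the sign of its restriction — but it is not carried out, and the gluing is exactly where the content lies. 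It is only fair to add that the published proof is terse at precisely this point: the four-node picture establishes that the green edges after the flip still form a cut, but the connectivity of $A^=\sqcup B^\neq$ and $A^\neq\sqcup B^=$ inside $\BirG_n$ is asserted rather than argued; only the base case \Cref{lemma: sign flips start configuration} proves connectivity explicitly, via generation of $A_n$ by $3$-cycles. So the worry you raise is well-placed, and you have located exactly the step that needs more work — but to turn your proposal into a proof you must actually finish that step, either along the lines you sketch or by adapting the cycle-generation argument to a general class $\epsilon_\sign^{-1}(\pm1)$.
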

\begin{proof}
	By the discussion above, we are interested in the coloring of the graph $\BirG_n$ given by the positive cones of $\tropcplus(V(\det^\sign))$.
	If $s = (1)_{ij}$, then the claim holds by \Cref{lemma: sign flips start configuration}.
	Fix $(k,\ell) \in [n]\times[n]$.
	We show that if the claim holds for a fixed sign pattern $\sign \in \{-1,1\}^{(n \times n)}$, then it also holds for the sign pattern $s'$, where $s'_{k\ell} = - s_{k\ell}$ and and $s_{ij} = s'_{ij}$ for all other entries.
	That is, we show that the property is preserved under flipping the sign of the $(k,\ell)$th entry.
	Let $(A,B)$ be the partition of vertices of the coloring induced by $\det^s$.
	Note that 
	\[
			{\det}^\sign = \sum_{\sigma \in S_n}  \left( \sgn(\sigma) \prod_{i=1}^{n} s_{i \sigma(i)} x_{i\sigma(i)} \right),
	\]
	so an edge $(\sigma, \pi)$ is red if and only if
        \begin{equation*}
          \sgn(\sigma) \prod_{i=1}^{n} s_{i \sigma(i)} = \sgn(\pi) \prod_{i=1}^{n} s_{i \pi(i)}. 
        \end{equation*}
	Flipping the sign at $(k,\ell)$ thus switches the color of all edges $\conv(\sigma, \pi)$ where there exists an $i' \in [n]$ such that $(i',  \sigma(i')) = (k,\ell)$ and $(i, \pi(i)) \neq (k, \ell)$ for all $i \in [n]$ (or if there exists an $i'' \in [n]$ such that $(i'', \pi(i')) = (k,\ell)$ and $(i , \sigma(i)) \neq (k ,\ell)$ for all $i \in [n]$). Equivalently, flipping the sign at $(k,\ell)$ switches the color of all edges
	where $\sigma(k)=\ell$ and $\pi(k)\neq \ell$ (or $\sigma(l)\neq \ell$ and $\pi(k)= \ell$).
	Hence, we partition $A$ into
        $A^= = \{\sigma \in A \mid \sigma(k) = \ell\}, A^{\neq} = A \setminus A^=$ 
        and similarly $B = B^= \sqcup B^{\neq}$.
	We then flip the colors of all edges between $(A^=,A^{\neq})$, $(A^=,B^{\neq}),$ $(B^=,A^{\neq}),$ $(B^=,B^{\neq})$, as shown in \Cref{fig: proof of sign flips}. The resulting graph has red components $A^= \sqcup B^{\neq}$ and $A^{\neq} \sqcup B^=$.
\end{proof}
\begin{figure}
	\centering
	\begin{tikzpicture}
		\node at (0,0) {$B^=$};
		\node at (0,2) {$A^=$};
		\node at (2,0) {$B^{\neq}$};
		\node at (2,2) {$A^{\neq}$};
		\draw[color=red, very thick] (0,0) circle (1em);
		\draw[color=red, very thick] (2,0) circle (1em);
		\draw[color=red, very thick] (0,2) circle (1em);
		\draw[color=red, very thick] (2,2) circle (1em);
		\draw[very thick, color=red] (0.5,0) -- (1.5,0);
		\draw[very thick, color=red] (0.5,2) -- (1.5,2);
		\draw[very thick, color = DarkGreen, dashed] (0,0.5) -- (0,1.5);
		\draw[very thick, color = DarkGreen, dashed] (2,0.5) -- (2,1.5);
		\draw[very thick, color = DarkGreen, dashed] (0.35, 0.35) -- (1.65,1.65); 
		\draw[very thick, color = DarkGreen, dashed] (1.65, 0.35) -- (0.35,1.65); 
	\end{tikzpicture}
	\hspace{2cm}
	\begin{tikzpicture}
		\node at (0,0) {$B^=$};
		\node at (0,2) {$A^=$};
		\node at (2,0) {$B^{\neq}$};
		\node at (2,2) {$A^{\neq}$};
		\draw[color=red, very thick] (0,0) circle (1em);
		\draw[color=red, very thick] (2,0) circle (1em);
		\draw[color=red, very thick] (0,2) circle (1em);
		\draw[color=red, very thick] (2,2) circle (1em);
		\draw[very thick, color = DarkGreen, dashed] (0.5,0) -- (1.5,0);
		\draw[very thick, color = DarkGreen, dashed] (0.5,2) -- (1.5,2);
		\draw[very thick, color = DarkGreen, dashed] (0,0.5) -- (0,1.5);
		\draw[very thick, color = DarkGreen, dashed] (2,0.5) -- (2,1.5);
		\draw[very thick, color=red] (0.35, 0.35) -- (1.65,1.65); 
		\draw[very thick, color=red] (1.65, 0.35) -- (0.35,1.65); 
	\end{tikzpicture}
\caption{
	Auxiliary graphs for the $2$-coloring for sign patterns $s$ (left) and $s'$ (right) after the sign flip of a single entry.
	Within the sets $A^=, A^{\neq}, B^=, B^{\neq} $ all edges are red.
	The color of the edge between two parts in the auxiliary graph represents the color of all edges in $\BirG_n$ between the parts.
	Green edges are dashed.}
\label{fig: proof of sign flips}
\end{figure}
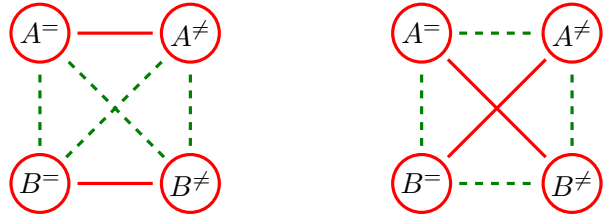

The above statement implies that the elements in the set $\{\trop^\sign(f) \mid s \in \{-1,1\}^{(n \times n)}\}$ correspond to certain cuts in the graph $\BirG_n$.

\begin{question}
	Is there a group theoretical interpretation of the $2^{n^2}$ partitions given by the cuts for every sign pattern? 
\end{question}

\section{Determinantal prevarieties and bipartite graphs}
\label{sec:labels}

Let $\det$ be the polynomial representing the determinant of a $(n\times n)$-matrix. Recall from \Cref{sec:Birkhoff-polytope} that $\trop(V(\det))$ is the codimension-$1$ skeleton of the normal fan of the Birkhoff polytope $B_n$. In \cite{paffenholz15_facesbirkhoffpolytopes} the faces of $B_n$ are identified with \emph{face graphs}, which are unions of perfect matchings on the bipartite graph on vertices $[n]\sqcup [n]$.

\begin{const}[Face graphs \cite{paffenholz15_facesbirkhoffpolytopes}]
	Let $C \subseteq \trop(V(\det))=\tropdet[n-1][n,n]$ be a cone in the tropical hypersurface, and let $\Lambda \subseteq S_{n}$ such that $\conv(\Lambda)$ is the face of $B_{n}$ dual to $C$. We associate the bipartite graph $\G$ on vertices $V(\G) = R\sqcup G$, $R = \{r_1, \dots, r_{n} \}, G = \{g_1, \dots, g_{n}\}$ and edges
	\[
	E(\G) = \{r_i g_j \mid \sigma(i) = j \text{ for some } \sigma \in \Lambda\}.
	\]
\end{const}

This extends to a labeling of the entire normal fan of $B_{n}$, where the label of the normal cone of a vertex $\sigma$ is a perfect matching with edges $(r_i, g_{\sigma(i)}), i \in [n]$. The label of a cone dual to a face $F$ is the union of all labels of normal cones of vertices contained in $F$. Thus, such a label is a union of perfect matchings.

\begin{prop}[Triangle criterion for bipartite graphs]\label{prop:label-triangle-crit}
	Let $C \subseteq \tropdet[n-1][n,n]$ be a maximal cone. 
	Then $\G$ consists of a cycle of length $2l$, and a perfect matching of the remaining $2(n-l)$ vertices. $C$ positive if and only if $l$ is even.
\end{prop}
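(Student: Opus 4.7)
The plan is to unpack the face-graph construction for a maximal cone, reveal the cycle/matching structure directly from the combinatorics of permutations, and then read off positivity from \Cref{cor: positive iff different signs of permutations}.

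Since $C$ is a maximal cone of $\tropdet[n-1][n,n]$, it is dual to an edge $\conv(\sigma,\pi)$ of the Birkhoff polytope $\Birk_n$, and by the edge characterization recalled in \Cref{sec:Birkhoff-polytope}, the permutation $\tau=\sigma\pi^{-1}$ is a single cycle in $S_n$. Write $\tau=(j_1,j_2,\dots,j_\ell)$, so that $\tau(j_k)=j_{k+1}$ (indices mod $\ell$), and let $I=\{j_1,\dots,j_\ell\}$. Outside $I$ the permutations $\sigma$ and $\pi$ agree: for every $j\notin I$ we have $\sigma^{-1}(j)=\pi^{-1}(j)$. By construction, the face graph $\G$ is the union of the two perfect matchings $M_\sigma=\{r_i g_{\sigma(i)}:i\in[n]\}$ and $M_\pi=\{r_i g_{\pi(i)}:i\in[n]\}$.

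First I would analyze the agreement part. For each $j\notin I$, the common preimage $i=\sigma^{-1}(j)=\pi^{-1}(j)$ contributes the single edge $r_i g_j$ to both $M_\sigma$ and $M_\pi$, hence a single edge to $\G$. These $n-\ell$ edges use pairwise distinct vertices on each side, so they form a perfect matching on the $2(n-\ell)$ vertices $\{r_{\sigma^{-1}(j)}:j\notin I\}\cup\{g_j:j\notin I\}$. Next I would analyze the disagreement part: setting $i_k=\pi^{-1}(j_k)$, one has $\pi(i_k)=j_k$ and $\sigma(i_k)=\tau(j_k)=j_{k+1}$. Thus $M_\pi$ contributes the edges $r_{i_k}g_{j_k}$ while $M_\sigma$ contributes the edges $r_{i_k}g_{j_{k+1}}$, for $k=1,\dots,\ell$. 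Walking along
\[
g_{j_1}\;\xrightarrow{M_\pi}\;r_{i_1}\;\xrightarrow{M_\sigma}\;g_{j_2}\;\xrightarrow{M_\pi}\;r_{i_2}\;\xrightarrow{M_\sigma}\;\cdots\;\xrightarrow{M_\sigma}\;g_{j_1}
\]
yields an alternating cycle of length $2\ell$ on the vertices $\{r_{i_1},\dots,r_{i_\ell},g_{j_1},\dots,g_{j_\ell}\}$, which is vertex-disjoint from the matching part. This proves the structural claim with $l=\ell$.

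Finally, positivity follows from \Cref{cor: positive iff different signs of permutations}, which tells us $C$ is positive if and only if $\sgn(\sigma)\neq\sgn(\pi)$, i.e.\ $\sgn(\tau)=-1$. Since $\tau$ is an $\ell$-cycle, $\sgn(\tau)=(-1)^{\ell-1}$, so $\sgn(\tau)=-1$ if and only if $\ell=l$ is even, completing the proof.

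The only nontrivial step is the explicit identification of the disagreement edges as a single $2\ell$-cycle rather than, say, a disjoint union of smaller even cycles; this is precisely where the hypothesis that $\sigma\pi^{-1}$ is a \emph{single} cycle (coming from maximality of $C$) is used.
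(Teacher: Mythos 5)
Your proof is correct and follows essentially the same route as the paper's: decompose $\G$ as the union of the perfect matchings $M_\sigma$ and $M_\pi$, split into an agreement part (giving the isolated-edge matching) and a disagreement part (giving the alternating $2\ell$-cycle), and read off positivity from \Cref{cor: positive iff different signs of permutations} via the sign of the cycle $\sigma\pi^{-1}$. Your bookkeeping with $i_k=\pi^{-1}(j_k)$ is in fact a bit cleaner than the paper's, which states the relation as $\sigma(i_k)=\pi(i_{k+1})$ (a formulation that actually tracks the conjugate cycle $\pi^{-1}\sigma$ rather than $\sigma\pi^{-1}$, though this is harmless since conjugate permutations share cycle type and sign).
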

\begin{proof}
	Let $C$ be a maximal cone and $\conv(\sigma, \pi)$ be the edge of $B_n$ dual to $C$. 
	The bipartite graph $\G$ is a union of $2$ perfect matchings, corresponding to $\sigma$ and $\pi$. Since these permutations form an edge on $B_n$, we have that $\sigma \pi^{-1}$ is a cycle of length $l$, i.e. there are elements $r_{i_1},\dots r_{i_l} \in [n]$ such that $\sigma \pi^{-1} (i_k) = i_{k+1}$ (and $i_{l+1} = i_1$). Equivalently, $\sigma(i_k) = \pi(i_{k+1})$ and $\sigma(i_{k-1}) = \pi(i_k)$. For all other elements $j \in [n]$ holds $\sigma(j)=\pi(j)$.
	Thus, $\G$ consists of isolated edges $(i,\sigma(i))$ (forming a perfect matching) and a cycle $(r_{i_1}, g_{\sigma(i_1)}), (g_{\pi(i_2)}, r_{i_2}), (r_{i_2}, g_{\sigma(i_2)}), \dots, (g_{\pi(i_l)}, r_{i_l}), (r_{i_l}, g_{\sigma(i_l)}), (g_{\pi(i_{l+1})}, r_{i_1})$. Therefore, $\G$ consists of a cycle of length $2l$ and isolated edges. By \Cref{cor: positive iff different signs of permutations}, the cone $C$ is positive if and only if $\sgn(\sigma \pi^{-1}) = -1$, and equivalently the length $l$ of the cycle $\sigma \pi^{-1}$ is even.
\end{proof}

We extend the idea of face graphs as labels of cones of $\tropdet[n-1][n,n]= \prevar[n-1][n,n]$ by embedding these face graphs, for each $I$ and $J$, in a bipartite graph $\G$ on vertices $[d]\sqcup [n]$. This yields a label $\G$ of cones in the tropical determinantal prevariety $\prevar$.

\begin{definition}
	Let $I = \{i_1, \dots, i_{r+1}\} \in \binom{[d]}{r+1}, \ J = \{j_1, \dots, j_{r+1}\} \in \binom{[n]}{r+1}$ where $i_k < i_{k+1}, j_{k} < j_{k+1}$, and let $\sigma \in S_{r+1}$ be a permutation $\sigma: [r+1] \to [r+1]$. In the following, sets $I$ and $J$ are always of this form. We define the \emph{embedded permutation} to be the map
	\begin{align*}
		\sigma^{IJ}: I &\longrightarrow J \\
		i_k &\longmapsto j_{\sigma(k)}.
	\end{align*}
	The \emph{embedded Birkhoff polytope} $B_{r+1}^{IJ} \subseteq \RR^{d \times n}$ is the convex hull of the permutation matrices of the embedded permutations $\sigma^{IJ}, \sigma \in S_{r+1}$, where in this embedding, for each $ij \not \in I\times J$ we set the $ij$-th entry of each matrix in $B_{r+1}^{IJ}$ to zero.  
\end{definition}

Recall from \Cref{sec:determantal-varieties} that $\tropdet \subseteq \prevar = \bigcap_{f \in I_r} \trop(V(f))$, where $f \in I_r$ ranges over all $(r+1)\times(r+1)$-minors of a $(d\times n)$-matrix. More precisely, the ideal $I_r$ is generated by polynomials
\[
f^{IJ} = \sum_{\sigma \in S_{r+1}} \sgn(\sigma) \prod_{k=1}^{r+1} x_{i_k j_{\sigma(k)}} =  \sum_{\sigma \in S_{r+1}} \sgn(\sigma) \prod_{k=1}^{r+1} x_{i_k \sigma^{IJ}(i_k)}.
\]
Thus, a cone $C^{IJ} \subseteq \trop(V(f^{IJ}))$ can be seen as cone in the normal fan of $\Bembedded$.

\begin{const}[Labels of cones in $\prevar$]\label{const:bipartite-labels}
	Let $C \subseteq \prevar$ be a cone in the tropical determinantal prevariety. Then for each $I,J$ there exists a unique inclusion-minimal cone $C^{IJ} \in \trop(V(f^{IJ}))$ such that 
	$
	C = \bigcap_{I,J} C^{IJ}.
	$
	Let $\Lambda(I,J) \subseteq \{\sigma^{IJ} \mid \sigma \in S_{r+1}\}$ such that $\conv(\Lambda(I,J))$ is the face of $\Bembedded$ dual to $C^{IJ}$. Let $R = \{r_1, \dots, r_d\}$ and $G = \{g_1, \dots, g_n\}$. $R$ corresponds to row indices of matrices in $\prevar$, and $G$ corresponds to column indices. To $C$ we associate the bipartite graph $\G[C]$ on vertices $V(\G[C]) = R \sqcup G$ and edges
	\begin{align*}
		E(\G[C]) = \bigcup_{I,J} \left\{r_{i_k} g_{j_l} \mid \sigma^{IJ}(i_k) = j_l \text{ for some } \sigma^{IJ} \in  \Lambda(I,J) , l,k \in [r+1] \right\} .
	\end{align*}
\end{const}

\begin{definition}
	Let $\Gamma$ be a bipartite graph on vertices $V(\Gamma)= R \sqcup G$. The \emph{bipartite complement} $\Gamma^c$ is the bipartite graph on vertices $V(\Gamma)=V(\Gamma^c)$ and edges
	\[
	E(\Gamma^c) = \{r_i g_j \mid r_i \in R, g_j \in G, r_i g_j \not \in E(\Gamma)\}.
	\]
\end{definition}

\begin{example}[Label of a cone in {$\tropdet[2][3,4]$}]\label{ex:label}
	Let $d = 3, n=4$ and $r = 2$. Consider the cone $C \subseteq \prevar[2][3,4]= \tropdet[2][3,4]$
	with rays
	\[
	C = \cone(E_{11}, E_{22}, E_{33}, E_{34}).
	\]
	Then $C = \bigcap_{I,J} C^{IJ}$, where
	$I = [3]$ and for 
	$J = \{1,2,3\}, \{1,24\}$ the cone $C^{IJ}$ is dual to the edge $\conv((1,2,3)^{IJ}, (1,3,2)^{IJ})$ of $B_{3}^{IJ}$, while for $J = \{1,3,4\}, \{2,3,4\}$ the cone $C^{IJ}$ is dual to the edge $\conv((1,2,3)^{IJ}, (1,3)^{IJ})$. Thus, the label $\G$ is the bipartite complement of the graph with edges $r_1 g_1, r_2 g_2, r_3 g_3$ and $r_3 g_4$, as shown in \Cref{fig:example-label}.
	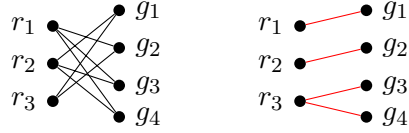
\begin{figure}
		\centering
		\begin{tikzpicture}
			[
			vertex/.style={circle, fill, minimum size=1em, scale=0.4},
			]
			\node[vertex,label=left:{$r_1$}] (r1) at (0,1) {};
			\node[vertex,label=left:{$r_2$}] (r2) at (0,.5) {};
			\node[vertex,label=left:{$r_3$}] (r3) at (0,0) {};
			\node[vertex, above right = 0.25em and 2em of r1, label=right:{$g_1$}] (c1) {};
			\node[vertex, above right = 0.25em and 2em of r2, label=right:{$g_2$}] (c2) {};
			\node[vertex,  above right = 0.25em and 2em of r3, label=right:{$g_3$}] (c3) {};
			\node[vertex,  below right = 0.25em and 2em of r3, label=right:{$g_4$}] (c4) {};
			\draw (r1) -- (c2) -- (r3) -- (c1) -- (r2) -- (c3) -- (r1);
			\draw (r1) -- (c4) -- (r2);
		\end{tikzpicture}
		\hspace*{2em}
		\begin{tikzpicture}
			[
			vertex/.style={circle, fill, minimum size=1em, scale=0.4},
			]
			\node[vertex,label=left:{$r_1$}] (r1) at (0,1) {};
			\node[vertex,label=left:{$r_2$}] (r2) at (0,.5) {};
			\node[vertex,label=left:{$r_3$}] (r3) at (0,0) {};
			\node[vertex, above right = 0.25em and 2em of r1, label=right:{$g_1$}] (c1) {};
			\node[vertex, above right = 0.25em and 2em of r2, label=right:{$g_2$}] (c2) {};
			\node[vertex,  above right = 0.25em and 2em of r3, label=right:{$g_3$}] (c3) {};
			\node[vertex,  below right = 0.25em and 2em of r3, label=right:{$g_4$}] (c4) {};
			\draw[color=red] (c1) -- (r1);
			\draw[color=red] (c3) -- (r3) -- (c4);
			\draw[color=red] (c2) -- (r2);
		\end{tikzpicture}
		\caption{The label $\G$ of the cone in \Cref{ex:label} (left) and the bipartite complement $\Gc$ (right).}
		\label{fig:example-label}
	\end{figure}
		\end{example}
	
	\begin{theorem}\label{prop:labels-positive-subgraphs}
		If $C \subseteq \prevarplus[r]$, then each induced subgraph on vertices $I \subseteq \binom{[d]}{r+1}, J \subseteq \binom{[n]}{r+1}$ contains a subgraph consisting of a cycle of length $2l$, where $l=l(I,J) \in \N$ and a perfect matching of the remaining $2(r+1-l)$ vertices. If $C$ is positive, then for each $I,J$ the length of the cycle $l=l(I,J)$ is even.
	\end{theorem}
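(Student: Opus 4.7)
The plan is to reduce to the hypersurface analysis of \Cref{sec:Birkhoff-polytope} minor by minor, using the labels assigned by \Cref{const:bipartite-labels}. For each pair $(I,J)$, the inclusion $C\subseteq\prevarplus[r]$ yields an inclusion-minimal cone $C^{IJ}\subseteq\tropcplus(V(f^{IJ}))$ containing $C$ and a face $F^{IJ}=\conv(\Lambda(I,J))$ of $\Bembedded$ dual to $C^{IJ}$. Combinatorially $\Bembedded$ agrees with $B_{r+1}$, so the edge criterion for the Birkhoff polytope and the argument of \Cref{prop:label-triangle-crit} apply verbatim.

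First I would establish the structural claim. Since $C^{IJ}\subseteq\trop(V(f^{IJ}))$, the initial form $\initial_A(f^{IJ})$ on $\interior(C^{IJ})$ is not a monomial, so $F^{IJ}$ has at least two vertices and hence at least one edge. Picking any edge $\conv(\sigma_1^{IJ},\sigma_2^{IJ})$ and running the analysis from the proof of \Cref{prop:label-triangle-crit} on the union of the two corresponding perfect matchings shows that these edges form a cycle of length $2l$, where $l$ is the length of the cycle $\sigma_1\sigma_2^{-1}\in S_{r+1}$, together with a perfect matching of the remaining $2(r+1-l)$ vertices. Because all these edges lie in $\Lambda(I,J)\subseteq E(\G[C])$, they sit inside the induced subgraph of $\G[C]$ on $\{r_i : i\in I\}\sqcup\{g_j : j\in J\}$.

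For the evenness claim, I use that $C^{IJ}$ is positive. By \Cref{proposition:speyer-williams-positivity} the initial form $\initial_A(f^{IJ})$ must have monomials of opposite sign, so, as in \Cref{rem:maximal-cones-positive-corank-1} applied to $f^{IJ}$ and $\Bembedded$ in place of $\det$ and $B_n$, connectivity of the edge graph of $F^{IJ}$ forces the existence of an edge $\conv(\sigma_1^{IJ},\sigma_2^{IJ})$ whose two endpoints correspond to permutations of opposite sign. Choosing \emph{this} particular edge in the construction above and invoking \Cref{cor: positive iff different signs of permutations} gives $\sgn(\sigma_1\sigma_2^{-1})=-1$; since $\sigma_1\sigma_2^{-1}$ is a cycle of length $l$ with sign $(-1)^{l-1}$, this forces $l$ to be even, as required.

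The main point requiring care is simply the bookkeeping: checking that the hypersurface tools from \Cref{sec:Birkhoff-polytope}, originally phrased for the full square determinant, transfer without change to each minor $f^{IJ}$ and its Newton polytope $\Bembedded$. Once that is verified, both assertions follow from a suitable choice of edge of $F^{IJ}$ and reading off the corresponding bipartite subgraph it contributes to $\G[C]$.
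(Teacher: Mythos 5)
Your proof follows essentially the same route as the paper's: decompose $C$ minor by minor via the inclusion-minimal cones $C^{IJ}\subseteq\tropcplus(V(f^{IJ}))$, then reduce to the hypersurface (Birkhoff) analysis for each minor. The paper's own proof is terser: it simply cites \Cref{prop:label-triangle-crit} for each $C^{IJ}$, even though that proposition is stated only for \emph{maximal} cones dual to an edge of the Birkhoff polytope. You supply the step that justifies this — observing that $F^{IJ}$ has at least one edge, and that for the evenness claim one must invoke the connectivity argument of \Cref{rem:maximal-cones-positive-corank-1} (applied to $\Bembedded$) to find an edge whose endpoints have opposite sign — which is exactly the right fix and makes the reduction to the maximal-cone statement explicit. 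One cosmetic remark: the inclusion $\Lambda(I,J)\subseteq E(\G[C])$ as written conflates permutations with edges; what is meant (and what you use) is that the edges arising from the two matchings $\sigma_1^{IJ},\sigma_2^{IJ}\in\Lambda(I,J)$ are among $E(\G[C])$, hence lie in the induced subgraph on $\{r_i:i\in I\}\sqcup\{g_j:j\in J\}$.
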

	\begin{proof}
		Let $C$ be a cone. 
		Then there exist unique inclusion-minimal cones $C^{IJ}\subseteq \tropcplus(V(f^{IJ}))$ such that $C = \bigcap_{I,J} C^{IJ}$, and $\G$ is the union of the labels $\G[C^{IJ}]$. If $C$ is positive, then so is $C^{IJ}$ for each $I,J$. By \Cref{prop:label-triangle-crit}, every subgraph $H$ on vertices $V(H) = I\sqcup J \subseteq R \sqcup G$, $|I| = |J| = r+1$ contains a subgraph consisting of a cycle of length $2l$, and a perfect matching of the remaining vertices. If $C$ is positive, then $l = l(I,J)$ is even by \Cref{prop:label-triangle-crit}.
	\end{proof}
	
	We note that the converse of the statement above is not true. In fact, for most cones $C$ that are not maximal (including non-positive cones), the label $\G$ is the complete bipartite graph $K_{n,d}$.
	We close this section with a property of the label $\G$.

	\begin{prop}\label{prop:labels-matchings-degrees}
		Let $C \subseteq \prevar$ be a cone and $\G$ the label on vertices $V(\G) = R \sqcup G$.
		Each vertex $r \in R$ has degree at least $n - r$, and every vertex $g \in G$ has degree at least $d -r$. 
	\end{prop}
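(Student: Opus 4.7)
The plan is to prove both degree bounds by the same contradiction argument, exploiting the fact that each graph $\G[C^{IJ}]$ in the union defining $\G$ is built from embedded permutation matrices $\sigma^{IJ}$, each of which contributes a full perfect matching on $I \sqcup J$.

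First I would fix a row vertex $r_i \in R$ and suppose, for contradiction, that its degree in $\G$ is strictly less than $n-r$. Then the set $J' = \{\, j \in [n] \mid r_i g_j \notin E(\G)\,\}$ has cardinality at least $n - (n - r - 1) = r+1$. Pick any $J \subseteq J'$ with $|J| = r+1$ and any $I \in \binom{[d]}{r+1}$ containing $i$. The cone $C^{IJ}$ is dual to the face $\conv(\Lambda(I,J))$ of the embedded Birkhoff polytope $\Bembedded$, and since faces are nonempty, $\Lambda(I,J)$ contains some embedded permutation $\sigma^{IJ}$. By \Cref{const:bipartite-labels}, this permutation contributes the edge $r_i g_{\sigma^{IJ}(i)}$ to $\G[C^{IJ}] \subseteq \G$. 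But $\sigma^{IJ}(i) \in J \subseteq J'$, contradicting the definition of $J'$.

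The argument for a column vertex $g_j \in G$ is symmetric: if $g_j$ had degree less than $d-r$, then at least $r+1$ row vertices would fail to be adjacent to $g_j$; choose $I$ of size $r+1$ among them and any $J \ni j$; any $\sigma^{IJ} \in \Lambda(I,J)$ then yields an edge $r_{(\sigma^{IJ})^{-1}(j)}\, g_j$ with $(\sigma^{IJ})^{-1}(j) \in I$, a contradiction.

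The only subtlety to watch is verifying that $\Lambda(I,J)$ is always nonempty, which follows because $C^{IJ}$ is defined as the inclusion-minimal cone of $\trop(V(f^{IJ}))$ containing $C$, and as a cone in the normal fan of $\Bembedded$ it is dual to an actual (nonempty) face. No case analysis beyond this is needed, so I do not foresee a real obstacle.
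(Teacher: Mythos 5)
Your proof is correct and follows essentially the same route as the paper's: assume a row vertex has degree $< n-r$, collect $r+1$ non-neighbors in $G$, form an induced subgraph of size $(r+1)+(r+1)$, and derive a contradiction from the fact that this subgraph must contain a perfect matching. The one point where you differ is that the paper cites \Cref{prop:labels-positive-subgraphs} to conclude the induced subgraph has no isolated vertex, whereas you argue directly from the definition in \Cref{const:bipartite-labels} that $\Lambda(I,J)$ is nonempty and that any $\sigma^{IJ} \in \Lambda(I,J)$ contributes a perfect matching on $I \sqcup J$. Your direct route is actually slightly cleaner, since \Cref{prop:labels-positive-subgraphs} is literally stated under the hypothesis $C \subseteq \prevarplus[r]$, which \Cref{prop:labels-matchings-degrees} does not assume; your argument shows transparently that the perfect-matching conclusion needs no positivity, only that $\Lambda(I,J) \neq \emptyset$.
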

	\begin{proof}
		By \Cref{prop:labels-positive-subgraphs}, each subgraph of $\G$ of size $(r+1)+(r+1)$ contains a union of perfect matchings.
		Let $v_1 \in R$ and assume for contradiction that $\deg(r)\leq |R| -(r +1)$. Then there are nodes $g_1, \dots, g_{r+1}$ that are not adjacent to $v_1$. Hence, for any $v_2,\dots,v_{r+1} \in R$, the vertex $r_1$ is isolated in the induced subgraph $H$ on vertices $\{v_1,\dots,v_{r+1}\}\sqcup \{g_1,\dots g_{r+1}\}$. However, by \Cref{prop:labels-positive-subgraphs}, the graph $H$ does not contain an isolated vertex, which yields the desired contradiction. An analogous argument implies that $\deg(g)\geq |G| - r$ for all $g \in G$.
	\end{proof}
	
	We illustrate the difference of between the applicability of the triangle criteria for cartoons (\Cref{prop: triangle criterion}) and for bipartite graphs (\Cref{prop:label-triangle-crit}). Indeed, for maximal cones of $\tropdet[n-1][n,n]$ the description via cartoons and bipartite graphs are equivalent, as the proof of \Cref{prop:label-triangle-crit} suggests. 
	For arbitrary choices of $d$ and $n$, there is single bipartite graph describing a cone $C \subseteq \prevar$ as given in \Cref{const:bipartite-labels}.
	We can describe $C$ by cartoons as follows:  for each $I \in \binom{[d]}{r+1}, J \in \binom{[n]}{r+1}$, detect all maximal cones $C^{IJ} \subseteq \trop(V(f^{IJ}))$ such that $C \subseteq C^{IJ}$ and consider their cartoons.
	This describes the cone $C$ by a collection of at least $\binom{d}{r+1}\binom{n}{r+1}$ cartoons. Each of the cartoons can be obtained from the graph $\G[C^{IJ}]$ and $\G$ is the union over all these graphs. Therefore, the label $\G$ contains strictly less information than the collection of cartoons. Still, \Cref{prop:labels-positive-subgraphs} gives a criterion to detect (combinatorial) non-positivity.

	\begin{example}[Detecting non-positivity from {$\G$}]\label{ex:difference-cartoons-labels}
		Let $r = 2, d = 4, n = 3$, and consider the matrix
		\[
			A = \ma
				k_1 & 0 & 0 \\
				0 & k_2 & 0 \\
				0 & 0 & 1+k_3 \\
				0 & 0 & 1 
				\trix \in \tropdet[2][4,3], \ k_1, k_2, k_3 >0.
		\]
		Let $C$ be the maximal cone $C\subseteq \tropdet[2][4,3]$ containing $A \in \interior(C)$, and let $J = [3]$. For each $I \in \binom{4}{3}$ there is a unique maximal cone $C^{IJ}$ containing $C$. Their cartoons are displayed in \Cref{fig:example-difference-cartoons-labels} (left).
		The cone $C$ is positive if and only if for each $I$ (and $J$) there exists a positive cone $C^{IJ} \supseteq C$. $C$ is not positive, which can be seen from the cartoons in \Cref{fig:example-difference-cartoons-labels} by the \hyperref[prop: triangle criterion]{Triangle criterion for cartoons} (\Cref{prop: triangle criterion}). 
		The label $\G$ can be seen in \Cref{fig:example-difference-cartoons-labels} (right). The subgraph $H$ on vertices $\{r_1, r_2, r_3\}\sqcup \{g_1, g_2, g_3\}$ does not contain a cycle of length $2l$, $l$ even. Hence, \Cref{prop:labels-positive-subgraphs} also implies that $C$ is not positive. For $r=2$, we present a full characterization of labels of maximal cones in terms of positivity in \Cref{th:positive-labels}. 
	
	\begin{figure}
		\centering
			\begin{tikzpicture}[scale=0.8]
				\filldraw (1,1.8) circle (3pt);
				\filldraw (0.7,0) circle (3pt);
				\filldraw (1.3,0) circle (3pt);
				\draw[thick] (0,0) -- (2,0) -- (1,1.8) -- (0,0);
				\node at (1,-1) {$I = \{1,2,3\}$}; 
			\end{tikzpicture}
			\begin{tikzpicture}[scale=0.8]
				\filldraw (1,1.8) circle (3pt);
				\filldraw (0.7,0) circle (3pt);
				\filldraw (1.3,0) circle (3pt);
				\draw[thick] (0,0) -- (2,0) -- (1,1.8) -- (0,0);
				\node at (1,-1) {$I = \{1,2,4\}$}; 
			\end{tikzpicture}
			\begin{tikzpicture}[scale = 0.8]
				\filldraw (1,0) circle (3pt);
				\filldraw (0.5,0.9) circle (3pt);
				\filldraw (1.5,0.9) circle (3pt);
				\draw[thick] (0,0) -- (2,0) -- (1,1.8) -- (0,0);
				\node at (1,-1) {$I = \{1,3,4\}$}; 
			\end{tikzpicture} 
			\begin{tikzpicture}[scale = 0.8]
				\filldraw (1,0) circle (3pt);
				\filldraw (0.5,0.9) circle (3pt);
				\filldraw (1.5,0.9) circle (3pt);
				\draw[thick] (0,0) -- (2,0) -- (1,1.8) -- (0,0);
				\node at (1,-1) {$I = \{2,3,4\}$}; 
			\end{tikzpicture} \hspace*{2em}
		\begin{tikzpicture}
			[
			vertex/.style={circle, fill, minimum size=1em, scale=0.4},
			scale = 2
			]
			\node[vertex,label=left:{$r_1$}] (r1) at (0,1) {};
			\node[vertex,label=left:{$r_2$}] (r2) at (0,.5) {};
			\node[vertex,label=left:{$r_3$}] (r3) at (0,0) {};
			\node[vertex,label=left:{$r_4$}] (r4) at (0,-0.5) {};
			\node[vertex, above right = 0.5 and 2em of r2, label=right:{$g_1$}] (g1) {};
			\node[vertex,  above right = 0.5 and 2em of r3, label=right:{$g_2$}] (g2) {};
			\node[vertex,  below right = 0.5 and 2em of r3, label=right:{$g_3$}] (g3) {};
			\draw (r1) -- (g2) -- (r3) -- (g1) -- (r2) -- (g3) -- (r1); 
			\draw[dashed] (g1) -- (r4) -- (g2);
	\end{tikzpicture}
	
		\caption{The cartoons of the maximal cones $C^{IJ}$ in \Cref{ex:difference-cartoons-labels} (left) and the label $\G$ (right). The edges outside the subgraph $H$ are dashed.}
		\label{fig:example-difference-cartoons-labels}
	\end{figure}
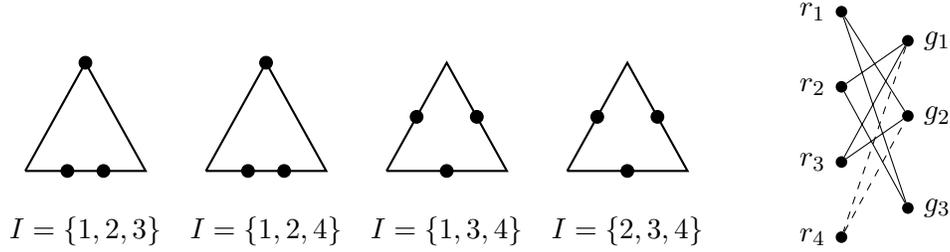
\end{example}

\section{Rank 2}\label{sec:rank-2}

In this section, we consider the tropicalization of the matrices of rank at most $2$, that means the tropical determinantal variety $\tropdet[2]$ of tropical matrices of (Kapranov) rank at most $2$.
It was shown in $\cite{develin_ranktropicalmatrix}$ that the notions of tropical rank and Kapranov rank agree for rank $2$. 

\subsection{Positivity and Barvinok rank}\label{sec:rank-2-barvinok}

Ardila showed in \cite{ardila_tropicalmorphismrelated} that a tropical matrix of tropical rank $2$ is positive if and only if it has Barvinok rank $2$. The proof reveals a crucial connection between the positivity of tropical matrices and the nonnegative rank of matrices with ordinary rank $2$.
We begin by reviewing different characterizations of the Barvinok rank. 

\begin{prop}\label{prop:barvinok-rank}
	For a tropical matrix $A \in \RR^{d \times n}$, the following are equivalent:
	\begin{enumerate}[(i)]
		\item $A$ has Barvinok rank at most $r$.\label{eq:prop-barvinok-rank-1}
		\item The columns of $A$ lie in the tropical convex hull of $r$ points in $\TP^{d-1}$.
		\item There are matrices $X \in \RR^{d \times r}, Y \in \RR^{r \times d}$ such that $A = X \odot Y$.\label{eq:prop-barvinok-rank-3}
	\end{enumerate}
\end{prop}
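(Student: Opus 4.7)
The plan is to prove the chain (i)$\Leftrightarrow$(iii)$\Leftrightarrow$(ii), treating each equivalence as essentially a reformulation, since all three statements express the same structure (a tropical rank-$r$ factorization) in three different languages (sum of rank-$1$ matrices, containment in a tropical convex hull, tropical matrix product).

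For (i)$\Leftrightarrow$(iii), I would unpack the definition of Barvinok rank given just before the proposition. A tropical rank-$1$ matrix is by definition of the form $x \odot y^T$ for column vector $x \in \RR^d$ and row vector $y \in \RR^n$, so its $(i,j)$-entry equals $x_i + y_j$. If $A = \bigoplus_{k=1}^{r} x^{(k)} \odot (y^{(k)})^T$, then $A_{ij} = \min_{k}\bigl(x^{(k)}_i + y^{(k)}_j\bigr)$. Collecting the $x^{(k)}$ as columns of $X \in \RR^{d \times r}$ and the $y^{(k)}$ as rows of $Y \in \RR^{r \times n}$, this is exactly $(X \odot Y)_{ij}$. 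The argument is manifestly reversible, giving the equivalence.

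For (iii)$\Leftrightarrow$(ii), I would use the same factorization but read it column-by-column. If $A = X \odot Y$, then the $j$-th column satisfies $A_j = \bigoplus_{k=1}^r Y_{kj} \odot X_k$, where $X_k$ is the $k$-th column of $X$; that is, $A_j$ is a tropical linear combination of $X_1,\dots,X_r$, hence lies in the tropical convex hull of these $r$ points in $\TP^{d-1}$. Conversely, if each $A_j$ lies in the tropical convex hull of $r$ given points $p_1,\dots,p_r \in \TP^{d-1}$, then there exist scalars $\lambda_{kj} \in \RR$ with $A_j = \bigoplus_k \lambda_{kj} \odot p_k$; setting $X := (p_1 \mid \dots \mid p_r)$ and $Y := (\lambda_{kj})_{k,j}$ recovers $A = X \odot Y$.

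There is no real obstacle here: the proof is purely a bookkeeping translation between three equivalent descriptions of the same tropical factorization. The only mild subtlety is that points in $\TP^{d-1}$ are equivalence classes modulo tropical scaling, but this is absorbed painlessly by the scalars $\lambda_{kj}$, so any choice of representatives works. I would also note in passing that the proposition's statement has a minor typo: for $A \in \RR^{d\times n}$ the factor $Y$ in (iii) must lie in $\RR^{r \times n}$ rather than $\RR^{r \times d}$, and I would phrase the proof accordingly.
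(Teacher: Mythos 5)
The paper states this proposition without proof, treating it as a known characterization of Barvinok rank from the tropical convexity literature (cf.\ \cite{develin04_tropicalconvexity}); immediately after the statement it only records the definition of $\odot$ and moves on, so there is no in-paper argument to compare against. Your proof is correct and complete: the equivalence (i)$\Leftrightarrow$(iii) is exactly the definitional unpacking of a tropical rank-$1$ decomposition into a tropical matrix factorization, and (iii)$\Leftrightarrow$(ii) is the column-by-column reading of $A = X\odot Y$, with the passage between $\RR^d$ and $\TP^{d-1}$ handled correctly by absorbing the tropical scalar shift into the coefficients $\lambda_{kj}$. You are also right that the statement of (iii) contains a typo and should read $Y \in \RR^{r\times n}$.
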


Here, $X \odot Y$ denotes the tropical matrix multiplication, i.e. $(X \odot Y)_{ij} = \bigoplus_{k = 1}^r X_{ik}\odot Y_{kj} = \min\{X_{ik}+ Y_{kj} \mid k \in [r]\}$.
The equivalence of \eqref{eq:prop-barvinok-rank-1} and \eqref{eq:prop-barvinok-rank-3} leads to the argument in \cite{ardila_tropicalmorphismrelated}, which we give for completeness.

\begin{theorem}[{\cite{ardila_tropicalmorphismrelated}}]\label{th:positive-rank-2-barvinok}
	The positive part of the tropical determinantal variety $\tropdet[2]$ coincides with the set of matrices of Barvinok rank $2$.
\end{theorem}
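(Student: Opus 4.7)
My plan is to split the equivalence into its two inclusions, one of which is a direct lifting argument and the other of which reduces to a classical statement about nonnegative rank.

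\medskip

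\emph{Direction: Barvinok rank at most $2$ implies positivity.} I would start from a factorization $A = X \odot Y$ with $X \in \RR^{d \times 2}$ and $Y \in \RR^{2 \times n}$ provided by \Cref{prop:barvinok-rank}. Lifting entrywise via $\tilde X_{ik} = t^{X_{ik}}$ and $\tilde Y_{kj} = t^{Y_{kj}}$, I consider the classical matrix product $\tilde A = \tilde X \tilde Y$. Because each summand of $\tilde A_{ij} = \sum_k t^{X_{ik}+Y_{kj}}$ has leading coefficient equal to $1$, no cancellation of leading terms is possible; thus the leading term of $\tilde A_{ij}$ has exponent $\min_k (X_{ik}+Y_{kj}) = A_{ij}$ and a positive integer as leading coefficient. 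Hence $\tilde A \in \R_+^{d \times n}$ has rank at most $2$ and $\val(\tilde A) = A$, so $A \in \tropdetplus[2]$.

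\medskip

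\emph{Direction: positivity implies Barvinok rank at most $2$.} Fix $A \in \tropdetplus[2]$. By \Cref{cor:really-positive-part-minors} the really positive and positive parts agree, so there exists a lift $\tilde A \in \R_+^{d \times n}$ of rank at most $2$. I would then invoke the analogue, over the ordered real closed field $\R$, of the classical theorem that a rank-$2$ matrix all of whose entries are nonnegative has nonnegative rank equal to its rank. This yields a factorization $\tilde A = \tilde X \tilde Y$ with $\tilde X, \tilde Y$ having entries in $\R_+ \cup \{0\}$ of sizes $d \times 2$ and $2 \times n$ respectively. Taking valuations entrywise gives
\[
A_{ij} = \min_k \bigl(\val(\tilde X_{ik}) + \val(\tilde Y_{kj})\bigr),
\]
which is precisely the $(i,j)$-entry of the tropical product $\val(\tilde X) \odot \val(\tilde Y)$. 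By \Cref{prop:barvinok-rank}, $A$ therefore has Barvinok rank at most $2$.

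\medskip

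The main obstacle is the nonnegative-rank-equals-rank statement for matrices over $\R$ rather than $\RR$. The classical argument observes that the column span of a rank-$2$ matrix with nonnegative entries meets the nonnegative orthant in a two-dimensional convex cone, whose two extreme rays provide the columns of $\tilde X$, while the expansion coefficients form the nonnegative columns of $\tilde Y$. This argument uses only that the scalar field is ordered and real closed, both of which hold for $\R$. The transfer to the Puisseux-series setting is routine but needs to be spelled out carefully, and this is the only step that I expect to require more than a line of argument.
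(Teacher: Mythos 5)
Your proof is correct and hinges on the same key input as the paper's — that a strictly positive rank-$2$ matrix admits a strictly positive rank-$2$ factorization, i.e.\ Cohen--Rothblum \cite{cohen93_nonnegativeranksdecompositions} — but you unpack explicitly what the paper delegates to the tropicalization-of-positive-maps theorem of \cite{pachter04_tropicalgeometrystatistical} applied to $(X,Y)\mapsto XY$: for one inclusion you construct the monomial lift $\tilde X_{ik}=t^{X_{ik}}$, $\tilde Y_{kj}=t^{Y_{kj}}$ and observe that the resulting leading terms cannot cancel; for the other you apply Cohen--Rothblum directly to a lift over $\R$ and take valuations. Your route is more self-contained and elementary, at the cost of having to transfer Cohen--Rothblum to the real Puisseux field (justified, as you note, since $\R$ is ordered and the cone argument is finite-dimensional polyhedral geometry). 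One point to tighten: you should ask for a \emph{strictly} positive factorization $\tilde X,\tilde Y$ rather than a merely nonnegative one — otherwise a zero entry has valuation $\infty$ and $\val(\tilde X)$ leaves the class $\RR^{d\times 2}\times\RR^{2\times n}$ required in \Cref{prop:barvinok-rank}. The strictly positive form is exactly what the paper's citation to \cite{cohen93_nonnegativeranksdecompositions} supplies for a strictly positive $\tilde A$ (perturb the extreme rays of the cone inward); alternatively, any $\infty$ appearing in $\val(\tilde X)$ or $\val(\tilde Y)$ can be replaced by a sufficiently large real number without changing the tropical product.
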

\begin{proof}
  Consider the map $f \colon \RR^{d \times 2} \times \RR^{2 \times n} \to \RR^{d \times n}, \ (X,Y) \mapsto XY$.
  The image of this map is the determinantal variety $V(I_2) \subseteq \RR^{d \times n}$, i.e. the set of matrices of rank at most $2$.
  We can write $f$ as a polynomial map 
  \begin{equation*}
  f = (f_{11},\dots,f_{dn}) \colon \RR^{2d+2n} \to \RR^{dn} \quad \text{ where } \quad f_{ij}(X,Y) = X_{i1}Y_{1j} + X_{i2}Y_{2j} \enspace .   
  \end{equation*}
  Each $f_{ij}$ has only positive coefficients, i.e. $f$ is \emph{positive}.
  By replacing $+$ with $\min$ and $\cdot$ with $+$ in the definition of $f$, we obtain its tropicalization 
  \begin{equation*}
  g \colon \RR^{d \times 2} \times \RR^{2 \times n} \to \RR^{d \times n}, \ (X,Y) \mapsto X\odot Y \enspace .
  \end{equation*}
  It follows from \cite[Theorem 2]{pachter04_tropicalgeometrystatistical} that since $f$ is positive, we have $\text{Im}(g) \subseteq \tropplus(V(I_2)) = \tropdet[2]$.
  Furthermore, if $f(\RR^{dn}_{> 0}) = \text{Im}(f)\cap \RR^{d\times n}_{>0}$, then $\text{Im}(g) = \tropdet[2]$.
  Indeed, this holds since every positive $(d\times n)$-matrix of rank $2$ can be written as the product of a positive $(d\times 2)$-matrix and a positive $(2 \times n)$-matrix \cite[Theorem 4.1]{cohen93_nonnegativeranksdecompositions}. 
  Finally, note that $\text{Im}(g)$ is precisely the set of matrices of Barvinok rank $2$ by \Cref{prop:barvinok-rank}.
\end{proof}

Consider the columns of $A\in\tropdet[2]$ as the coordinates of $n$ points in $\TP^{d-1}$. \Cref{prop:kapranov-rank-points-on-hyperplane} implies that these are $n$ points lying on a common tropical line $L$ in $\TP^{d-1}$. A tropical line in $\TP^{d-1}$ is a pure, connected $1$-dimensional polyhedral complex not containing any cycles. This complex consists of $d$ unbounded rays in direction of the standard basis $e_1, e_2,\dots, e_{d-1}, e_d \cong - (e_1 + \dots + e_{d-1})$. It has $k\leq d-3$ vertices, which are connected by $k-1$ bounded edges. It was shown in \cite{speyer_tropicalgrassmannian} that tropical lines are in bijection with phylogenetic trees on $d$ leaves, and the space of tropical lines in $\TP^{d-1}$ is the tropical Grassmannian $\Gr[2,d]$. We describe the tropical Grassmannian in more detail in \Cref{sec:tropical-grassmannian}.
On the other hand, the tropical convex hull of the $n$ columns of $A$ is a $1$-dimensional polyhedral complex that only consists of bounded line segments. This complex has two different kinds of vertices, called \emph{tropical vertices} (which is a subset of the $n$ columns of $A$) and \emph{pseudovertices}. As a set, the tropical convex hull is strictly contained in the tropical line $L$. A detailed exposition on tropical convex hulls can be found e.g. in \cite{develin04_tropicalconvexity} and \cite[Chapter 6]{joswig21_essentialstropicalcombinatorics}.\\

\Cref{prop:barvinok-rank,th:positive-rank-2-barvinok} together characterize the possible ``positive'' point configurations of $n$ points on a tropical line:
Such a tropical point configuration is positive if and only if its tropical convex hull has (at most) $2$ tropical vertices. This means that the columns lie on a tropical line segment, which is a concatenation of classical line segments \cite[Proposition 3]{develin04_tropicalconvexity}.
Based on this connection with the Barvinok rank, we obtain a stronger result about the representation by $(3 \times 3)$-minors. 

\begin{theorem}\label{th:rank-2-positive-generators}
	The $(3\times 3)$-minors form a set of positive-tropical generators for $\tropdetplus[2]$.
\end{theorem}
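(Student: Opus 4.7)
The inclusion $\tropdetplus[2] \subseteq \prevarplus[2]$ holds by \eqref{eq:positive-part-inclusion}. The plan is to establish the reverse inclusion by combining \Cref{th:positive-rank-2-barvinok} with a projection argument on tropical lines. Take $A \in \prevarplus[2]$. Since the $(3\times 3)$-minors form a tropical basis for $I_2$ by \Cref{thm:shitov}, we have $A \in \tropdet[2]$, so by \Cref{prop:kapranov-rank-points-on-hyperplane} the columns $A_1, \dots, A_n$ lie on a common tropical line $L \subset \TP^{d-1}$. By \Cref{th:positive-rank-2-barvinok}, $A$ lies in $\tropdetplus[2]$ if and only if it has Barvinok rank at most $2$, which by \Cref{prop:barvinok-rank} is equivalent to the columns of $A$ lying in a single tropical segment. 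Hence it suffices to prove this segment property.

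I argue by contrapositive. Suppose the columns do not lie on a tropical segment. Then the subtree $\tconv(A_1, \ldots, A_n) \subseteq L$ is not a path, so there exist indices $j_1, j_2, j_3 \in [n]$ and a Steiner point $q$ such that $A_{j_1}, A_{j_2}, A_{j_3}$ lie in three distinct connected components of $\tconv(A_1, \ldots, A_n) \setminus \{q\}$, i.e. form a tropical tripod with center $q$. View $L$ as a phylogenetic tree with leaves labeled by $[d]$ (the coordinate directions of $\TP^{d-1}$); each of the three components above extends in $L$ to a branch reaching some nonempty set of leaves, and since the three branches are disjoint one can pick distinct $i_1, i_2, i_3 \in [d]$ with $i_k$ reachable from $A_{j_k}$ without crossing $q$.

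Now consider the $3\times 3$ submatrix $A^{IJ}$ with $I = \{i_1, i_2, i_3\}$ and $J = \{j_1, j_2, j_3\}$. The key geometric claim is that the coordinate projection $\pi \colon \TP^{d-1} \to \TP^{2}$ onto rows $I$ sends $L$ to a tropical line $L' \subset \TP^2$ and sends $A_{j_k}$ onto the wing of $L'$ corresponding to leaf $i_k$, for $k = 1,2,3$. Granting this, the columns of $A^{IJ}$ form a tripod in $\TP^2$ with each point lying in the interior of a different wing. By \Cref{th:geometric-triangle-criterion} (or \Cref{prop: triangle criterion} together with \Cref{rem:maximal-cones-positive-corank-1}), the cartoon of any maximal cone of $\tropdet[2][3,3]$ containing $A^{IJ}$ has a marked triangle, so the cone is non-positive. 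Hence $A^{IJ} \notin \tropcplus(V(f^{IJ}))$, contradicting $A \in \prevarplus[2]$. Therefore the columns of $A$ lie on a tropical segment, $A$ has Barvinok rank at most $2$, and $A \in \tropdetplus[2]$.

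The main obstacle is the projection claim. It amounts to verifying that the ray of $L$ in coordinate direction $-e_{i_k}$ projects under $\pi$ to the $i_k$-wing of the tropical line $L' \subset \TP^2$, and that a point of $L$ lying in the branch toward leaf $i_k$ still lies on that wing after projection. This is straightforward from the description \eqref{eq:tropical-hyperplane} of a tropical hyperplane once one normalizes the apex of $L$ appropriately (as in \Cref{lem:wlog-matrix-nonneg-mod-linspace}), but care is needed for degenerate positions of $q$ or of the $A_{j_k}$ on the tree (e.g., when $q$ coincides with some $A_{j_k}$ or lies at a leaf), which can be handled by perturbing within the interior of the cone containing $A^{IJ}$.
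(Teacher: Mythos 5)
Your proposal is correct in outline and takes a genuinely different route from the paper's. The paper instead fixes a $(3\times n)$-submatrix $M$, shows that the tropical convex hull of the columns of $M$ in $\TP^2$ cannot have a vertex of degree $\geq 3$ (by applying the geometric triangle criterion to the $(3\times 3)$-submatrix formed by three offending columns), then transposes to get the same for $(3\times d)$-submatrices, and finally appeals to the fact that the tropical convex hull of the columns of $A$ equals that of its rows to reduce a tripod in $\TP^{d-1}$ to a $(3\times d)$-submatrix. Your argument skips the transposition and row/column duality entirely and instead projects directly onto three well-chosen rows, which is conceptually closer to the paper's own Lemma~\ref{lem: rank 3 coordinate projection} for the rank-$3$ Starship criterion; this is arguably more uniform with the rest of the paper.

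The one place where you owe more is precisely what you flag as the ``main obstacle''. It should not be waved off, but it also does not need perturbation: the worry about degenerate positions of $q$ or of the $A_{j_k}$ does not in fact arise, and perturbing would be illegitimate anyway since $A^{IJ}$ is determined by $A$. The branching point $q$ is necessarily a vertex of $L$ (not at infinity, and a tropical line only branches at vertices), and $A_{j_k}\neq q$ by your choice. The projection claim then has an elementary proof: normalize $q=0$; the path in $L$ from $q$ to $A_{j_k}$ consists of edges with direction vectors $e_{U}$ for nested sets $U_1=S_k\supseteq U_2\supseteq\cdots$, so $A_{j_k}=\sum_m \lambda_m e_{U_m}$ with $\lambda_1>0$. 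Since $I\cap S_k=\{i_k\}$, each $e_{U_m\cap I}$ is either $e_{i_k}$ or $0$, giving $\pi_I(A_{j_k})=c_k\,e_{i_k}$ with $c_k\geq\lambda_1>0$. Thus modulo lineality space the submatrix $A^{IJ}$ is $\mathrm{diag}(c_1,c_2,c_3)$, its initial form in $\det$ is $x_{12}x_{23}x_{31}+x_{13}x_{21}x_{32}$ (both $3$-cycles, same sign), so $A^{IJ}\notin\tropcplus(V(f^{IJ}))$ by Proposition~\ref{proposition:speyer-williams-positivity}. With this filled in, your proof is complete and sound.
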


\begin{proof}
	By \Cref{th:positive-rank-2-barvinok} and \eqref{eq:positive-part-inclusion} (in \Cref{sec:notions-of-positivity}), we have that
	\begin{equation*}
		\{A \in \T^{d \times n} \mid A \text{ has Barvinok rank } \leq 2\} = \tropdetplus[2] \subseteq \bigcap_{\substack{f \text{ is a } \\ (3\times3)\text{-minor}}} \tropcplus(V(f)).
	\end{equation*}

	It thus remains to show the reverse inclusion. 
	Let $A \in \tropcplus(V(f))$ for every $(3 \times 3)$-minor $f$.
	Let $I = \{i_1, i_2, i_3\} \in [d],\  J = \{j_1, j_2, j_3\} \in [d]$ and
	\[
		f^{IJ}(x_{i_1j_1},x_{i_1j_2},x_{i_1j_3},x_{i_2j_1},x_{i_2j_2},x_{i_2j_3},x_{i_3j_1},x_{i_3j_2},x_{i_3j_3}) = \sum_{\sigma \in S_3} \sgn(\sigma) \prod_{k=1}^3 x_{i_k j_{\sigma(k)}}.
 	\]
	Then $A$ is a $(d \times n)$-matrix such that $A = \val(\tilde{A}^{IJ})$ for some $\tilde{A}^{IJ} \in (\Cplus)^{d \times n}$ and 
	\[	f^{IJ}(\tilde{A}_{11},\tilde{A}_{12},\tilde{A}_{13},\tilde{A}_{21},\tilde{A}_{22},\tilde{A}_{23},\tilde{A}_{31},\tilde{A}_{32},\tilde{A}_{33}) = 0.
	\]
	Recall that $A \in \prevarplus[2] \subseteq \prevar[2] = \tropdet[2]$ by \eqref{eq:positive-tropical-determinantal-prevariety} (in \Cref{sec:determantal-varieties}) and \Cref{thm:shitov}, and so $A$ (and each submatrix) has Kapranov rank $\leq 2$. Hence, the columns of $A$ lie on a tropical line $L$, and the convex hull of its columns is a $1$-dimensional polyhedral complex supported by $L$.
	We want to show that $A$ has Barvinok rank $\leq 2$, i.e. that the tropical convex hull of the columns of $A$ has at most $2$ tropical vertices.
	Let $M$ be the $(3 \times n)$-submatrix of $A$ with rows $i_1, i_2, i_3 \in [d]$.
	We view the columns of $M$ as $n$ points in $\TP^2$ and consider the tropical convex hull of these points as polyhedral complex of ordinary line segments. 
	First, we show that the tropical convex hull of the $n$ columns of $M$ does not contain a pseudovertex that is incident to more than $2$ edges. Assume for contradiction that there is such a pseudovertex $p$ incident to $3$ line segments $l_1, l_2, l_3$. Then there must be $3$ columns $j_1, j_2, j_3$ of $M$ whose tropical convex hull contains $p$ and the three line segments $l_1, l_2, l_3$. Consider the $(3\times 3)$-submatrix $N$ with rows $I = \{i_1, i_2, i_3\}$ and columns $J = \{j_1, j_2, j_3 \}$. 
	Note that $N$ is the valuation of the submatrix $\tilde{N} \in (\Cplus)^{3\times 3}$ of the matrix $\tilde{A}^{IJ}$.
	By assumption, the matrix $\tilde{N}$ is positive and has rank $\leq 2$. Thus, $N$ has Kapranov rank $\leq 2$, i.e. $N \in \tropdetplus[2][3,3]$.
	Therefore,
	 \Cref{th:geometric-triangle-criterion} (\hyperref[th:geometric-triangle-criterion]{Geometric triangle criterion})
	implies that the tropical convex hull of columns of $N$ cannot contain a pseudovertex incident to $3$ line segments. Hence, $M$ does not contain such a subconfiguration.
	We have thus shown that $A$ does not contain a $(3 \times n)$-submatrix where the convex hull of the columns contain a pseudovertex that is incident to (at least) $3$ line segments. Note that the tropical convex hull of the columns of a matrix equals the tropical convex hull of its rows.
	We can thus apply the same argument to $A^t$ to obtain that $A$ also does not contain a $(3 \times d)$-submatrix with this property either.
	
	We now show the same statement for the matrix $A$. Assume for contradiction that the convex hull of the $n$ columns in $\TP^{d-1}$ contains a pseudovertex $p$ of that is incident to line segments $l_1, l_2, l_3$. Then again there must be $3$ columns $j_1, j_2, j_3$ of $A$ such that their tropical convex hull contains $p$ and $l_1,l_2, l_3$. However, these columns form a $(3\times d)$-submatrix of $A$, which yields a contradiction to the argument above. 
\end{proof}

\subsection{Bicolored phylogenetic trees}\label{sec:bicolored-phylogenetic-trees}

It was shown in \cite{develin_modulispacen,markwig_spacetropicallycollinear} that $\tropdet[2]$ is a shellable complex of dimension $d+n-4$. 
Furthermore, it admits a triangulation by the space of \emph{bicolored phylogenetic trees} $\Phspace$. 
That is, $\Phspace$ is a simplicial fan, whose maximal cones are in correspondence with the combinatorial types of bicolored phylogenetic trees.
The identification of matrices in $\tropdet[2]$ with bicolored trees is as follows:

\begin{const}[Bicolored phylogenetic trees \cite{develin_modulispacen,markwig_spacetropicallycollinear}]\label{construction:bicolored-phylogenetic-trees} 
Let $A_1,\dots,A_n$ be tropically collinear points in $\TP^{d-1}$ and $L$ be a tropical line through these points.
The tropical convex hull $\tconv(A_1,\dots,A_n)$ is a connected $1$-dimensional polyhedral complex supported on a subset of $L$. 
First, for each $j \in [n]$ attach a green leaf with label $j$ at the point $A_j$. 
Note that the line $L$ has an unbounded ray in each coordinate direction $e_1,\dots,e_d$. 
Shorten such an unbounded ray in direction $e_i$ to obtain a red leaf with label $i$.
This procedure results in a tree on $d$ red and $n$ green leaves.
We refer to these color classes as $R$ (for ``red'' or ``rows of $A$'') and $G$ (for ``green'', corresponds to columns of $A$). An example of this construction is shown in \Cref{fig:bicolored-trees-construction}.
\end{const}

		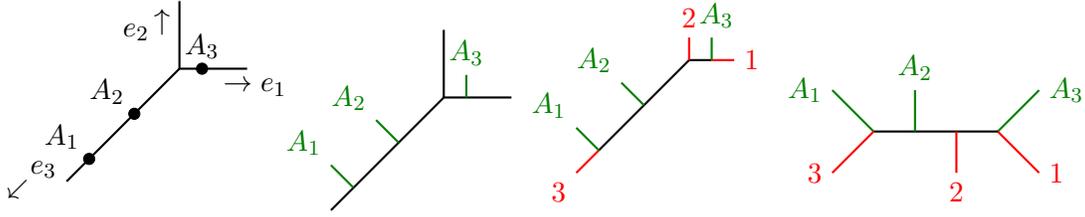
\begin{figure}
		\centering
		\begin{tikzpicture}[scale=0.3]
			\filldraw (-3,-3) circle (7pt);
			\filldraw (-1,-1) circle (7pt);
			\filldraw (2,1) circle (7pt);
			\draw[thick] (-4,-4) -- (1,1) -- (4,1);
			\draw[thick] (1,1) -- (1,4);
			\node[anchor = south east] at (-3,-3) {$A_1$};
			\node[anchor = south east] at (-1,-1) {$A_2$};
			\node[anchor = south] at (2,1) {$A_3$};
			\node[anchor = north west] at (2.5,1) {$\rightarrow e_1$};
			\node[anchor = east] at (1,3) {$e_2$ \rotatebox{90}{$\rightarrow$}};
			\node[anchor = east] at (-4,-4) {\rotatebox{225}{$\rightarrow$}$e_3$};
		\end{tikzpicture}\hspace*{-1em}
		\begin{tikzpicture}[scale=0.3]
			\draw[thick] (-4,-4) -- (1,1) -- (4,1);
			\draw[thick] (1,1) -- (1,4);
			\draw[thick, DarkGreen] (-3,-3) -- (-4,-2);
			\draw[thick, DarkGreen] (-1,-1) -- (-2,0);
			\draw[thick, DarkGreen] (2,1) -- (2,2);
	 		\node[anchor = south east, DarkGreen] at (-4,-2) {$A_1$};
			\node[anchor = south east, DarkGreen] at (-2,-0) {$A_2$};
			\node[anchor = south, DarkGreen] at (2,2) {$A_3$};
		\end{tikzpicture}
		\begin{tikzpicture}[scale=0.3]
			\draw[thick] (-3,-3) -- (1,1) -- (2,1);
			\draw[thick, red] (1,1) -- (1,2);
			\draw[thick, red] (-3,-3) -- (-4,-4);
			\draw[thick, red] (2,1) -- (3,1);
			\draw[thick, DarkGreen] (-3,-3) -- (-4,-2);
			\draw[thick, DarkGreen] (-1,-1) -- (-2,0);
			\draw[thick, DarkGreen] (2,1) -- (2,2);
			\node[anchor = south east, DarkGreen] at (-4,-2) {$A_1$};
			\node[anchor = south east, DarkGreen] at (-2,-0) {$A_2$};
			\node[anchor = south, DarkGreen] at (2,2) {\vspace*{0.5 em} $A_3$};
			\node[anchor = north east, red] at (-4,-4) {$3$};
			\node[anchor = south, red] at (1,2) {$2$};
			\node[anchor = west, red] at (3,1) {$1$};
		\end{tikzpicture}
		\begin{tikzpicture}[scale=0.55]
			\node[anchor=east,DarkGreen] at (-1,1) {$A_1$};
			\node[anchor=east, red] at (-1,-1) {$3$};
			\node[anchor=south,DarkGreen] at (1,1) {$A_2$};
			\node[anchor=north,red] at (2,-1) {$2$};
			\draw[thick,DarkGreen] (3,0) -- (4,1);
			\draw[thick,red] (3,0) -- (4,-1);
			\node[anchor=west,red] at (4,-1) {$1$};
			\node[anchor=west,DarkGreen] at (4,1) {$A_3$};
			\draw[thick] (0,0) -- (3,0);
			\draw[thick,DarkGreen] (0,0) -- (-1,1);
			\draw[thick,red] (0,0) -- (-1,-1);
			\draw[thick,DarkGreen] (1,0) -- (1,1);
			\draw[thick,red] (2,0) -- (2,-1);
		\end{tikzpicture} 
		\caption{The construction of the bicolored phylogenetic tree for the matrix from \Cref{ex:cartoon-example-cont}. In the figure on the right, the green leaves are at the top and the red leaves at the bottom.}
		\label{fig:bicolored-trees-construction}
	\end{figure}

\begin{definition}\label{def:bicolored-splits}
The removal of an internal edge splits the tree into two connected components, where each component contains leaves of both colors.
These partitions $(S,([d]\sqcup[n])\setminus S)$ are the \emph{bicolored splits} of the tree.
A bicolored split is \emph{elementary} if one of the two parts has only $2$ elements.
An \emph{internal edge} of a bicolored tree is an edge between two vertices that are not leaves. 
A vertex of a bicolored tree is an \emph{internal vertex} if it is adjacent to at least $2$ internal edges.
A tree is a \emph{caterpillar tree} if every vertex is incident to at most two internal edges. A tree is \emph{maximal} if it is contained in the interior of a maximal cone of $\Phspace$. Equivalently, a tree is maximal if it has $d+n-3$ internal edges.
\end{definition}

We remark that
the rays of a cone in $\Phspace$
correspond to precisely to the bicolored splits of the trees in the cone.
More precisely, the matrices in a ray correspond to a tree with one internal edge, separating the leaves $S$ and $([d]\sqcup[n])\setminus S$. We refer to a bicolored phylogenetic tree as \emph{positive} if it can be obtained by applying \Cref{construction:bicolored-phylogenetic-trees} to a positive matrix $A \in \tropdetplus[2]$.
The geometric interpretation of (positive) matrices of Barvinok rank $2$ implies the following for bicolored trees: 

\begin{corollary}\label{cor:positive-trees-rk-2}
	A bicolored phylogenetic tree is positive if and only if it is a caterpillar tree.
\end{corollary}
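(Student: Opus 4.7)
The plan is to invoke \Cref{th:positive-rank-2-barvinok}, which identifies positive matrices in $\tropdet[2]$ with matrices of Barvinok rank at most $2$, together with \Cref{prop:barvinok-rank}, which translates this into the geometric condition that all columns $A_j$ lie on a single tropical segment in $\TP^{d-1}$. The task then reduces to matching this segment condition with the caterpillar condition on the bicolored tree.

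For the direction ``caterpillar $\Rightarrow$ positive'', one starts from a caterpillar $T = T(A,L)$, so that every internal vertex of $T$ lies on a single path (the spine). Under \Cref{construction:bicolored-phylogenetic-trees}, each column $A_j$ is attached to $L$ either as a new degree-$3$ internal vertex (when $A_j$ is not a vertex of $L$) or at an existing vertex of $L$; in both cases the green leaf labeled $j$ hangs off an internal vertex of $T$, which must lie on the spine. Consequently the subtree of $L$ spanned by $\{A_1,\dots,A_n\}$ is contained in the spine and is itself a path, that is, a tropical segment. Then \Cref{prop:barvinok-rank} gives Barvinok rank at most $2$, and \Cref{th:positive-rank-2-barvinok} delivers positivity.

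For the converse, start from a positive $A$. Then \Cref{th:positive-rank-2-barvinok} gives Barvinok rank $\leq 2$, and \Cref{prop:barvinok-rank} supplies a tropical factorization $A = X \odot Y$ with $X \in \RR^{d\times 2}$. Every column $A_j$ of $A$ then lies on the tropical segment between the two columns $p,q$ of $X$. The plan is to take $L$ to be the tropical line through $p$ and $q$ and to observe that this $L$ is always a caterpillar: after sorting the coordinates of $p-q$, the induced linear order on $[d]$ is precisely the order in which the coordinate rays $e_1,\dots,e_d$ attach along the spine of $L$. \Cref{construction:bicolored-phylogenetic-trees} then attaches each green leaf on the spine of $L$ (inside the sub-segment from $p$ to $q$) and shortens each unbounded coordinate ray to a pendant red leaf, so $T$ inherits the caterpillar structure of $L$.

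The main obstacle will be the key combinatorial fact that the tropical line through two points of $\TP^{d-1}$ is always of caterpillar type. This is essentially the classical characterization of the totally positive tropical Grassmannian of $2$-planes as consisting of caterpillar tree types (Speyer--Williams); for a self-contained argument one can sort the coordinates of $p - q$ and describe the spine of $L$ directly in terms of this sorted sequence, with the unbounded ray $e_i$ hanging off the spine vertex indexed by the rank of $(p - q)_i$ in the sort.
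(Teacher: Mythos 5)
Your plan uses the same two key inputs as the paper's proof — \Cref{th:positive-rank-2-barvinok} (positivity equals Barvinok rank at most $2$) and \Cref{prop:barvinok-rank} (the geometric interpretation of Barvinok rank via tropical convex hulls) — but it replaces the paper's central observation with a more circuitous geometric argument, and that replacement creates a genuine gap in the converse direction. The paper's proof hinges on the fact that, \emph{by construction}, the set of bounded (internal) edges of the tree $P$ coincides with $\tconv(A_1,\dots,A_n)$, independently of which tropical line $L$ is used. Granting this, $P$ is a caterpillar if and only if $\tconv(A)$ is a concatenation of ordinary line segments, which by \Cref{prop:barvinok-rank} is the Barvinok-rank-$2$ condition, which by \Cref{th:positive-rank-2-barvinok} is positivity. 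Your proposal never isolates this identification, and the detour through the structure of $L$ is where the problems arise.

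In the forward direction, your step ``the green leaf labeled $j$ hangs off an internal vertex of $T$, which must lie on the spine'' is not always correct: if $A_j$ lies on an unbounded ray of $L$, then in $T$ the vertex at $A_j$ is incident to one green leaf, one (shortened) red leaf, and only a single internal edge, so it is \emph{not} an internal vertex under \Cref{def:bicolored-splits}. The desired conclusion — that $\tconv(A_1,\dots,A_n)$ is a path — is still true, but your stated intermediate step does not establish it; the identification of the internal edges with $\tconv(A)$ is what is actually needed.

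In the converse direction there is a more serious gap. You fix the Stiefel tropical line $L_0$ through the two factor points $p$ and $q$ and show that $T(A, L_0)$ is a caterpillar. But the tree $P$ under consideration is $T(A,L)$ for a tropical line $L$ that was already chosen when $P$ was produced by \Cref{construction:bicolored-phylogenetic-trees}; you are not free to substitute a different line. Without knowing that the internal structure of $T(A,L)$ is controlled by $\tconv(A)$ alone (and hence is independent of $L$), showing that \emph{some} line through the columns of $A$ yields a caterpillar does not show that $P$ itself is a caterpillar. Finally, a smaller point on attribution: the ``caterpillar'' characterization you invoke is not the Speyer--Williams description of the totally positive tropical Grassmannian (which is about clockwise/cyclically ordered leaf labelings, and the paper explicitly notes in \Cref{sec:pluecker-bijection} that these are \emph{not} the same as caterpillar trees); the caterpillar characterization of Stiefel tropical lines is from \cite{fink_stiefeltropicallinear}, which the paper cites.
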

\begin{proof}
	Let $P$ be a bicolored phylogenetic tree. 
	Then the tree corresponds to a cone in the triangulation $\Phspace$ of $\tropdet[2]$ in which for every matrix $A$  \Cref{construction:bicolored-phylogenetic-trees} yields $P$. Note that, by construction, the set of bounded edges of $P$ coincides with the tropical convex hull of the columns of $A$. 
	By \Cref{th:positive-rank-2-barvinok}, $A$ is positive if and only if $A$ has Barvinok rank $2$, i.e. the tropical convex hull of the columns of $A$ is the concatenation of ordinary line segments (\Cref{prop:barvinok-rank}). 
	Equivalently, the tropical convex hull (and the resulting phylogenetic tree) does not contain a vertex that is incident to $3$ internal edges or more.
\end{proof}

\begin{remark}\label{rem:maximal-cones-rank-2}
	Every positive cone of $\tropdet[2]$ is contained in a positive maximal cone, as every (non-maximal) caterpillar tree can be obtained by contracting internal edges of a maximal caterpillar tree, and maximal trees correspond to maximal cones (cf. \Cref{excursion:lineality-spaces}).
\end{remark}

We present one result regarding the triangulation, that will be useful for characterizing the positive labels of cones of $\tropdet[2]$ in terms of bipartite graphs.
This proof uses the \emph{balancing condition} of tropical lines. A first introduction to tropical lines was given in \Cref{sec:rank-2-barvinok}, describing a tropical line as a pure, embedded $1$-dimensional polyhedral complex. Let $v$ be a vertex of this polyhedral complex. The balancing condition describes that the slopes of all edges incident to $v$ sum to the zero vector.

	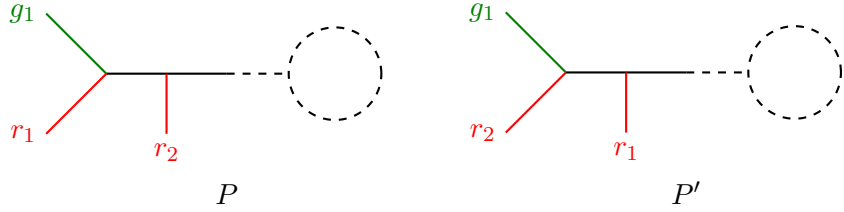
\begin{figure}
	\centering
	\begin{tikzpicture}[scale=0.8]
		\node[anchor=east,DarkGreen] at (-1,1) {$g_1$};
		\node[anchor=east,red] at (-1,-1) {$r_1$};
		\node[anchor=north,red] at (1,-1) { $r_2$};
		\draw[thick] (0,0) -- (2,0);
		\draw[thick, dashed](2,0) -- (3,0);
		\draw[thick,DarkGreen] (0,0) -- (-1,1);
		\draw[thick,red] (0,0) -- (-1,-1);
		\draw[thick,red] (1,0) -- (1,-1);
		\draw[dashed,thick] (3.8,0) circle (2em);
		\node at (2,-2) {$P$};
	\end{tikzpicture}  \hspace{2 em}
	\begin{tikzpicture}[scale=0.8]
		\node[anchor=east,DarkGreen] at (-1,1) {$g_1$};
		\node[anchor=east,red] at (-1,-1) {$r_2$};
		\node[anchor=north,red] at (1,-1) { $r_1$};
		\draw[thick] (0,0) -- (2,0);
		\draw[thick, dashed](2,0) -- (3,0);
		\draw[thick,DarkGreen] (0,0) -- (-1,1);
		\draw[thick,red] (0,0) -- (-1,-1);
		\draw[thick,red] (1,0) -- (1,-1);
		\draw[dashed,thick] (3.8,0) circle (2em);
		\node at (2,-2) {$P'$};
	\end{tikzpicture} 
	\caption{The trees $P$ and $P'$ from \Cref{lem:triangulation-in-bicolored-trees}.}
	\label{fig:lemma-triangulation-in-bicolored-trees-upstairs}
\end{figure}

\begin{lemma}\label{lem:triangulation-in-bicolored-trees}
  Let $C\subseteq \tropdet[2]$ be a maximal cone of the tropical determinantal variety and $\mathcal C_P \subseteq \Phspace$ be a maximal cone of its triangulation, the space of bicolored phylogenetic trees. 
  Then all matrices in the interior of $\mathcal C_P$ correspond to a maximal bicolored phylogenetic tree $P$ with fixed combinatorial type (as depicted in \Cref{fig:lemma-triangulation-in-bicolored-trees-upstairs} on the left).
  Let $\mathcal S$ be the set of splits of $P$.
  If $\mathcal S$ contains the splits
  \begin{equation*}
    (\{r_1,g_1,r_2\}, (R \sqcup G) \setminus \{r_1,g_1,r_2\}) \text{ and }  S_{r_1,g_1} = (\{r_1,g_1\}, (R\sqcup G) \setminus \{r_1,g_1\}) 
  \end{equation*}
  then $C$ also contains the maximal cone $\mathcal C_{P'} \subseteq \Phspace$, where all matrices correspond to trees $P'$ of fixed combinatorial type, and the set of splits of $P'$ is  
  $\mathcal{S}' = \mathcal S \setminus S_{r_1,g_1} \cup S_{r_2,g_1}$, where
  $ S_{r_2,g_1} = (\{r_2,g_1\}, (R\sqcup G) \setminus \{r_2,g_1\}) $  (\Cref{fig:lemma-triangulation-in-bicolored-trees-upstairs} on the right).
  \end{lemma}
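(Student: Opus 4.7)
The plan is to show that $\mathcal{C}_P$ and $\mathcal{C}_{P'}$ give rise to the same initial ideal of $I_2$, which forces them into the same maximal cone of $\tropdet[2]$. Since the $(3\times 3)$-minors form a tropical basis for $I_2$ by \Cref{thm:shitov}, it suffices to verify that $\initial_A(f^{IJ}) = \initial_{A'}(f^{IJ})$ for every $(3\times 3)$-minor $f^{IJ}$, where $A \in \interior(\mathcal{C}_P)$ and $A' \in \interior(\mathcal{C}_{P'})$ are chosen appropriately.

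Using \Cref{construction:bicolored-phylogenetic-trees}, I would identify the tropical line $L$ associated with $A$. The splits $(\{r_1,g_1,r_2\},\text{rest})$ and $S_{r_1,g_1}$ produce an internal vertex $u$ of $L$ at which rays $e_{r_1}, e_{r_2}$ meet a bounded edge of direction $\sum_{k\notin\{r_1,r_2\}}e_k$ leading into the rest subtree, and they place the point $A_{g_1}$ strictly on the $e_{r_1}$-ray past $u$, while $A_{g_j}$ for $j \neq 1$ lies strictly in the rest subtree. Fixing a representative $u\in\RR^d$ of this vertex, one writes $A_{g_1}=u+t_0 e_{r_1}$ with $t_0>0$. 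I would then define $A'$ by replacing column $g_1$ with $A'_{g_1}=u+t_0'e_{r_2}$ for some $t_0'>0$, keeping all other columns unchanged; the resulting matrix realizes the splits of $P'$.

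Since $A$ and $A'$ agree outside the entries $(r_1,g_1)$ and $(r_2,g_1)$, only minors $f^{IJ}$ with $g_1\in J$ and $I\cap\{r_1,r_2\}\neq\emptyset$ require attention. The central claim to prove is that for every such minor the tropical argmin over $\sigma\in S_3$ never includes a permutation pairing $g_1$ with $r_1$ or $r_2$; the initial forms on both sides then depend only on entries where $A$ and $A'$ coincide. To establish the claim, given a permutation $\sigma$ pairing $g_1$ with $r_1 \in I$, I would pick $i\in I\setminus\{r_1,r_2\}$ (which exists since $|I|=3$), let $g_\ell$ be the column paired with $i$ under $\sigma$, and swap these two pairings to obtain $\sigma'$. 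A direct computation gives
\[
V(\sigma)-V(\sigma')=\bigl(A_{r_1,g_1}+A_{i,g_\ell}\bigr)-\bigl(A_{r_1,g_\ell}+A_{i,g_1}\bigr),
\]
which I would show is strictly positive, so $\sigma\notin\argmin$.

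Positivity reduces to a tree-geometric statement: the path in $L$ from $u$ to $A_{g_\ell}$ begins with the bounded edge in direction $\sum_{k\notin\{r_1,r_2\}}e_k$ of length $s>0$, and every subsequent bounded edge of the rest subtree has direction $\sum_{k\in S}e_k$ for some $S\subseteq[d]\setminus\{r_1,r_2\}$, because $r_1, r_2$ lie on the near side of every such edge (a consequence of the split $(\{r_1,g_1,r_2\},\text{rest})$). Consequently, the displacement vector $\Delta$ from $u$ to $A_{g_\ell}$ satisfies $\Delta_{r_1}=0$ and $\Delta_i\geq s$ for $i\notin\{r_1,r_2\}$. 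Combined with $A_{r_1,g_1}\geq u_{r_1}$ and $A_{i,g_1}=u_i$ (for $i\neq r_1,r_2$), this yields $V(\sigma)-V(\sigma')\geq t_0 + s > 0$; the symmetric argument handles permutations pairing $g_1$ with $r_2$. The main obstacle will be making the correspondence between splits of the bicolored tree and edge directions of $L$ rigorous in the present bicolored setting, but this reflects the standard bijection between splits of a phylogenetic tree on $[d]$ and bounded edges of the associated tropical line.
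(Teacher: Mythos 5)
Your proposal is correct and follows essentially the same strategy as the paper's proof: fix representatives $A\in\interior(\mathcal C_P)$ and $A'\in\interior(\mathcal C_{P'})$ that agree in every column except $g_1$ (with $A_{g_1}$ placed on ray $e_{r_1}$ and $A'_{g_1}$ on ray $e_{r_2}$ at the same vertex), then show $\initial_A(f^{IJ})=\initial_{A'}(f^{IJ})$ for every $(3\times 3)$-minor. The only stylistic difference is that the paper normalizes so that the apex is the origin and $A_{g_1}=e_1$, then verifies the equality of initial forms by a short finite case enumeration (exploiting $M_{ij}>0$), whereas you prove the more conceptual intermediate claim that the tropical argmin never pairs $g_1$ with $r_1$ or $r_2$ via a transposition swap and the displacement estimate $\Delta_i\geq s>0$; both versions of step two are correct and rest on the same geometric input.
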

The proof of this lemma can be found in \Cref{sec:appendix-proofs-bicolored-trees}. The roles of $R$ and $G$ can be exchanged in this statement. Thus, if $d, n \geq 3$, then every maximal bicolored caterpillar tree has exactly $2$ such pairs of splits. This implies the following result on the number of triangulating cones for positive cones in $\tropdet[2]$.

\begin{corollary}\label{cor: triangulation into four parts}
	Let $C \subseteq \tropdet[2]$ be a maximal cone and $P \in C$ a bicolored caterpillar tree.
	Then the triangulation of $C$ by the space of bicolored phylogenetic trees subdivides $C$ into at least $4$ parts.
\end{corollary}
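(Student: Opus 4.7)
The plan is to apply \Cref{lem:triangulation-in-bicolored-trees} independently at each of the two ends of the caterpillar spine of $P$, and then to combine these two swaps to produce four distinct maximal cones of the triangulation inside $C$.

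First, I would reduce to the case in which $P$ is itself a maximal bicolored caterpillar tree, by refining $P$ to a maximal caterpillar inside the same cone (cf.~\Cref{rem:maximal-cones-rank-2}); any refinement is again a caterpillar, and counting maximal cones of the triangulation inside $C$ is enough for the conclusion. Since $d,n\geq 3$, each end of the spine of such a maximal caterpillar carries a cherry $\{a,b\}$, while the adjacent internal edge separates $\{a,b,c\}$ from the rest, where $c$ is the next leaf along the spine. For the cherry split to be bicolored, $a$ and $b$ must have different colors, so among the three leaves $\{a,b,c\}$ exactly two share a color. This is precisely the input required by \Cref{lem:triangulation-in-bicolored-trees} (applied directly when the repeated color is red, or after exchanging the roles of $R$ and $G$ as noted immediately after the lemma), and it yields a maximal cone $\mathcal C_{P'}\subseteq C$ whose caterpillar $P'$ agrees with $P$ away from that end but swaps the two same-colored leaves among $\{a,b,c\}$.

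Performing this swap at either end independently gives two new caterpillars $P_1$ and $P_2$ distinct from $P$, hence two additional maximal cones $\mathcal C_{P_1},\mathcal C_{P_2}\subseteq C$. To obtain a fourth cone, I would apply \Cref{lem:triangulation-in-bicolored-trees} a second time, now to $\mathcal C_{P_1}\subseteq C$: this is valid because $P_1$ is itself a maximal bicolored caterpillar tree (the swap preserves the spine structure) and because the local data at its unchanged end is exactly the data needed to re-run the lemma. The second application produces a maximal cone $\mathcal C_{P_{12}}\subseteq C$, corresponding to a caterpillar $P_{12}$ in which both end-swaps have been performed.

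The remaining step is to verify that $P, P_1, P_2, P_{12}$ are pairwise distinct combinatorial types, which I would do by bookkeeping on the cherries at the two ends of the spine: any two of the four trees differ in at least one cherry. Since distinct combinatorial types of maximal bicolored trees correspond to distinct maximal cones of $\Phspace$, this produces four distinct cones of the triangulation inside $C$. The main subtlety is justifying the iterated application of \Cref{lem:triangulation-in-bicolored-trees}---in particular, confirming that after the first swap the tree $P_1$ still lies in $C$ and still has the cherry/neighbor structure at its other end needed to invoke the lemma---but both points follow directly from the conclusion of the first application and from the fact that the swap only alters leaves local to one end of the spine.
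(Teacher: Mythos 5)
Your proof is correct and follows the same approach the paper intends: apply \Cref{lem:triangulation-in-bicolored-trees} independently at the two ends of a maximal bicolored caterpillar spine (after exchanging the roles of $R$ and $G$ when needed), observe that the swaps are local to disjoint ends (using $d,n\geq 3$), and conclude that the four resulting combinatorial types $P,P_1,P_2,P_{12}$ give four distinct maximal cones of $\Phspace$ inside $C$. One small imprecision: your aside that ``any refinement is again a caterpillar'' is not true in general, but this does not affect the argument since \Cref{rem:maximal-cones-rank-2} only needs to supply one maximal caterpillar refinement lying in $C$.
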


\subsection{Positivity and bipartite graphs}
We now describe the labels of $\prevar[2]= \tropdet[2]$, which were introduced in \Cref{sec:labels}. In particular, we show that for $r=2$, even though different cones might have the same label, the labels detect positivity, i.e. when a cone lies in $\prevarplus[2]$. Recall from \Cref{th:rank-2-positive-generators} that $\prevarplus[2]=\tropdetplus[2]$, so this criterion also applies to positive cones of the tropical determinantal variety of rank $2$. By \Cref{rem:maximal-cones-rank-2}, it suffices to consider maximal cones.

\begin{lemma}\label{lem: edges in label contain elementary splits}
	Let $C\subseteq \tropdet[2]$ be a maximal cone and $\mathcal{C}_1,\dots\mathcal{C}_m \subseteq \Phspace$ be maximal cones of the space of bicolored phylogenetic trees triangulating $C$. Let $\Ph_k$ be the combinatorially unique (maximal) bicolored phylogenetic tree corresponding to $\mathcal C_k$. Then 
	$$E(\Gc) \supseteq \set{r_i g_j \mid \{i,j\} \text{ is an elementary bicolored split of } \Ph_k \text{ for some } k\in [m]}.$$
\end{lemma}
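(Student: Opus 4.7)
The plan is to show that for each elementary bicolored split $\{r_i,g_j\}$ of some tree $P_k$, the edge $r_i g_j$ lies in $E(\Gc)$, equivalently $r_i g_j\notin E(\G)$. Fix a matrix $A\in\interior(\mathcal{C}_k)$ and let $L$ be the tropical line through its columns in $\TP^{d-1}$. Following \Cref{construction:bicolored-phylogenetic-trees}, the elementary split $\{r_i,g_j\}$ translates geometrically into the assertion that the column $A_j$ lies strictly in the interior of the unbounded ray $R_i$ of $L$ in direction $e_i$, and is the outermost column on this ray, i.e.\ the column farthest from the vertex $v_0$ of $L$ at which $R_i$ emanates. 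Writing $A_j=v_0+t\,e_i$ in $\TP^{d-1}$, we then have $t>0$.

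The key technical step is the inequality
\[
  A_{ij}-A_{i'j}\;>\;A_{ij'}-A_{i'j'}
\]
for every $j'\neq j$ and every $i'\in[d]\setminus\{i\}$. To establish it, I would analyse the piecewise-linear function $f(x)=x_i-x_{i'}$, which is well defined on $\TP^{d-1}$, restricted to $L$: along $R_i$ away from $v_0$, $f$ strictly increases with slope $+1$; along every other edge of $L$ leaving $v_0$, whose primitive direction has the form $e_S$ with $i\notin S$, the coordinate $x_i$ is constant while $x_{i'}$ grows only if $i'\in S$, so $f$ is non-increasing. Inducting along the tree structure of $L$ one obtains $f(x)\leq f(v_0)$ for every $x\in L\setminus\interior(R_i)$, while for any $A_{j'}\in R_i$ closer to $v_0$ one has $f(A_{j'})<f(A_j)$ directly. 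Either way, $f(A_{j'})<f(A_j)$, which is exactly the key inequality.

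To finish, pick any $I=\{i,i_1,i_2\}\ni i$ and $J=\{j,j_1,j_2\}\ni j$ and consider the tropical determinant of the submatrix $M=A[I,J]$. The two permutations $\sigma^{IJ}$ with $\sigma^{IJ}(i)=j$ contribute the terms
\[
  T_1=A_{ij}+A_{i_1j_1}+A_{i_2j_2},\qquad T_2=A_{ij}+A_{i_1j_2}+A_{i_2j_1}.
\]
The comparisons
\[
  T_1-\bigl(A_{ij_1}+A_{i_1j}+A_{i_2j_2}\bigr)=(A_{ij}-A_{i_1j})-(A_{ij_1}-A_{i_1j_1}),
\]
\[
  T_2-\bigl(A_{ij_2}+A_{i_1j}+A_{i_2j_1}\bigr)=(A_{ij}-A_{i_1j})-(A_{ij_2}-A_{i_1j_2})
\]
are both strictly positive by the key inequality, so neither $T_1$ nor $T_2$ attains the minimum of the tropical determinant of $M$. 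Hence no permutation in $\Lambda(I,J)$ sends $i$ to $j$, and since $I,J$ were arbitrary the edge $r_ig_j$ is never contributed to $\G$, giving $r_ig_j\in E(\Gc)$. The main obstacle is the geometric unpacking of ``elementary split'' into the precise statement that $A_j$ is the outermost column along $R_i$, together with the monotonicity argument yielding the key inequality; once these are in place, the conclusion reduces to pairwise comparisons of terms in the $(3\times 3)$ tropical determinant.
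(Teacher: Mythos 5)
Your proof is correct, but it takes a genuinely different route from the paper's. The paper argues entirely on the polyhedral side: an elementary bicolored split $\{r_i,g_j\}$ corresponds to the ray generator $E_{ij}$ of $\mathcal C_k$, which is shown to be an extremal ray of $C$ and hence of each $C^{IJ}$ (a cone of the normal fan of the embedded Birkhoff polytope, all of whose extremal rays have the form $\cone(E_{kl})$), and being a ray of $C^{IJ}$ is by construction exactly the statement $r_i g_j\in E(\Gc[C^{IJ}])$. You instead unpack the metric side: the elementary split is translated into $A_j$ being the strictly outermost column on the unbounded ray of the tropical line $L$ in direction $e_i$, and a monotonicity argument for the function $x_i-x_{i'}$ along $L$ yields $A_{ij}-A_{i'j}>A_{ij'}-A_{i'j'}$, from which no permutation sending $i\mapsto j$ can minimize any $(3\times3)$ tropical minor of $A$. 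The two approaches are essentially dual to each other: the paper leans on the normal-fan structure of the Birkhoff polytope and is shorter, while yours is more self-contained and makes the tropical-geometric content of the combinatorial claim visible. One step you should flag explicitly: you implicitly use that $\interior(\mathcal C_k)\subseteq\interior(C)\subseteq\interior(C^{IJ})$, so that for your chosen $A$ the set $\Lambda(I,J)$ is exactly the set of permutations minimizing the corresponding tropical minor; this follows from $C^{IJ}$ being the inclusion-minimal cone of $\trop(V(f^{IJ}))$ containing $C$.
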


\begin{proof}
	The rays of the space of bicolored phylogenetic trees are in bijection with bicolored splits $(A,B)$, i.e. trees with one internal edge partitioning the set of leaves into two parts $A$ and $B$, such that both parts contain leaves of both colors (cf. \Cref{sec:appendix-rays-and-cones}). As a matrix in $\RR^{d\times n}$, a ray generator (modulo lineality space) can be given as $\sum_{i,j \in A} E_{ij}, |A|\leq |B|$.
	If $A = \{i,j\}$ is an elementary split, then $E_{ij}$ spans a ray of some $\mathcal C_k$, so $\cone(E_{ij})$ is contained in $C$. Any point except the rays of $C$ are nontrivial nonnegative combinations of rays of $C$. Thus, $\cone(E_{ij})$ is also an extremal ray of $C$. It follows that 
	\[
	\set{r_i g_j \mid E_{ij} \text{ spans a ray of } C} \supseteq
	\set{r_i g_j \mid \{i,j\} \text{ is an elementary split of } \Ph_k \text{ for some } k\in [m]}.
	\]

	Let $C^{IJ} \subseteq \trop(V(f^{IJ}))$ be the inclusion-minimal cone containing $C$. Then $\cone(E_{ij}) \subseteq C^{IJ}$. Since $C^{IJ}$ is a cone in the normal fan of $B_{3}^{IJ}$, all rays of $C^{IJ}$ are of the form $\cone(E_{kl})$. Hence, $\cone(E_{ij})$ cannot be written as a nontrivial nonnegative combination of rays of $C^{IJ}$, and so $\cone(E_{ij})$ is a ray of $C^{IJ}$. By construction, for the cone $C^{IJ}$ holds 
	\[
		E(\Gc[C^{IJ}]) = \{r_i g_j \mid E_{ij} \text{ spans a ray of } C^{IJ}\},
	\]
	and $r_i g_j$ is an edge in $\Gc$ if and only if it is contained in $\Gc[C^{IJ}]$ for all $I,J$ such that $i \in I$ and $j \in J$. Thus,
	$$
	E(\G^c)) \supseteq \set{r_i g_j \mid E_{ij} \text{ spans a ray of } C}.
	$$

\end{proof}

\begin{example}
	Consider the maximal cone $C$ from \Cref{ex:label}. The edges in $\Gc$ are $r_1 g_1, r_2 g_2, r3 g_3$ and $r_3 g_4$. Let $A \in C$ and $P$ be the bicolored phylogenetic tree corresponding to $A$.
	\Cref{lem: edges in label contain elementary splits} implies that these are all the possible candidates for elementary splits of bicolored phylogenetic trees in $C$. Note that the splits corresponding to the edges $r_3 g_3$ and $r_3 g_4$ are not compatible. Thus, there exists no phylogenetic tree having both as elementary splits simultaneously.
\end{example}

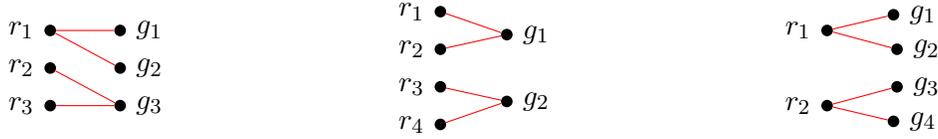
\begin{figure}
	\begin{subfigure}{0.33\textwidth}
		\centering
		\begin{tikzpicture}
			[
			vertex/.style={circle, fill, minimum size=1em, scale=0.4},
			]
			\node[vertex,label=left:{$r_1$}] (r1) at (0,1) {};
			\node[vertex,label=left:{$r_2$}] (r2) at (0,.5) {};
			\node[vertex,label=left:{$r_3$}] (r3) at (0,0) {};
			\node[vertex, right = 2em of r1, label=right:{$g_1$}] (c1) {};
			\node[vertex, right = 2em of r2, label=right:{$g_2$}] (c2) {};
			\node[vertex, right = 2em of r3, label=right:{$g_3$}] (c3) {};
			\draw[color=red] (c1) -- (r1) -- (c2);
			\draw[color=red] (r2) -- (c3) -- (r3);
		\end{tikzpicture}
	\end{subfigure}
	\begin{subfigure}{0.32\textwidth}
		\centering
		\begin{tikzpicture}
			[
			vertex/.style={circle, fill, minimum size=1em, scale=0.4},
			]
			\node[vertex,label=left:{$r_1$}] (r1) at (0,1) {};
			\node[vertex,label=left:{$r_2$}] (r2) at (0,.5) {};
			\node[vertex,label=left:{$r_3$}] (r3) at (0,0) {};
			\node[vertex,label=left:{$r_4$}] (r4) at (0,-.5) {};
			\node[vertex, below right = 0.5em and 2em of r1, label=right:{$g_1$}] (c1) {};
			\node[vertex, above right = 0.5em and 2em of r4, label=right:{$g_2$}] (c4) {};
			\draw[color=red] (r1) -- (c1) -- (r2);
			\draw[color=red] (r3) -- (c4) -- (r4);
		\end{tikzpicture}
	\end{subfigure}
	\begin{subfigure}{0.33\textwidth}
		\centering
		\begin{tikzpicture}
			[
			vertex/.style={circle, fill, minimum size=1em, scale=0.4},
			]
			\node[vertex,label=left:{$r_1$}] (r1) at (0,0.75) {};
			\node[vertex,label=left:{$r_2$}] (r4) at (0,-.25) {};
			\node[vertex, above right = 0.25 em and 2em of r1, label=right:{$g_1$}] (c1) {};
			\node[vertex, right = 2em of r2, label=right:{$g_2$}] (c2) {};
			\node[vertex, right = 2em of r3, label=right:{$g_3$}] (c3) {};
			\node[vertex, below right = 0.25 em and 2em of r4, label=right:{$g_4$}] (c4) {};
			\draw[color=red] (c1) -- (r1) -- (c2);
			\draw[color=red] (c3) -- (r4) -- (c4);
		\end{tikzpicture}
	\end{subfigure}
	\caption{The possible complements $\Gc$ of positive labels (without isolated vertices).}
	\label{fig:positive-labels}
\end{figure}

\begin{theorem}\label{th:positive-labels}
	Let $\G$ be the label of a maximal cone $C\subseteq \tropdet[2]$. Then $C$ is positive if and only if the complement of $\G$ consists of $4$ edges which form two disjoint paths of length $2$ (as shown in \Cref{fig:positive-labels}), and isolated vertices.
\end{theorem}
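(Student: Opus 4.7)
The plan is to leverage the triangulation of $C$ by the fan $\Phspace$ of bicolored phylogenetic trees and to build on \Cref{lem: edges in label contain elementary splits}, which links elementary bicolored splits (cherries) to edges of $\Gc$.

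For the forward direction, I will assume $C$ is positive. By \Cref{cor:positive-trees-rk-2}, every maximal bicolored tree $P_k$ in the triangulation of $C$ is a caterpillar, and therefore has exactly two cherries. Since every split in a bicolored tree separates both colors, each cherry has the form $\{r_i, g_j\}$ with one red and one green leaf, and by \Cref{lem: edges in label contain elementary splits} it contributes an edge $r_i g_j$ to $\Gc$. By \Cref{cor: triangulation into four parts}, at least four caterpillars appear in the triangulation; these arise via the NNI moves of \Cref{lem:triangulation-in-bicolored-trees} carried out independently at each of the two ends of the caterpillar. The NNI at a single end replaces a cherry $\{r_i, g_j\}$ by a cherry sharing exactly one leaf with it (the leaf whose color is opposite to that of the swappable adjacent leaf), so the two cherries arising at that end give two edges of $\Gc$ meeting at a common vertex, i.e., a path of length two. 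Since the two ends of a caterpillar involve disjoint sets of cherry leaves, they contribute two disjoint paths of length two in $\Gc$. To rule out further edges, I will argue that for a maximal cone of $\tropdet[2]$ the edges of $\Gc$ correspond bijectively to the rays of $C$ of the form $\cone(E_{ij})$: every ray of $C$ lies in some $\mathcal C_{P_k}$, and any ray of the form $\cone(E_{ij})$ inside a $\mathcal C_{P_k}$ corresponds to an elementary split, hence a cherry of $P_k$.

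For the converse, suppose $\Gc$ has the prescribed structure. For every $\mathcal C_{P_k}$ triangulating $C$, the two cherries of $P_k$ are disjoint edges of $\Gc$ by \Cref{lem: edges in label contain elementary splits}. Any two disjoint edges among the four given ones must come one from each of the two paths of length two, so each $P_k$ has exactly two cherries and is therefore a maximal caterpillar. By \Cref{cor:positive-trees-rk-2}, every $\mathcal C_{P_k}$ is positive, and since $C = \bigcup_k \mathcal C_{P_k}$, it follows that $C$ itself is positive.

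The main obstacle will be the exact correspondence between edges of $\Gc$ and rays of $C$ needed in the forward direction: \Cref{lem: edges in label contain elementary splits} supplies only one inclusion, and the reverse inclusion requires unpacking the construction of $\Gc$ via the embedded Birkhoff polytopes $B_3^{IJ}$ together with the equality $\tropdet[2] = \prevar[2]$ guaranteed by \Cref{thm:shitov}. Once this correspondence is in place, both directions of the equivalence follow from the tree combinatorics outlined above.
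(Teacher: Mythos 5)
Your backward direction is essentially the paper's argument, rephrased: you use pairwise disjointness of cherries to cap each $P_k$ at two elementary splits (rather than the paper's explicit ``any three of these four splits contain an incompatible pair''), and then invoke \Cref{cor:positive-trees-rk-2}. That part is fine, though you should state explicitly that a maximal tree has \emph{at least} two cherries so that ``at most two'' becomes ``exactly two''.

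The forward direction, however, has a genuine gap that you partly acknowledge. Your plan is to directly identify $E(\Gc)$ with the cherries of the caterpillar trees $P_k$ in the triangulation of $C$, using \Cref{lem:triangulation-in-bicolored-trees} to describe how cherries shift between adjacent trees. This needs two things you do not supply: first, the reverse inclusion $E(\Gc)\subseteq\{r_ig_j\mid E_{ij}\text{ spans a ray of }C\}$, which \Cref{lem: edges in label contain elementary splits} does not give and which would require an argument through the embedded normal fans of $\Bembedded$; second, even granting that, your NNI accounting shows that \emph{at least} four cherries arise in the two-disjoint-paths pattern, but \Cref{cor: triangulation into four parts} gives only a lower bound on the number of triangulating cones, so you would still need to rule out additional cherries beyond the four coming from the two independent end-moves.

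The paper's forward direction sidesteps this entirely. It bounds $E(\Gc)$ \emph{from above} using graph-theoretic constraints: by \Cref{prop:labels-positive-subgraphs} every induced $3+3$ subgraph of $\G$ contains a $4$-cycle plus a disjoint edge, by \Cref{prop:labels-matchings-degrees} every vertex in $\Gc$ has degree at most $2$, and one then rules out a $P_4$ and a perfect matching in any $3+3$ induced subgraph by exhibiting that their complements lack the required subgraph. This forces $\Gc$ to be a disjoint union of paths of length at most $2$ with at most $4$ edges. The lower bound of $4$ edges then comes from combining \Cref{lem: edges in label contain elementary splits} with \Cref{cor: triangulation into four parts}. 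This route is self-contained in the toolkit the paper has already built and avoids the unresolved reverse inclusion in your proposal. If you want to salvage your approach, you would have to prove that every $E_{ij}$ appearing as a ray of all relevant $C^{IJ}$ actually lies in $C$ (which plausibly follows since the extra coordinates sit in the lineality space of the other $\trop(V(f^{IJ}))$), and then give an independent argument that the cherry set across the whole triangulation is exactly the four you describe. As written, this is not yet a proof.
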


\begin{proof}
	Let $\Gc$ be a graph consisting of $4$ edges which form two disjoint paths $p^1,p^2$. As first case, consider $p^1 = g_1r_1 g_2$ and $p^2 = r_2 g_3 r_3$. \Cref{lem: edges in label contain elementary splits} implies that the union of elementary splits of all bicolored phylogenetic trees in $C$ is contained in
	\[
	S = \set{ \set{r_1, g_1}, \set{r_1, g_2}, \set{r_2, g_3}, \set{r_3, g_3}}.
	\]
	Any subset of $S$ of size $3$ contains two splits that are not compatible (cf. \Cref{sec:appendix-rays-and-cones}). Hence, every maximal phylogenetic tree $\Ph$ in $C$ has at most $2$ elementary splits. At the same time, every maximal phylogenetic tree contains at least $2$ elementary splits, and this number is $2$ if and only if $\Ph$ is a caterpillar tree. Thus, each maximal phylogenetic tree in $C$ is a  caterpillar tree.
	The argument is similar for $p^1 = r_1g_1 r_2, p^2 = r_3 g_2 r_4$ and for the case $p^1 = g_1 r_1 g_2, p^2 = g_3 r_2 g_4$.
	In all of these cases,  \Cref{cor:positive-trees-rk-2} implies that $C$ is positive.

	Suppose $C$ is a positive cone.
	By \Cref{prop:labels-positive-subgraphs}, every induced subgraph $H$ on $3+3$ vertices contains a subgraph $\Gpos$ consisting of a $4$-cycle and a disjoint edge.
	For the remainder of this proof, we consider the bipartite complement $H^c$.
	By \Cref{prop:labels-matchings-degrees}, every vertex in $H^c$ has degree at most $2$. Note that $H^c$ cannot have a $P_4$ (\Cref{fig:forbidden-P4}) as a subgraph, since its complement (\Cref{fig:forbidden-P4-complement}) does not contain a graph isomorphic to $\Gpos$. Hence, $H^c$  (and therefore $\Gc$) consists of disjoint paths of length $3$ and isolated edges.
	
	\begin{figure} [h]
		\begin{subfigure}{0.49\textwidth}
			\centering
			\begin{tikzpicture}
				[
				vertex/.style={circle, fill, minimum size=1em, scale=0.4},
				]
				\node[vertex,label=left:{$r_1$}] (r1) at (0,1) {};
				\node[vertex,label=left:{$r_2$}] (r2) at (0,.5) {};
				\node[vertex,label=left:{$r_3$}] (r3) at (0,0) {};
				\node[vertex, right = 2em of r1, label=right:{$g_1$}] (c1) {};
				\node[vertex, right = 2em of r2, label=right:{$g_2$}] (c2) {};
				\node[vertex, right = 2em of r3, label=right:{$g_3$}] (c3) {};
				\draw[color=red] (c1) -- (r1) -- (c2) -- (r2);
			\end{tikzpicture}
			\caption{}
			\label{fig:forbidden-P4}
		\end{subfigure}
		\begin{subfigure}{0.49\textwidth}
			\centering
			\begin{tikzpicture}
				[
				vertex/.style={circle, fill, minimum size=1em, scale=0.4},
				]
				\node[vertex,label=left:{$r_1$}] (r1) at (0,1) {};
				\node[vertex,label=left:{$r_2$}] (r2) at (0,.5) {};
				\node[vertex,label=left:{$r_3$}] (r3) at (0,0) {};
				\node[vertex, right = 2em of r1, label=right:{$g_1$}] (c1) {};
				\node[vertex, right = 2em of r2, label=right:{$g_2$}] (c2) {};
				\node[vertex, right = 2em of r3, label=right:{$g_3$}] (c3) {};
				\draw[color=black] (r1) -- (c3) -- (r2) -- (c1) -- (r3) -- (c2);
				\draw[color=black] (c3) -- (r3);
			\end{tikzpicture}
			\caption{}
			\label{fig:forbidden-P4-complement}
		\end{subfigure}
		\caption{}
	\end{figure}
	
	Similarly, $H^c$ cannot contain a perfect matching (3 isolated edges as shown in \Cref{fig:forbidden-C6-complement}), since the complement (\Cref{fig:forbidden-C6}) does not contain a graph isomorphic to $\Gpos$.
	
	\begin{figure} [h]
		\begin{subfigure}{0.49\textwidth}
			\centering
			\begin{tikzpicture}
				[
				vertex/.style={circle, fill, minimum size=1em, scale=0.4},
				]
				\node[vertex,label=left:{$r_1$}] (r1) at (0,1) {};
				\node[vertex,label=left:{$r_2$}] (r2) at (0,.5) {};
				\node[vertex,label=left:{$r_3$}] (r3) at (0,0) {};
				\node[vertex, right = 2em of r1, label=right:{$g_1$}] (c1) {};
				\node[vertex, right = 2em of r2, label=right:{$g_2$}] (c2) {};
				\node[vertex, right = 2em of r3, label=right:{$g_3$}] (c3) {};
				\draw[color=red] (c1) -- (r1); 
				\draw[color=red](c2) -- (r2);
				\draw[color=red](c3) -- (r3);
			\end{tikzpicture}
			\caption{}
			\label{fig:forbidden-C6-complement}
		\end{subfigure}
		\begin{subfigure}{0.49\textwidth}
			\centering
			\begin{tikzpicture}
				[
				vertex/.style={circle, fill, minimum size=1em, scale=0.4},
				]
				\node[vertex,label=left:{$r_1$}] (r1) at (0,1) {};
				\node[vertex,label=left:{$r_2$}] (r2) at (0,.5) {};
				\node[vertex,label=left:{$r_3$}] (r3) at (0,0) {};
				\node[vertex, right = 2em of r1, label=right:{$g_1$}] (c1) {};
				\node[vertex, right = 2em of r2, label=right:{$g_2$}] (c2) {};
				\node[vertex, right = 2em of r3, label=right:{$g_3$}] (c3) {};
				\draw[color=black] (r1) -- (c3) -- (r2) -- (c1) -- (r3) -- (c2) -- (r1);
			\end{tikzpicture}
			\caption{}
			\label{fig:forbidden-C6}
		\end{subfigure}
		\caption{}
	\end{figure}
	
	Thus, $\Gc$ has at most $4$ edges which form two disjoint paths of length $2$.
	However, by \Cref{lem: edges in label contain elementary splits} the edges of $\Gc$ contain the union of all elementary splits of trees in $C$.
	\Cref{cor: triangulation into four parts} implies that there are at least $4$ distinct such trees and hence at least $4$ distinct elementary splits.
	Therefore, the number of edges of $\Gc$ is $4$.
\end{proof}

\subsection{Bicolored trees and the tropical Grassmannian $\Gr$}\label{sec:tropical-grassmannian}

We now investigate the striking resemblance between $\tropdet[2]$, (its triangulation given by) the moduli space $\Phspace$ of bicolored phylogenetic trees on $d+n$ leaves, and the tropical Grassmannian $\Gr$, the moduli space of ordinary phylogenetic trees.
It was shown in  \cite{speyer_tropicalgrassmannian} that a tropical Plücker vector $p\in \Gr$ gives a tree metric by
\begin{equation} \label{eq:Pluecker-vector-tree-metric}
- p_{ij} = \text{ length of the path between leaves } i \text{ and } j,
\end{equation}
where the length of a path is the sum of the lengths of all edges of the path and the length of the leaves $j_1, j_2$.
A point $p\in\TP^{\binom{d+n}{2}}$ is a tropical Plücker vector if and only if it satisfies the $3$-term Plücker relation (or \emph{$4$-point condition)}
\[
\min\{p_{ij}+p_{kl}, p_{il}+p_{jk}, p_{ik}+p_{jl}\} \text{ is attained at least twice}
\]
for all distinct $i,j,k,l \in [d+n].$ 
The maximal cones of the tropical Grassmannian $\Gr$ correspond to phylogenetic trees with $d+n-3$ internal edges and $d+n$ leaves with labels $\{1,\dots,d+n\} = [d+n]$.
The rays of $\Gr$ (modulo lineality space) correspond to partitions of the leaves into two parts
(a more detailed description of the fan structure of $\Gr$ can be found in \Cref{excursion:lineality-spaces}).
It is thus natural to raise the question of the connection between the space of bicolored phylogenetic trees $\Phspace$ and $\Gr$.

\begin{definition}\label{def:trees-d-n-coloring}
	Let $\Ph$ be a phylogenetic tree on $d+n$ leaves.
	A \emph{split} of $\Ph$ is a partition of the leaves into two parts induced by the deletion of an internal edge of $\Ph$.
	A split is \emph{elementary} if one of the two parts has only $2$ elements.
	A \emph{(d,n)-bicoloring} of $\Ph$ is a $2$-coloring of the leaves into $d$ green and $n$ red leaves such that no split of $\Ph$ has a monochromatic part.
\end{definition}

\begin{lemma}\label{lem:number-of-elementary-splits}
	If $d+n \geq 5$, then a maximal phylogenetic tree on $d+n$ leaves has at most $\frac{d+n}{2}$ elementary splits.
\end{lemma}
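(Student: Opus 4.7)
The plan is to identify elementary splits of a maximal (trivalent) phylogenetic tree with a simple structural feature, namely \emph{cherries}, defined as internal vertices adjacent to exactly two leaves. Once this is done, the bound follows by counting: distinct cherries are incident to disjoint pairs of leaves, so there cannot be more than $\lfloor (d+n)/2 \rfloor$ of them.

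First I would establish the correspondence. Let $e = vw$ be an internal edge of a maximal tree $P$ on $N = d+n$ leaves, and let $T_v$ be the subtree containing $v$ after removing $e$. Since $P$ is trivalent, $v$ has degree $2$ in $T_v$ while all other vertices inherit their degrees from $P$. A short handshake-plus-Euler count on $T_v$ (writing the number of edges in two ways: $L + I - 1$ as a tree on $L + I$ vertices, and $(L + 3(I-1) + 2)/2$ by summing degrees, where $L$ is the number of leaves of $P$ in $T_v$ and $I$ is the number of degree-$3$ vertices of $P$ in $T_v$) yields $L = I + 1$. Hence $L = 2$ forces $I = 1$, meaning $T_v$ consists of $v$ together with its two leaf neighbors, so $v$ is a cherry. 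Conversely, each cherry $v$ has a unique internal neighbor $w$, and the edge $vw$ produces a split with exactly two leaves on the $v$-side.

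Next I would use the assumption $d + n \geq 5$ to ensure this correspondence is a bijection on the level of splits. For $N \geq 5$ an internal edge cannot split the leaves into two parts of size $2$ each (which would force $N = 4$), so the size-$2$ side of an elementary split is uniquely determined, hence determines a unique cherry. This is the only place the hypothesis $d+n \geq 5$ is used, and it is the main subtlety to watch: without it, one internal edge can correspond to two cherries and the counting argument overcounts.

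Finally, since every leaf of $P$ has exactly one neighbor, two distinct cherries cannot share any leaf neighbor. Therefore the total number of cherries is at most $\lfloor N/2 \rfloor \leq N/2 = (d+n)/2$, which by the bijection above bounds the number of elementary splits by the same quantity. I expect no serious obstacles beyond carefully justifying the $L = I + 1$ identity and handling the boundary case $N = 4$ to motivate the hypothesis.
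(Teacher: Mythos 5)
Your proposal is correct and rests on the same double-counting idea as the paper's proof: you count incidences between leaves and the size-$2$ sides of elementary splits (the paper calls the associated internal edges ``outer edges''; you package the same data as cherries) and observe that each such split contributes $2$ leaves while each leaf is claimed by at most one split, giving $2k \le d+n$. The extra Euler/handshake lemma showing $L = I+1$ (hence $L=2 \Rightarrow I=1$) makes explicit a structural fact the paper takes for granted, and your discussion of the $d+n\ge 5$ hypothesis clarifies exactly where the bijection between elementary splits and cherries would otherwise fail; but the core argument is identical.
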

The proof of this lemma can be found in \Cref{sec:appendix-proofs-tropical-grassmannian}.

\begin{remark}
The number of elementary splits is minimized by caterpillar trees (which have precisely $2$ elementary splits) and the bound in \Cref{lem:number-of-elementary-splits} is attained by snowflake trees, which are trees with a unique internal vertex incident to all internal edges. If $d+n = 4$, then there exists a unique tree, which has precisely one elementary split. If the tree admits a $(d,n)$-coloring, then $d=n=2$, since every the split cannot have a monochromatic part. If $d+n \leq 3$, then there exists no elementary split.
\end{remark}

There is a simple characterization of the existence of a bicoloring in terms of the number of leaves and elementary splits. 

\begin{prop}\label{prop:numer-of-bicolorings}
	Let $\Ph$ be a maximal phylogenetic tree on $d+n$ leaves for some fixed $d,n\in\N$ and $k$ be the number of elementary splits of $\Ph$.
	Then $\Ph$ has a $(d,n)$-bicoloring if and only if $k\leq \min (d,n)$.
        In this case, the number of possible $(d,n)$-bicolorings is $2^k {d+n-2k \choose d-k}$.

	In particular, for any phylogenetic tree $\Ph$ on $m$ leaves, there exist $d,n\in \N$ such that $d+n = m$ and $\Ph$ has a $(d,n)$-bicoloring.
\end{prop}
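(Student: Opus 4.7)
The strategy is to observe that for $d+n \geq 5$ (the small cases $d+n \leq 4$ are treated in the remark above), the $k$ elementary splits of $P$ correspond bijectively to the $k$ \emph{cherries} of $P$, where a cherry is an internal vertex adjacent to exactly two leaves. Removing the unique internal edge at a cherry yields the elementary split whose two-leaf part is precisely the cherry pair, and since $P$ is binary these $k$ cherries are pairwise disjoint as pairs of leaves.

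For necessity, each elementary split $\{i,j\}$ must be non-monochromatic in any $(d,n)$-bicoloring, so the two leaves of every cherry receive different colors. This forces at least $k$ green and $k$ red leaves, hence $k \leq \min(d,n)$. Conversely, assuming $k \leq \min(d,n)$, I would construct candidate colorings by first choosing which leaf of each cherry is green (giving $2^k$ options) and then distributing the remaining $d-k$ greens and $n-k$ reds arbitrarily among the $d+n-2k$ non-cherry leaves (giving $\binom{d+n-2k}{d-k}$ options), for a total candidate count of $2^k\binom{d+n-2k}{d-k}$.

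The central step, and the main obstacle, is verifying that every such candidate is in fact a valid $(d,n)$-bicoloring, i.e., that no non-elementary split is monochromatic. For this I would prove the sublemma: for every split $(A,B)$ of $P$ with $|A| \geq 2$, the set $A$ contains both leaves of some cherry of $P$. Cut the internal edge defining the split; the $A$-side is a tree $T_A$ with leaves exactly $A$, one distinguished vertex $v$ of degree $2$ at the cut endpoint, and all other internal vertices of degree $3$. The handshake identity then gives that $T_A$ has exactly $|A|-1$ internal vertices, while the leaves contribute $|A|$ leaf-to-neighbor incidences; by pigeonhole some internal vertex is incident to at least two leaves of $T_A$. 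If $|A|=2$ this vertex is $v$ and returns the cherry pair directly; for $|A|\geq 3$ it must be some $w\neq v$ of degree $3$, since a degree-$2$ vertex with two leaf neighbors would force $|A|=2$. The three neighbors of $w$ in $T_A$ coincide with those in $P$ (no edge at $w$ was removed), so $w$ is a cherry of $P$ with both leaves in $A$. Hence a monochromatic $A$ would contain a bichromatic cherry, a contradiction, so every candidate coloring is a valid $(d,n)$-bicoloring.

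For the last assertion, given a maximal $P$ on $m$ leaves with $m\geq 5$, set $d = \lfloor m/2 \rfloor$ and $n = \lceil m/2 \rceil$. By \Cref{lem:number-of-elementary-splits} we have $k \leq m/2$, and integrality gives $k \leq \lfloor m/2 \rfloor = \min(d,n)$, so the first part produces a $(d,n)$-bicoloring; for $m\leq 4$ the remark above provides the explicit choices.
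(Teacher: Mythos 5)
Your argument follows essentially the same route as the paper — count the $2^k$ ways to two-color the cherry pairs, then the $\binom{d+n-2k}{d-k}$ ways to distribute the remaining leaves, and verify every such coloring is a valid $(d,n)$-bicoloring — but you supply a proof of the verification step that the paper leaves as an assertion. The paper simply remarks that ``for any such coloring, the removal of any internal edge will split the colored tree into $2$ parts, with leaves of both colors in both parts,'' with no justification. Your sublemma (each side $A$ of any split of a maximal tree with $|A|\geq 2$ contains a full cherry, proved by counting leaf--neighbor incidences against the $|A|-1$ internal vertices of $T_A$) is a correct and genuinely useful filling of that gap, and it is the mathematical core of the sufficiency direction.

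One incompleteness: the final \emph{``in particular''} assertion is stated for \emph{any} phylogenetic tree, but your proof of it only treats the maximal case (you explicitly ``given a maximal $P$''). The paper's proof addresses non-maximal trees separately by passing to inclusion-minimal splits. Your argument can be repaired cheaply: a $(d,n)$-bicoloring of any binary refinement of $P$ is automatically a $(d,n)$-bicoloring of $P$, since the splits of $P$ form a subset of those of the refinement, so the maximal case suffices. You should also note that for $d+n=4$ the cherry--elementary-split bijection fails (the two cherries define the same split), so appealing to the remark for the \emph{existence} claim is fine but the \emph{count} $2^k\binom{d+n-2k}{d-k}$ there needs a one-line direct check rather than a reference to your cherry argument.
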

The proof of this proposition can again be found in \Cref{sec:appendix-proofs-tropical-grassmannian}. We illustrate the existence of bicolorings with an example.

\begin{example}\label{ex:tree-on-five-leaves}
	\begin{figure}
		\centering
			\begin{tikzpicture}[scale=0.75]
			\node[anchor=east] at (-1,1) {$1$};
			\node[anchor=east] at (-1,-1) {$2$};
			\node[anchor=north] at (1.5,-1) {$3$};
			\draw[thick] (3,0) -- (4,1);
			\draw[thick] (3,0) -- (4,-1);
			\node[anchor=west] at (4,-1) {$4$};
			\node[anchor=west] at (4,1) {$5$};
			\draw[thick] (0,0) -- (3,0);
			\draw[thick] (0,0) -- (-1,1);
			\draw[thick] (0,0) -- (-1,-1);
			\draw[thick] (1.5,0) -- (1.5,-1);
		\end{tikzpicture} 
	\caption{The tree from \Cref{ex:tree-on-five-leaves}}
	\label{fig:example-tree-on-five-leaves}
	\end{figure}

	Consider the maximal tree on $5$ leaves as shown in \Cref{fig:example-tree-on-five-leaves}, with elementary splits $(\{1,2\},\{3,4,5\})$ and $(\{1,2,3\}, \{4,5\})$.
	We choose the partition $d+n = 2+3$, i.e., we want to color $2$ leaves in red and $3$ leaves in green.
	In order to obtain a bicoloring, we need to color one of the leaves $\{1,2\}$ in red, and one of the leaves $\{4,5\}$ in red, and color the remaining $3$ leaves in green.
	For the partition $1+4$, then there is no $(1,4)$-bicoloring of this tree, as every such $2$-coloring has at least one monochromatic elementary split.
\end{example}

\subsection{Bicoloring trees and back}
\label{sec:pluecker-bijection}

We now show that the combinatorial idea of ``bicoloring a tree'' can be made precise also on the algebraic level.
In this section we describe a map that, for each $(d,n)$-bicoloring of leaves $[d+n]$, establishes a bijection between the polyhedral fan $\Phspace$ and a suitable subfan of $\Gr$. On the level of trees, the map correspond to ``coloring a tree'' and its inverse to ``forgetting the colors''. A similar result was established in \cite[Lemma 2.10]{markwig_spacetropicallycollinear}. \Cref{th:pluecker-bijection} reveals that this map can be seen as a coordinate projection. \\

We first describe the subfan of $\Gr$, and the map from this subfan to $\Phspace$. Fix $d,n\in \N$ and let $R\sqcup G = [d+n]$ be a $2$-coloring of the leaves in color classes $(R,G)$ such that $|R|=d, |G|=n$.
We say that the coloring $(R,G)$ of the leaves is \emph{admissible} for $P$ if it is is a $(d,n)$-bicoloring (as defined in  \Cref{def:trees-d-n-coloring}), i.e. if for every split of $P$ both parts contain leaves of both colors.
Let now $\Grsub$ be the collection of cones in $\Gr$ corresponding to (uncolored) phylogenetic trees such that the coloring $(R,G)$ is admissible. Consider the coordinate projection
\begin{align}
	\begin{split}
		\pi^{(R,G)}:  \Grsub &\longrightarrow \Phspace \\ 
		(p_{ij})_{ij \in \binom{d+n}{2}} &\longmapsto (p_{ij})_{i \in R, j \in G}.
	\end{split}
\end{align}
We will show that the image of this map is indeed $\Phspace$. For a fixed uncolored tree $P$ and coloring $(R,G)$, let $\bicoltree$ denote the coloring of $P$ with respect to $(R,G)$.
Let $p \in \Grsub$, and denote by $P$ the uncolored, metric phylogenetic tree defined by $p$. Let $\pi^{(R,G)}(p) = A \in \Phspace$.  We say that $\pi^{(R,G)}$ \emph{preserves the combinatorial type of $P$} if the bicolored phylogenetic tree defined by $A$ has the same combinatorial type (sometimes also called \emph{tree topology}) as $P$. A tree is called a \emph{split tree} if it is has exactly one internal edge.

\begin{theorem}\label{th:pluecker-bijection}
	The map $\pi^{(R,S)}$ induces a bijection of fans $\Phspace$ and $\Grsub$, which preserves the combinatorial types of trees.
\end{theorem}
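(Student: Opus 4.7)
The plan is to analyze $\pi^{(R,G)}$ cone by cone, exploiting the identification of cones of $\Gr$ with combinatorial types of metric phylogenetic trees on $d+n$ leaves, together with the triangulation of $\tropdet[2]$ by the space of bicolored phylogenetic trees from \Cref{construction:bicolored-phylogenetic-trees}. For $p \in \Grsub$ in the interior of the cone of a tree $P$, the coordinate $-p_{ij}$ records the tree distance in $P$. The matrix $A = \pi^{(R,G)}(p)$ keeps only red--green distances, and since these are restrictions of a tree metric, its columns are tropically collinear in $\TP^{d-1}$; hence $A \in \tropdet[2]$. Applying \Cref{construction:bicolored-phylogenetic-trees} to $A$ then recovers $\bicoltree$: the tropical convex hull of the columns of $A$ reproduces the internal part of $P$, the columns themselves sit at the green leaves, and the unbounded rays become the red leaves with matching lengths. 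In particular, the splits of $P$ are in bijection with the bicolored splits of $\bicoltree$---admissibility of $(R,G)$ is precisely the requirement that no split become monochromatic---so $\pi^{(R,G)}$ preserves combinatorial types.

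To prove bijectivity I would exhibit the inverse ``forget the colors''. Given $A \in \Phspace$ with associated bicolored tree $\bicoltree$, stripping the coloring yields a metric phylogenetic tree $P$ on $d+n$ leaves for which $(R,G)$ is admissible by construction. The Plücker vector $p$ defined by letting $-p_{ij}$ be the tree distance in $P$ lies in $\Grsub$ and satisfies $\pi^{(R,G)}(p) = A$, giving surjectivity. For injectivity, if $\pi^{(R,G)}(p) = \pi^{(R,G)}(p')$, then both preimages yield the same bicolored tree $\bicoltree$ with identical edge lengths (since $\bicoltree$ is uniquely reconstructible from $A$), and hence the same underlying $P$; the remaining same-color coordinates $p_{ij}$ are then determined as sums of edge lengths along the unique $i$--$j$ path in $P$, forcing $p = p'$.

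The main obstacle is verifying that on each maximal cone the map is a \emph{linear} isomorphism of cones, not merely a bijection of their interiors. This reduces to checking that the ray generators on the $\Grsub$ side (one per split of $P$) project bijectively onto the ray generators on the $\Phspace$ side (one per bicolored split of $\bicoltree$), and that the leaf-length lineality of $\Gr$ surjects onto the row-plus-column lineality of $\tropdet[2]$. Both facts follow from the split bijection above together with the observation that the projection of a split vector $v_S$ has nonzero entries on exactly those pairs $(r,g) \in R \times G$ separated by $S$---a nonzero matrix by admissibility---and that this projected vector coincides with the ray generator of the corresponding bicolored split in $\Phspace$. Once this linear correspondence is established on every maximal cone (and hence on all faces), $\pi^{(R,G)}$ is the desired type-preserving isomorphism of fans.
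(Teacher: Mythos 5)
Your proposal is correct and takes essentially the same route as the paper. The core of the argument (your third paragraph) is exactly the paper's strategy: establish the bijection on lineality spaces (their Lemma~\ref{lem:bijection-lineality-space}) and on rays via the split-vector computation (their Lemma~\ref{lem:representation-split-trees-bicoloing} and Proposition~\ref{th:bijection-rays}), and then extend across each simplicial cone by linearity. Your first two paragraphs add a more informal metric/reconstruction discussion in the spirit of Markwig--Yu's Lemma~2.10, which the paper cites but does not reprove, replacing it with the explicit ray-representative computations; that is an organizational difference rather than a different proof.
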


The proof of this theorem is deferred to \Cref{sec:appendix-proofs-bicoloring-trees}. The structure of the proof is as follows. We first describe a ``nice'' representation (modulo lineality spaces of $\Grsub$ and $\Phspace$) of $p$ and $A$ (\Cref{lem:representation-split-trees-bicoloing}). We then establish the result for the lineality spaces of the fans (\Cref{lem:bijection-lineality-space}) and rays (\Cref{th:bijection-rays}). Finally, we deduce the statement of \Cref{th:pluecker-bijection}.

By \Cref{prop:numer-of-bicolorings}, for each cone $ C\subseteq\Gr$ there exists at least one choice of $R\sqcup G =[d+n]$ such that the projection of $p\in \interior(C)$ gives the respective bicoloring of the tree corresponding to $C$.
However, this choice of $R\cup G$ cannot be made globally, as the following example shows.

\begin{example}[A global bicoloring is impossible]\label{example:bicoloring-not-global}
	\begin{figure}
		\centering
		\begin{tikzpicture}[scale = .75]
			\node[anchor=east] at (-1,1) {$2$};
			\node[anchor=east] at (-1,-1) {$1$};
			\node[anchor=north] at (1,-1) { $3$};
			\draw[thick] (0,0) -- (2,0);
			\draw[thick, dashed](2,0) -- (3,0);
			\draw[thick] (0,0) -- (-1,1);
			\draw[thick] (0,0) -- (-1,-1);
			\draw[thick] (1,0) -- (1,-1);
			\draw[dashed,thick] (3.8,0) circle (2em);
			\node at (2,-2) {$P$};
		\end{tikzpicture}  \hspace{5 em}
		\begin{tikzpicture}[scale = .75]
			\node[anchor=east] at (-1,1) {$2$};
			\node[anchor=east] at (-1,-1) {$3$};
			\node[anchor=north] at (1,-1) { $1$};
			\draw[thick] (0,0) -- (2,0);
			\draw[thick, dashed](2,0) -- (3,0);
			\draw[thick] (0,0) -- (-1,1);
			\draw[thick] (0,0) -- (-1,-1);
			\draw[thick] (1,0) -- (1,-1);
			\draw[dashed,thick] (3.8,0) circle (2em);
			\node at (2,-2) {$P'$};
		\end{tikzpicture} 
		\caption{The trees from \Cref{example:bicoloring-not-global}.}
		\label{fig: example coloring is not global}
	\end{figure}
	
	Consider any uncolored phylogenetic tree $\Ph$ with labeled leaves $1,\dots,d+n$ and splits $(\{1,2\},[d+n]\setminus\{1,2\})$ and $(\{1,2,3\},[d+n]\setminus\{1,2,3\})$, as shown in \Cref{fig: example coloring is not global}.
	We can choose a coloring such that $1 \in R$ and $2,3 \in G$.
	The cone in $\Phspace$ containing $\Ph$ is adjacent to the cone containing $\Ph'$, where $\Ph'$ is defined by the same set of splits as $\Ph$, except that the first split $(\{1,2\},[d+n]\setminus\{1,2\})$ is replaced by the split $(\{2,3\},[d+n]\setminus\{2,3\})$.
	However, the coloring with $1 \in R$ and $2,3 \in G$  is not an admissible bicoloring of $\Ph'$, since the split $(\{2,3\},[d+n]\setminus\{2,3\})$ has a part that does not contain leaves of both color classes.
\end{example}

\begin{remark}
The inverse map $(\pi^{(R,G)})^{-1}$ can be interpreted as ``forgetting the colors'' of a bicolored tree.
Given a bicolored phylogenetic tree $\Ph$, we forget the colors relabeling the leaves with $[d+n]$.
The relabeling is not canonical.
For example, we can assign to the red leaves the labels in $[d]$ and assign the labels $d+j, j \in [n]$ to the green leaves.
\end{remark}

We note that the map $\pi^{(R,G)}$ does not preserve positivity.
The cones in the totally positive Grassmannian correspond to trees with clockwise ordered labels \cite{speyer_tropicaltotallypositive}.
There are examples of caterpillar trees with a labeling of the leaves that is not in clockwise order.
On the other hand, there are trees with clockwise ordered labels that are not caterpillar trees. 
It was described in \cite[Example 3.10]{fink_stiefeltropicallinear} that a $1$-dimensional tropical linear space is a Stiefel tropical linear space if and only is it is a caterpillar tree.
This gives us a characterization of the preimage of $\tropdetplus[2]$.

\begin{prop}
	Fix $(R,G)$ such that $R \sqcup G = [d+n]$ and let $\Sigma^{(R,G)} \subseteq \Grsub \subseteq \Gr$ be the subfan consisting of all Stiefel tropical linear spaces for which $(R,G)$ is an admissible $(d,n)$-bicoloring. Then $\pi^{(R,G)}(\Sigma^{(R,G)}) = \tropdetplus[2]$ induces a bijection of fans.
\end{prop}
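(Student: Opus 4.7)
The plan is to deduce this from the bijection $\pi^{(R,G)}\colon \Grsub \to \Phspace$ established in \Cref{th:pluecker-bijection} by showing that, under this bijection, the subfan $\Sigma^{(R,G)}$ of Stiefel tropical linear spaces maps exactly onto the subfan of positive bicolored phylogenetic trees, which in turn is identified with $\tropdetplus[2]$.

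First, I would recall the three identifications I need. By \Cref{th:pluecker-bijection}, $\pi^{(R,G)}$ is a bijection of fans that preserves the combinatorial type of the underlying tree. By \Cref{cor:positive-trees-rk-2}, a bicolored phylogenetic tree corresponds to a point in $\tropdetplus[2]$ if and only if it is a caterpillar tree, and hence the subfan of $\Phspace$ corresponding to caterpillar trees is exactly (the triangulation of) $\tropdetplus[2]$. Finally, by the characterization from \cite[Example 3.10]{fink_stiefeltropicallinear} cited just before the proposition, a $1$-dimensional tropical linear space is a Stiefel tropical linear space if and only if its underlying phylogenetic tree is a caterpillar tree; equivalently, $\Sigma^{(R,G)}$ consists precisely of those cones of $\Grsub$ whose underlying tree is a caterpillar.

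The key step is then to combine these three facts. Since $\pi^{(R,G)}$ preserves combinatorial types of trees, a cone $C \subseteq \Grsub$ lies in $\Sigma^{(R,G)}$ (caterpillar, admissibly bicolorable) if and only if its image $\pi^{(R,G)}(C)$ is a caterpillar cone of $\Phspace$, if and only if $\pi^{(R,G)}(C) \subseteq \tropdetplus[2]$. Restricting the bijection from \Cref{th:pluecker-bijection} to this collection of cones yields a bijection of fans $\Sigma^{(R,G)} \to \tropdetplus[2]$. The preservation of the fan structure (lineality, rays, and face relations) is inherited from \Cref{th:pluecker-bijection}, since we are just passing to a subfan on both sides.

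I do not expect a genuine obstacle here: all the heavy lifting has been done in \Cref{th:pluecker-bijection}, \Cref{cor:positive-trees-rk-2}, and the cited characterization of $1$-dimensional Stiefel tropical linear spaces. The only subtle point is to verify that ``caterpillar'' is well-defined on both sides in a compatible way --- that is, that an uncolored phylogenetic tree is a caterpillar exactly when its bicoloring under $\pi^{(R,G)}$ yields a bicolored caterpillar tree in the sense of \Cref{def:bicolored-splits}. This follows from the combinatorial type preservation in \Cref{th:pluecker-bijection}: forgetting the colors does not alter the internal structure of the tree, so the property ``every internal vertex is incident to at most two internal edges'' transfers faithfully across $\pi^{(R,G)}$.
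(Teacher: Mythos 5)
Your proposal is correct and follows essentially the same route as the paper: it combines the fan bijection of \Cref{th:pluecker-bijection} (which preserves combinatorial tree types), the caterpillar characterization of positivity from \Cref{cor:positive-trees-rk-2}, and the characterization of $1$-dimensional Stiefel tropical linear spaces as caterpillar trees, then restricts the bijection to the caterpillar subfans. Your closing remark that being a caterpillar tree is a color-independent property is a fair point of care, and it is exactly what makes the transfer across $\pi^{(R,G)}$ immediate.
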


\begin{proof}
	A $1$-dimensional tropical linear space is a Stiefel tropical linear space if and only is it is a caterpillar tree \cite[Example 3.10]{fink_stiefeltropicallinear}. Thus, \Cref{cor:positive-trees-rk-2} implies that every admissible $(d,n)$-bicoloring of a tree associated to a Stiefel tropical linear space belongs to the tropicalization of a nonnegative matrix and vice versa. By \Cref{th:pluecker-bijection}, $\pi^{(R,G)}$ is a bijection of fans $\Grsub$ and $\tropdet[2]$ that preserves combinatorial types.  Restricting $\pi^{(R,G)}$ therefore induces a bijection of $\Sigma^{(R,G)}$ and $\tropdetplus[2]$.
\end{proof}

\section{Rank 3}\label{sec:rank-3}

In this section, we show the extensions and limitations of the techniques for certifying positivity for $\tropdet[3]$. The main idea is to identify a criterion for a matrix $A$ not to be contained in the positive determinantal prevariety $\prevarplus[3]$ by identifying a $(4\times 4)$-minor, such that $A$ is not contained in the respective positive tropical hypersurface. As $\tropdetplus[3]\subseteq \prevarplus[3]$, we thereby also obtain a condition for $A$ not to be contained in $\tropdetplus[3]$.

As before, we consider the columns of a matrix $A \in \tropdet[3]$ as a point configuration of $n$ points in $\TP^{d-1}$. Due to the rank condition (\Cref{prop:kapranov-rank-points-on-hyperplane}), these points lie on a common tropical plane. 
We view a tropical plane as an embedded pure $2$-dimensional polyhedral complex in $\TP^{d-1}$. It has $d$ unbounded rays $r_1,\dots r_d$, where the slope of $r_i$ is in standard unit direction $e_i$. Furthermore it has bounded edges with edge directions $\sum_{i \in I} e_{i}$ for $I \subseteq [d], |I| \geq 2$. More precisely, a tropical plane is a subcomplex of the polyhedral complex that is dual to a regular matroid subdivision of the hypersimplex $$\Delta(d,3) = [0,1]^d \cap \set{x\in\RR^d \mid \sum_{i=1}^d x_i = 3}.$$ 
where in this subdivision of $\Delta(d,3)$ each maximal cell corresponds to a matroid of rank $3$.
We describe this subcomplex in more detail in \Cref{sec:bicolored-tree-arrangements}.

Let $A \in \tropdet[3]$ and consider the induced point configuration.
The matrix $A\in\tropdet[3]$ (or equivalently the corresponding point configuration) is \emph{generic} with respect to a tropical plane $E$ if every point $A_j$ lies on the interior of a $2$-dimensional face of $E$.
We call a $2$-dimensional face of the plane $E$ 
a \emph{marked face} if it contains a point of the point configuration in its interior.
Recall from \Cref{sec:geom-triangle-crit} that we call the $2$-dimensional faces of a tropical plane in $\TP^3$ the wings of the plane.

\subsection{Starship criterion for positivity}

We establish a condition on the local properties of a tropical plane based on \Cref{th:geometric-triangle-criterion} (\hyperref[th:geometric-triangle-criterion]{Geometric triangle criterion}). The idea is as follows: Let $A \in \tropdet[3]$ and $E$ be a tropical plane containing the columns of $A$. The matrix $A$ is non-positive if there exists a non-positive $(4 \times 4)$-submatrix. The \hyperref[th:geometric-triangle-criterion]{geometric triangle criterion} describes the associated point configuration of $4$ points in $\TP^3$.  We identify a projection of $E$ which selects such a $(4 \times 4)$-submatrix to certify non-positivity. 
The condition to identify the correct submatrix solely depends on the collection of marked faces, i.e. a local structure of the underlying tropical plane. Since a tropical plane is dual to a matroid subdivision of $\Delta(d,3)$, we thus argue via normal cones of faces of matroid polytopes, reducing this problem to a question about flags of flats of the respective matroids.

\begin{lemma}\label{lem:matroid-flats}
	Let $M$ be a matroid of rank $3$ on $n$ elements, and $H_1, H_2, H_3$ be distinct flats of rank $2$. If
	$H_1 \cap H_2 \cap H_3 = F$ is a flat of rank $1$, then $H_3 \not \subseteq H_1 \cup H_2$. 
\end{lemma}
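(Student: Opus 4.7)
The plan is to argue by contradiction, with the whole proof hinging on the intermediate claim that $H_1 \cap H_3 = F = H_2 \cap H_3$. Granting this, if one had $H_3 \subseteq H_1 \cup H_2$, then any $y \in H_3 \setminus F$ would lie in $H_1$ or in $H_2$, hence in $H_1 \cap H_3$ or in $H_2 \cap H_3$, both of which equal $F$ --- a contradiction. Such a $y$ exists because $F$ and $H_3$ are flats of different ranks ($1$ versus $2$), so the inclusion $F \subseteq H_3$ is strict.

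To prove the intermediate claim for each $i \in \{1,2\}$, I would show that $H_i \cap H_3$ is a rank-$1$ flat; since it contains $F$, this forces equality with $F$. The intersection of two flats is a flat, and it contains $F$, so its rank is at least $1$. It cannot have rank $2$: being contained in the rank-$2$ flat $H_i$, such an intersection would have to equal $H_i$ (a basis of the smaller is a basis of the larger, and the closure of a basis uniquely determines the flat), and the same reasoning with $H_3$ in place of $H_i$ would force $H_i = H_3$, contradicting the distinctness hypothesis. Hence $H_i \cap H_3$ has rank exactly $1$. Applying the same ``equal-rank containment implies equality'' principle to the rank-$1$ flats $F \subseteq H_i \cap H_3$ then yields $H_i \cap H_3 = F$.

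The only non-trivial ingredient is the general matroid fact that a flat contained in another flat of the same rank must coincide with it; I do not anticipate any real obstacle beyond this piece of standard bookkeeping, since the rest of the argument is elementary set theory and the contradiction is immediate.
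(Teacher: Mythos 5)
Your proof is correct and takes essentially the same approach as the paper. The paper also argues by contradiction, picks $h \in H_3 \setminus F$, assumes $h \in H_1$, and observes that $F \cup \{h\}$ has rank $2$ and sits inside both $H_1$ and $H_3$, forcing $H_1 = H_3$; this is exactly the content of your intermediate claim $H_i \cap H_3 = F$, which you isolate as a separate step but prove by the same ``equal-rank flats coincide'' principle.
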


\begin{proof}
  Assume for contradiction that $H_3 \subseteq H_1 \cup H_2$. Let $h \in H_3 \setminus F$.
  Then $h \in H_1$ or $h \in H_2$, and without loss of generality we can assume $h \in H_1$.
  Since $F$ is a flat, and $F \cup \{h\} \subseteq H_1$ we get that $\rk(F \cup \{h\})=2=\rk(H_1)$.
  Hence, $\text{span}(F \cup \{h\}) = H_1$.
  However, at the same time, since $F \cup \{h\} \in H_3$ we have that  $\rk(F \cup \{h\})=2=\rk(H_3)$, and hence $\text{span}(F \cup \{h\}) = H_3$. Thus, $H_1 = \text{span}(F \cup \{h\}) = H_3$, contradicting the assumption that $H_1, H_2, H_3$ are distinct.
\end{proof}

\begin{lemma}\label{lem: rank 3 coordinate projection}
	Let $E \subseteq \TP^{d-1}$ be a realizable tropical plane. Let $F_1, F_2, F_3$ be distinct $2$-dimensional faces of $E$, intersecting in a common unbounded $1$-dimensional face $r$.
	Then there is an index set $I \subseteq [d]$ with $|I| = 4$ such that for the coordinate projection $\pi_I : \TP^{d-1} \to \TP^3$ onto these coordinates the following holds: $\pi_I(E) \subseteq \TP^3$ is a tropical plane with wings $\pi_I(F_1), \pi_I(F_2), \pi_I(F_3)$ intersecting in the common unbounded $1$-dimensional $\pi_I(r)$.
\end{lemma}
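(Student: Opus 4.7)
My strategy is to translate this geometric statement into matroid combinatorics and then apply \Cref{lem:matroid-flats}. Since $E$ is realizable, I write $E = \trop(L)$ for a $3$-dimensional subspace $L \subseteq \C^d$ with underlying rank-$3$ matroid $M$ on $[d]$. The unbounded ray $r$ corresponds to a ray of the Bergman fan of $M$, indexed by a proper nonempty flat of $M$ (of rank $1$ or rank $2$), and each $2$-dimensional face $F_j \supseteq r$ is a wing indexed by a maximal flag of proper nonempty flats passing through this distinguished flat. I treat the two cases separately.

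\textbf{Case 1: $r$ comes from a rank-$1$ flat $F$.} Then each $F_j$ corresponds to a chain $F \subsetneq H_j$ for distinct rank-$2$ flats $H_1, H_2, H_3 \supsetneq F$. Applying \Cref{lem:matroid-flats} yields $H_j \not\subseteq H_k \cup H_\ell$ for every permutation of $\{1,2,3\}$, so I may choose four distinct elements $w \in F$, $z \in H_1 \setminus (H_2 \cup H_3)$, $y \in H_2 \setminus (H_1 \cup H_3)$, $x \in H_3 \setminus (H_1 \cup H_2)$, and set $I := \{w, x, y, z\}$. A short rank computation (using that $\{w, z\}$ spans $H_1$ while $y \notin H_1$) gives $\rk_M(I) = 3$, so $\pi_I(L) \subseteq \C^4$ is $3$-dimensional and $\pi_I(E) = \trop(\pi_I(L))$ is a tropical plane in $\TP^3$. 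The underlying matroid of the projection is the restriction $M|_I$, and a direct check shows $F \cap I = \{w\}$ is a rank-$1$ flat of $M|_I$ while $H_1 \cap I = \{w, z\}$, $H_2 \cap I = \{w, y\}$, $H_3 \cap I = \{w, x\}$ are three distinct rank-$2$ flats of $M|_I$ all containing $\{w\}$. These flats index the three wings of $\pi_I(E)$ incident to the ray generated by the indicator of $\{w\}$; matching indicator vectors identifies these wings as $\pi_I(F_1), \pi_I(F_2), \pi_I(F_3)$ with common ray $\pi_I(r)$.

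\textbf{Case 2: $r$ comes from a rank-$2$ flat $F$.} Here each $F_j$ corresponds to a rank-$1$ flat $G_j \subsetneq F$ with $G_1, G_2, G_3$ distinct (hence pairwise disjoint parallel classes). I pick representatives $v_j \in G_j$ and an element $u \in [d] \setminus F$ (which exists since $\rk_M F = 2 < 3 = \rk M$), and set $I := \{v_1, v_2, v_3, u\}$. Since $\{v_1, v_2\}$ spans $F$ and $u \notin F$, we have $\rk_M(I) = 3$, so $\pi_I(E)$ is again a tropical plane in $\TP^3$. An analogous analysis of $M|_I$ (where $F \cap I = \{v_1, v_2, v_3\}$ is a rank-$2$ flat of $M|_I$ containing the three rank-$1$ flats $\{v_j\}$) shows that the three wings through the ray of $F \cap I$ are exactly $\pi_I(F_1), \pi_I(F_2), \pi_I(F_3)$.

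The main obstacle I anticipate is justifying this matroid dictionary when $L$ has nontrivial Puiseux-series coefficients, in which case $E$ may carry bounded cells arising from a nontrivial matroid subdivision not captured by the Bergman-fan combinatorics of $M$ alone. I plan to resolve this by localizing at a point of $r$ sufficiently far along its unbounded direction: there the star of $E$ stabilizes to a genuine Bergman fan (of the matroid recording the ``direction at infinity'' of $r$), and since both the coordinate projection $\pi_I$ and tropicalization commute with taking stars, the local conclusion about $\pi_I(E), \pi_I(r),$ and the $\pi_I(F_j)$ transfers back to the global statement.
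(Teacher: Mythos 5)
Your proposal is essentially the same argument as the paper's: identify the relevant rank-$3$ matroid, interpret $r$ and the $F_j$ via flags of flats, apply \Cref{lem:matroid-flats} to pick one representative element from each rank-$2$ flat, project onto those four coordinates, and invoke realizability (tropicalization of the projected linear space) to certify that the image is a tropical plane. The differences are real but minor, and neither is to your advantage.

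First, Case 2 is vacuous. The paper observes earlier in \Cref{sec:rank-3} that the unbounded rays of a tropical plane dual to a matroid subdivision of $\Delta(d,3)$ are exactly the $d$ rays in coordinate directions $e_1,\dots,e_d$, while edges in directions $\sum_{i\in I}e_i$ with $|I|\geq 2$ are bounded. Since the lemma hypothesizes that $r$ is \emph{unbounded}, its direction is some $e_f$, which forces the common flat of the three flags to be the rank-$1$ flat $\{f\}$ of the local matroid. Your Case 2 (common ray indexed by a rank-$2$ flat) can therefore not occur. The argument you give there is internally sound, but it's solving a subcase that never arises; recognizing this would have shortened the proof.

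Second, and more substantively: you set up the combinatorics with the global matroid $M = M(L)$ of the realization, and only at the end notice that for tropical planes with a nontrivial matroid subdivision the local structure near $r$ is not governed by $\mathrm{Berg}(M)$. The paper resolves this cleanly from the start by localizing at the finite vertex $v$ of $r$ and working with the matroid $M_v$ attached to the maximal cell of the subdivision dual to $v$; all flags of flats describing the wings $F_k$ incident to $r$ are flags of $M_v$, and \Cref{lem:matroid-flats} is applied to $M_v$. Your proposed fix -- taking the star far along $r$ and invoking stabilization -- should also work, but as written it's a sketch, and there is an additional point you'd need to make explicit: the rank computation $\rk(I)=3$ that certifies $\pi_I(L)$ is $3$-dimensional must be carried out in $M(L)$, whereas the flat combinatorics used to choose the four elements lives in $M_v$ (or your stabilized matroid). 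Since every basis of $M_v$ is a basis of $M(L)$, one has $\rk_{M(L)}(I)\geq \rk_{M_v}(I)=3$, so the conclusion is correct, but this bridge between the two matroids is left implicit. The paper avoids the issue entirely by phrasing the flag argument in terms of $M_v$ and then switching to the realizability argument for the dimension count.
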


\begin{proof}
		Let $E \subseteq \TP^{d-1}$ be a realizable tropical plane. Then $E$ is a subcomplex of a polyhedral complex that is dual to a matroid subdivision of $\Delta(d,3)$, where each maximal matroid polytope corresponds to a matroid of rank $3$. 
		Let $v$ be the vertex of the ray $r = F_1 \cap F_3 \cap F_3$, and $P$ be the matroid polytope dual to $v$. Let $M$ be the corresponding matroid of rank $3$ on ground set $[n]$. 
		Each $2$-dimensional face $F_k$ spans the normal cone of a face of $P$. The $1$-dimensional faces of $F_k$ that are incident to $v$ have slopes $\sum_{i \in H^k_1} e_{i}$ and $\sum_{i \in H^k_2} e_{i}$ respectively. Here, for each $k = 1,2,3$, we have that $\emptyset \subset H^k_1 \subset H^k_2 \subset E$ is a chain of flats of $M$ \cite[Theorem 4.2.6]{maclagan15_introductiontropicalgeometry}. Thus, $H^k_1$ is a flat of rank $1$, and $H^k_2$ is a flat of rank $2$.
		By assumption, $F_1, F_2, F_3$ intersect in an unbounded $1$-dimensional face $r$.
		 Hence, there exists an element $f \in [n]$ such that $H^1_1 = H^2_1 = H^3_1 = \{f\}$. By assumption,  $H^1_2, H^2_2 , H^3_3$ are distinct, and so by \Cref{lem:matroid-flats} we can choose distinct $h_1 \in H^1_{2} \setminus (H^2_{2} \cup H^3_{2}),\ h_2 \in H^2_{2} \setminus (H^1_{2} \cup H^3_{2})$ and $h_3 \in H^3_{2} \setminus (H^1_{2} \cup H^2_{2})$. 
		 Let $I = \{h_1, h_2, h_3, f\}$. Then $\pi_I(r) \subseteq \TP^3$ is the ray spanned by $e_f$, and $\pi_I(F_k) = \cone(e_f, e_{h_k})$. 
                 
		Finally, we show that $\pi_I(E)$ is tropical plane.
		Since $E$ is realizable, $E$ is the tropicalization of a $2$-dimensional plane $\mathcal E$ in $\C\mathbb P^{d-1}$. We can first apply the coordinate projection to obtain a linear space $\pi_I(\mathcal E) \subseteq \C\mathbb P^{3}$ of dimension at most $2$. Note that $\pi_I(E)$ is the tropicalization of $\pi_I(\mathcal E)$, and is thus a tropical linear space of dimension at most $2$. But since $\pi_I(F_k) \subseteq \pi_I (E)$ and $\pi_I(F_k)$ is a $2$-dimensional cone, $ \pi_I (E)$ has dimension $2$ and is a tropical plane.
\end{proof}

If the assumptions of \Cref{lem: rank 3 coordinate projection} hold, i.e. if there is a tropical plane $E$ with $3$ marked $2$-faces $F_1, F_2, F_3$ intersecting in an unbounded ray, then we say that $E$ contains the \emph{starship}\footnote{more precisely, a starship of type ``Lambda-class T-4a shuttle''} formed by the marked faces $F_1, F_2, F_3$. Such a configuration can be seen in \Cref{fig:starship}.

\begin{figure}
	\centering
	\includegraphics[scale=0.9]{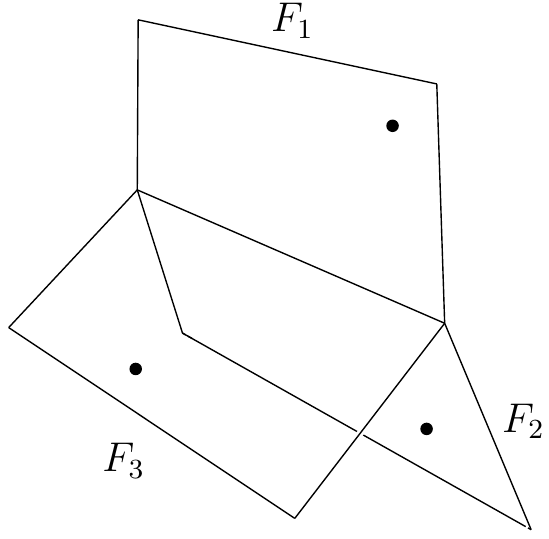}
	\caption{A configuration of three marked $2$-faces forming a starship, as in the assumptions of \Cref{lem: rank 3 coordinate projection}.}
	\label{fig:starship}
\end{figure}

\begin{theorem}[Starship criterion]\label{th:starship-critereon}
	Let $A\in \tropdet[3]$ be a matrix in the interior of a cone $C\subseteq \tropdet[3]$ and let $E$ be a tropical plane containing the points given by the columns of $A$.
	If $E$ has $3$ marked $2$-faces that intersect in an unbounded $1$-dimensional face, then $C$ is not positive.
\end{theorem}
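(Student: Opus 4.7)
The plan is to exhibit a $(4\times 4)$-submatrix $A_{IJ}$ of $A$ that fails to lie in $\tropdetplus[3][4,4]$. Since a positive lift of $A$ of rank at most $3$ restricts to a positive lift of any $(4\times 4)$-submatrix of rank at most $3$, positivity is inherited by submatrices; hence producing such an $A_{IJ}$ forces $A \notin \tropdetplus[3]$, so the cone $C$ cannot be positive.

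To choose the row indices, I would apply \Cref{lem: rank 3 coordinate projection} to the faces $F_1,F_2,F_3$ and their common ray $r$, obtaining $I \subseteq [d]$ with $|I| = 4$ such that $\pi_I(E) \subseteq \TP^3$ is a tropical plane (necessarily the standard tropical hyperplane up to translation), and $\pi_I(F_1),\pi_I(F_2),\pi_I(F_3)$ are wings of $\pi_I(E)$ sharing the ray $\pi_I(r)$. Relabeling the four coordinates in $I$ so that $\pi_I(r)$ points in the direction of the fourth basis vector, these three wings become $W_{23}, W_{13}, W_{12}$; equivalently, they are indexed by the three edges $\tau_1 = \{2,3\}$, $\tau_2 = \{1,3\}$, $\tau_3 = \{1,2\}$ of the triangle on vertices $v_1,v_2,v_3$ in $K_4$. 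For each $k \in \{1,2,3\}$ fix a column index $j_k$ with $A_{j_k} \in \interior(F_k)$, pick any further column index $j_4 \in [n]$, and set $J = \{j_1,j_2,j_3,j_4\}$. The submatrix $A_{IJ}$ has Kapranov rank at most $3$ and therefore lies in $\tropdet[3][4,4] = \trop(V(\det))$.

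Suppose for contradiction that $A_{IJ}$ is positive. By \Cref{rem:maximal-cones-positive-corank-1}, $A_{IJ}$ lies in a positive maximal cone $C' \subseteq \tropdet[3][4,4]$, dual to an edge $\conv(\sigma,\pi)$ of the Birkhoff polytope $B_4$. By \Cref{lem:wlog-matrix-nonneg-mod-linspace}, after subtracting an appropriate lineality vector I may assume $A_{IJ}$ is nonnegative with columns on the standard tropical hyperplane $H_0$ and $A_{IJ}[i,j]=0$ whenever $i \in \{\sigma^{-1}(j),\pi^{-1}(j)\}$. This translation identifies $\pi_I(E)$ with $H_0$, so for each $k$ the column $\pi_I(A_{j_k})$ still lies in the interior of its prescribed wing; in particular, its zero positions are exactly the two elements of $\tau_k$. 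Combining with the zero pattern forced by $C'$ gives $\{\sigma^{-1}(j_k),\pi^{-1}(j_k)\} \subseteq \tau_k$, so the mark contributed by column $j_k$ to the cartoon of $C'$ lies either on the edge $\tau_k$ of $K_4$ (when the two preimages are distinct) or on one of the two vertices incident to $\tau_k$ (when they coincide). Because $\sigma$ and $\pi$ are bijections of $[4]$, no two of these three marks can coincide in location, so we obtain three distinct marks covering the three edges of the triangle on $v_1,v_2,v_3$, yielding a marked triangle in the cartoon of $C'$. This contradicts \Cref{prop: triangle criterion} and hence the positivity of $C'$.

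The main technical step is this last translation of the geometric starship assumption into a marked-triangle certificate. The subtlety worth flagging is the degenerate case $\sigma^{-1}(j_k) = \pi^{-1}(j_k)$, in which column $j_k$ contributes only a vertex mark; by the definition of a marked triangle, however, a mark on an incident vertex covers its edge just as well as an interior mark, so the configuration still certifies non-positivity.
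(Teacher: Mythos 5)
Your proposal is correct and follows essentially the same route as the paper's proof: project to a $(4\times4)$-submatrix via \Cref{lem: rank 3 coordinate projection}, recognize the three marked wings meeting in a ray as a marked triangle in a cartoon, invoke the triangle criterion to show the submatrix cannot be positive, and conclude by the heredity of positivity under taking minors. The difference is purely in the level of packaging: where the paper invokes the \hyperref[th:geometric-triangle-criterion]{Geometric triangle criterion} (\Cref{th:geometric-triangle-criterion}) as a black box, you unroll it — first using \Cref{rem:maximal-cones-positive-corank-1} to place $A_{IJ}$ in a positive \emph{maximal} cone $C'$, then normalizing via \Cref{lem:wlog-matrix-nonneg-mod-linspace} to compare the zero pattern forced by $C'$ with the wing containment, and finally reading off a marked triangle in the cartoon of $C'$ and contradicting \Cref{prop: triangle criterion} directly. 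This is slightly more careful than the paper's text (which does not explicitly verify that the submatrix sits in a maximal cone, nor address the position of the fourth column), and your handling of the degenerate vertex-mark case $\sigma^{-1}(j_k)=\pi^{-1}(j_k)$ and the distinctness argument via bijectivity of $\sigma,\pi$ are the right details to supply. The one place you assert without argument is that the normalization from \Cref{lem:wlog-matrix-nonneg-mod-linspace} identifies $\pi_I(E)$ with $H_0$; this is implicit in the paper's own proof of \Cref{lem:cartoon-geometric-interpretation} at exactly the same level of rigor, so it does not constitute a departure from the paper, but if you wanted a self-contained argument you would instead apply \Cref{lem:cartoon-geometric-interpretation} directly with $H=\pi_I(E)$ and observe that a column in the interior of $W_{\tau_k}$ cannot simultaneously lie on a different wing, which forces $\{\sigma^{-1}(j_k),\pi^{-1}(j_k)\}\subseteq\tau_k$.
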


\begin{proof}
  	Let $A_1, A_2, A_3$ be the points lying on $2$-faces $F_1, F_2, F_3$ respectively, and let $j \in [n]\setminus\{1, 2, 3\}$. 
  	By \Cref{lem: rank 3 coordinate projection} there exists a coordinate projection onto coordinates $I = \{i_1, \dots, i_4\}$ such that $\pi_I(F_1),\pi_I(F_2),\pi_I(F_3)$ are $2$-dimensional faces of 
  	the tropical plane $\pi_I(E) \subseteq \TP^3$, which intersect in a common unbounded ray in direction $e_{i_4}$. Note that the projection of the point $\pi_I(A_{k})$ marks the $2$-face $\pi_I(F_k)$ for $k= 1,2,3$, and $\pi_I(A_4) \in \pi_I(E)$.
  	This point configuration of $4$ points in $\TP^3$ is also represented by the $(4 \times 4)$-submatrix of $A$ with rows $I = \{i_1, i_2, i_3, i_4\}$ and columns $J = \{1, 2, 3, j\}$. 
  	Dually, this corresponds to $3$ marked edges of the simplex $\Delta_3$ incident to the triangle that is dual to the ray $e_{i_4}.$
  	By the \hyperref[th:geometric-triangle-criterion]{Geometric triangle criterion} (\Cref{th:geometric-triangle-criterion}) this $(4 \times 4)$-matrix is not positive. Thus, for the minor 
  	\[
  		f^{IJ}= \sum_{\sigma \in S_4} \sgn{\sigma} \prod_{k = 1}^4 x_{i_k \sigma(j_k)}
  	\]
  	holds that $A \not \in \tropcplus(V(f^{IJ}))$. It follows that $A$ is not contained in the positive tropical determinantal prevariety (as defined in \eqref{eq:positive-tropical-determinantal-prevariety} in \Cref{sec:determantal-varieties})
  	\[
  		A \not \in \bigcap_{\substack{f^{IJ} \text{ is a } \\ (4 \times 4)\text{-minor}}} \tropcplus(V(f^{IJ})) = \prevarplus[3]  
  	\]
  	and in particular 
  	\[
  		A \not \in  \bigcap_{f \in I_r} \tropcplus(V(f)) =  \tropdetplus[3] .
  	\]
\end{proof}

We give an example of a matrix $A \in \tropdet[3]$, in which the point configuration in $\TP^{d-1}$ does not contain a starship, but an appropriate coordinate projection does.

\begin{example}[{The converse of the \hyperref[lem: rank 3 coordinate projection]{Starship criterion} does not hold}]\label{ex:starship-criterion}
	Consider the matrix 
	\[
	A = 	\ma
	k & k & 0 & 0 & 0 \\
	0 & k & k & 0 & 1 \\
	0 & 0 & k & k & 0 \\
	0 & 0 & 0 & k & k \\
	k & 0 & 0 & 0 & k
	\trix
	\]
	for any $k  > 1$. This is a point configuration where 
	\[
	A_1 \in W_1 = \cone(e_1, e_5), \ A_2 \in W_2 = \cone(e_1, e_2), \ A_3 \in W_3 = \cone(e_2,e_3),
	\]
	\[
	 A_4 \in W_4 =  \cone(e_3, e_4), \ A_5 \in W_5 = e_2 + \cone(e_4, e_5),
	\]
	which are $2$-dimensional wings of a tropical plane $E \subseteq \TP^{4}$. Hence, this point configuration does not satisfy the assumptions of the \hyperref[th:starship-critereon]{Starship criterion} (\Cref{th:starship-critereon}) -- it does not contain a starship.
	We project the marked wings $W_1, W_2, W_3, W_5$ onto the first $4$ coordinates. Then 
	$
		 \pi(W_1) = \cone(e_1), \ \pi(W_2) = \cone(e_1, e_2), \ \pi(W_3) = \cone(e_2,e_3),  \pi(W_5) = e_2 + \cone(e_4).
	$
	The projections $\pi(W_1), \pi(W_2), \pi(W_3)$ are wings of the (unique) tropical plane $E'$ in $\TP^3$ with apex at the origin, and the projection $\pi(W_5)$ is a ray in the wing $\cone(e_2, e_3)$ of $E'$. Thus, the submatrix
	\[
	\begin{blockarray}{ccccc}
		& A_1 & A_2 & A_3 & A_5  \\
		\begin{block}{c(cccc)}
			1 & k & k & 0 & 0 \\ 
			2 & 0 & k & k & 1 \\ 
			3 & 0 & 0 & k & 0 \\
			4 & 0 & 0 & 0 & k \\
		\end{block}
	\end{blockarray}
	\]
	constitutes a starship (with unbounded ray in direction $e_2$) with respect to $E'$.
	If $i_1 = 1, i_2 =2, i_3 = 3, i_4 = 5$ and
	\[
		f^{IJ}= \sum_{\sigma \in S_4} \sgn{\sigma} \prod_{k = 1}^4 x_{i_k \sigma(k)}
	\]
	then the
	 \hyperref[th:geometric-triangle-criterion]{Geometric triangle criterion} (\Cref{th:geometric-triangle-criterion})
	implies that we have that $A \not \in \tropcplus(V(f^{IJ}))$ and hence $A \not \in \tropdetplus[3]$. If $C$ is a cone of $\tropdet[3]$ containing $A$ in its relative interior, then this implies that $C$ is not positive. Hence, the converse of \Cref{th:starship-critereon} does not hold.
	We continue with this in \Cref{ex:bicolored-tree-arrangements}.
	\end{example}

\subsection{Bicolored tree arrangements}\label{sec:bicolored-tree-arrangements}

Tree arrangements were introduced in \cite{herrmann_howdrawtropical} for studying the Dressian $\Dr{d,3}$. 
It was shown that tree arrangements encode matroid subdivisions of the hypersimplex $\Delta(d,3)$ by looking at the induced subdivision on the boundary. 
In particular, this implies that we can associate a tree arrangement to every tropical plane.
In this section, we extend this idea and introduce \emph{bicolored tree arrangements}, which correspond to a tropical plane with a configuration of points on it.
However, we will see in \Cref{ex:bicolored-tree-arrangements} that this is not a one-to-one correspondence.\\

We first describe the established bijection between tropical planes and (uncolored) tree arrangements, following \cite{herrmann_howdrawtropical}.
As introduced at the beginning of this section, a tropical plane is a subcomplex of the polyhedral complex that is dual to a regular matroid subdivision of the hypersimplex $\Delta(d,3)$.
Inside the affine space $\set{x\in\RR^d \mid \sum_{i=1}^d x_i = 3}$, the facets of $\Delta(d,3)$ are given by $x_i = 0$ and $x_i = 1$.
A tropical plane is the polyhedral complex dual to the subcomplex of a matroid subdivision of $\Delta(d,3)$ consisting of the faces which are not contained in $\set{x_i = 0}$.
Every matroid subdivision of $\Delta(d,3)$ is uniquely determined by the restriction of the subdivision to the $n$ facets of $\Delta(d,3)$ defined by $\set{x_i = 1}$ \cite[Section 4]{herrmann_howdrawtropical}.
We restrict this matroid subdivision to the remaining facets given by $\set{x_i = 1}$.
These facets are isomorphic to a hypersimplex $\Delta(d,2)$, so the restricted subdivisions are dual to a tropical line in $\TP^{d-2}$.
This tropical line has rays in directions $e_1, \dots, e_{i-1}, e_{i+1}, \dots, e_d$.
As these tropical lines are in bijection with (uncolored) phylogenetic trees, this yields a \emph{tree arrangement.}
We extend this idea as follows.

\begin{const}[Bicolored tree arrangements]
Let $A \in \tropdet[3]$ be the matrix giving $n$ points $A_1, \dots, A_n$ on a tropical plane $E$ in $\TP^{d-1}$.
Let $\Sigma$ be the corresponding subdivision of $\Delta(d,3)$.
If $A$ is generic with respect to $E$, then every point $ A_j, j\in [n]$ lies in the interior of a $2$-face $F$, where each $1$-dimensional face of $F$ has slope  $\sum_{i \in I} e_{i}$.
When restricting to a facet $\set{x_{i'} = 1}$ of $\Delta(d,3)$, then the subdivision of this facet is dual to the collection of faces of $E$ that contains the unbounded ray in direction $ e_{i'}$.
We denote the collection of these unbounded faces by $\mathcal F_{i'}$.

Let $J_{i'} \subseteq [n]$ be the set of points lying on a face in $\mathcal F_{i'}$.
To obtain a bicolored tree arrangement, 
project all of these $2$-dimensional faces of $E$ and the points in $J_{i'}$ onto the coordinates $1,\dots,i'-1,i'+1,\dots,d$.
The projection of the $2$-faces form tropical line $L_{i'}$, and the projection of the points are points on $L_{i'}$.
Hence, applying \Cref{construction:bicolored-phylogenetic-trees} induces a bicolored phylogenetic tree $P_{i'}$.
We call the collection of these bicolored trees $P_1, \dots, P_d$ a \emph{bicolored tree arrangement}.
\end{const}

	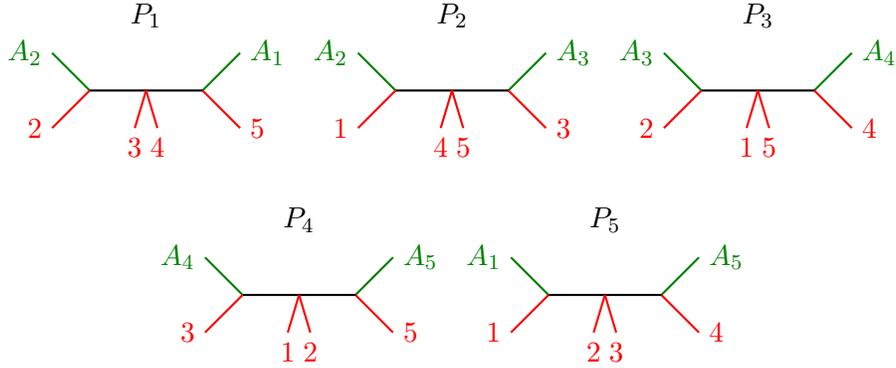
\begin{figure} [t]
	\centering
	\begin{tikzpicture}[scale=0.5]
		\node[anchor=east] at (-1,1) {\color{DarkGreen}$A_2$};
		\node[anchor=east] at (-1,-1) {\color{red} $2$};
		\node[anchor=north] at (1.2,-1) {\color{red}$3$};
		\node[anchor=north] at (1.8,-1) {\color{red}$4$};
		\draw[thick, color=DarkGreen] (3,0) -- (4,1);
		\draw[thick, color=red] (3,0) -- (4,-1);
		\node[anchor=west] at (4,-1) {\color{red}$5$};
		\node[anchor=west] at (4,1) {\color{DarkGreen}$A_1$};
		\draw[thick] (0,0) -- (3,0);
		\draw[thick, color=DarkGreen] (0,0) -- (-1,1);
		\draw[thick, color=red] (0,0) -- (-1,-1);
		\draw[thick, color=red] (1.5,0) -- (1.2,-1);
		\draw[thick, color=red] (1.5,0) -- (1.8,-1);
		\node at (1.5,2) {$P_1$};
	\end{tikzpicture} 
	\begin{tikzpicture}[scale=0.5]
		\node[anchor=east] at (-1,1) {\color{DarkGreen}$A_2$};
		\node[anchor=east] at (-1,-1) {\color{red} $1$};
		\node[anchor=north] at (1.2,-1) {\color{red}$4$};
		\node[anchor=north] at (1.8,-1) {\color{red}$5$};
		\draw[thick, color=DarkGreen] (3,0) -- (4,1);
		\draw[thick, color=red] (3,0) -- (4,-1);
		\node[anchor=west] at (4,-1) {\color{red}$3$};
		\node[anchor=west] at (4,1) {\color{DarkGreen}$A_3$};
		\draw[thick] (0,0) -- (3,0);
		\draw[thick, color=DarkGreen] (0,0) -- (-1,1);
		\draw[thick, color=red] (0,0) -- (-1,-1);
		\draw[thick, color=red] (1.5,0) -- (1.2,-1);
		\draw[thick, color=red] (1.5,0) -- (1.8,-1);
		\node at (1.5,2) {$P_2$};
	\end{tikzpicture} 
	\begin{tikzpicture}[scale=0.5]
		\node[anchor=east] at (-1,1) {\color{DarkGreen}$A_3$};
		\node[anchor=east] at (-1,-1) {\color{red} $2$};
		\node[anchor=north] at (1.2,-1) {\color{red}$1$};
		\node[anchor=north] at (1.8,-1) {\color{red}$5$};
		\draw[thick, color=DarkGreen] (3,0) -- (4,1);
		\draw[thick, color=red] (3,0) -- (4,-1);
		\node[anchor=west] at (4,-1) {\color{red}$4$};
		\node[anchor=west] at (4,1) {\color{DarkGreen}$A_4$};
		\draw[thick] (0,0) -- (3,0);
		\draw[thick, color=DarkGreen] (0,0) -- (-1,1);
		\draw[thick, color=red] (0,0) -- (-1,-1);
		\draw[thick, color=red] (1.5,0) -- (1.2,-1);
		\draw[thick, color=red] (1.5,0) -- (1.8,-1);
		\node at (1.5,2) {$P_3$};
	\end{tikzpicture} \\
\vspace*{1em}
	\begin{tikzpicture}[scale=0.5]
		\node[anchor=east] at (-1,1) {\color{DarkGreen}$A_4$};
		\node[anchor=east] at (-1,-1) {\color{red} $3$};
		\node[anchor=north] at (1.2,-1) {\color{red}$1$};
		\node[anchor=north] at (1.8,-1) {\color{red}$2$};
		\draw[thick, color=DarkGreen] (3,0) -- (4,1);
		\draw[thick, color=red] (3,0) -- (4,-1);
		\node[anchor=west] at (4,-1) {\color{red}$5$};
		\node[anchor=west] at (4,1) {\color{DarkGreen}$A_5$};
		\draw[thick] (0,0) -- (3,0);
		\draw[thick, color=DarkGreen] (0,0) -- (-1,1);
		\draw[thick, color=red] (0,0) -- (-1,-1);
		\draw[thick, color=red] (1.5,0) -- (1.2,-1);
		\draw[thick, color=red] (1.5,0) -- (1.8,-1);
		\node at (1.5,2) {$P_4$};
	\end{tikzpicture} 
	\begin{tikzpicture}[scale=0.5]
		\node[anchor=east] at (-1,1) {\color{DarkGreen}$A_1$};
		\node[anchor=east] at (-1,-1) {\color{red} $1$};
		\node[anchor=north] at (1.2,-1) {\color{red}$2$};
		\node[anchor=north] at (1.8,-1) {\color{red}$3$};
		\draw[thick, color=DarkGreen] (3,0) -- (4,1);
		\draw[thick, color=red] (3,0) -- (4,-1);
		\node[anchor=west] at (4,-1) {\color{red}$4$};
		\node[anchor=west] at (4,1) {\color{DarkGreen}$A_5$};
		\draw[thick] (0,0) -- (3,0);
		\draw[thick, color=DarkGreen] (0,0) -- (-1,1);
		\draw[thick, color=red] (0,0) -- (-1,-1);
		\draw[thick, color=red] (1.5,0) -- (1.2,-1);
		\draw[thick, color=red] (1.5,0) -- (1.8,-1);
		\node at (1.5,2) {$P_5$};
	\end{tikzpicture} 
	\caption{The bicolored tree arrangement of the non-positive matrix in \Cref{ex:starship-criterion}.}
	\label{fig:example-bicolored-tree-arrangement}
\end{figure}

\begin{theorem}\label{th:bicolored-tree-arrangements}
	Let $A \in \tropdet[3][d,n]$ be generic with respect to the tropical plane $E$.
	If $A$ is positive, then every tree in the induced bicolored tree arrangement is a caterpillar tree.
\end{theorem}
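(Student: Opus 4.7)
My plan is to prove the contrapositive: if some tree $P_{i'}$ in the bicolored tree arrangement fails to be a caterpillar, then I will produce a starship in $E$ and invoke the Starship Criterion (\Cref{th:starship-critereon}) to conclude $A \not\in \tropdetplus[3]$. The bridge from the rank-$3$ geometry of $(A,E)$ to the rank-$2$ result \Cref{cor:positive-trees-rk-2} will be the submatrix $A^{(i')}$ obtained from $A$ by dropping row $i'$ and restricting to the columns in $J_{i'}$. Since each of these columns comes from a point lying on a $2$-face of $E$ containing the ray $e_{i'}$, the projections to $\TP^{d-2}$ lie on the tropical line $L_{i'}$, so $A^{(i')} \in \tropdet[2]$, and applying \Cref{construction:bicolored-phylogenetic-trees} to $A^{(i')}$ reproduces exactly $P_{i'}$.

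The first step is to turn the combinatorial failure into a geometric tripod on $L_{i'}$. Since $P_{i'}$ is not a caterpillar, \Cref{cor:positive-trees-rk-2} gives $A^{(i')} \not\in \tropdetplus[2]$, and then \Cref{th:positive-rank-2-barvinok} together with \Cref{prop:barvinok-rank} tells us that the columns of $A^{(i')}$ cannot all lie on a single tropical segment of $L_{i'}$. Their tropical convex hull is therefore a subtree of $L_{i'}$ with some branching vertex $v$, and I will pick three columns $A_{j_1}, A_{j_2}, A_{j_3}$ whose projections lie on three distinct $1$-dimensional edges of $L_{i'}$ incident to $v$.

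Next I will lift this tripod back to $E$. Each $1$-dimensional edge of $L_{i'}$ is the image, under the coordinate projection that collapses the ray $e_{i'}$, of a unique $2$-face in $\mathcal F_{i'}$, so the three distinct edges at $v$ correspond to three distinct faces $F_1, F_2, F_3 \in \mathcal F_{i'}$; the genericity of $A$ ensures that $A_{j_k}$ lies in the interior of $F_k$, so each $F_k$ is a marked face. All three $F_k$ contain the unbounded ray in direction $e_{i'}$, so $F_1 \cap F_2 \cap F_3$ is a face of $E$ containing this ray: it has dimension at most $1$ because the $F_k$ are distinct maximal $2$-faces, and at least $1$ because it contains the $1$-dimensional ray. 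Thus $E$ contains a starship, and \Cref{th:starship-critereon} will then deliver $A \not\in \tropdetplus[3]$.

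The step I expect to require the most care is this lifting: verifying that a branching vertex of the convex hull in $L_{i'}$ really produces three \emph{distinct} marked $2$-faces of $E$ whose common intersection is a single unbounded $1$-face. This is where both the genericity assumption (to keep marked points in the interior of $2$-faces, and hence to identify $2$-faces from their projections) and the matroidal description of tropical planes recalled at the start of \Cref{sec:bicolored-tree-arrangements} must be combined carefully.
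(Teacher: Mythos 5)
Your proposed strategy — reduce to the Starship Criterion by exhibiting three distinct marked $2$-faces of the \emph{original} plane $E$ sharing the unbounded ray over the branching vertex $v$ — contains a genuine gap, precisely at the step you flagged. You assert that you can ``pick three columns $A_{j_1},A_{j_2},A_{j_3}$ whose projections lie on three distinct $1$-dimensional edges of $L_{i'}$ incident to $v$,'' but there is no reason such columns need exist. A branching vertex $v$ of the tropical convex hull only guarantees that each branch \emph{eventually} contains a column projection; the first column on a branch may sit past one or more further vertices of $L_{i'}$, in which case that column lies in a $2$-face of $E$ that does \emph{not} contain the ray $\pi_{i'}^{-1}(v)$. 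Concretely, take $L_{i'}$ to be a caterpillar line with two vertices $v_0,v_1$, bounded edge $v_0v_1$, rays $e_1,e_2$ at $v_0$ and $e_3,e_4$ at $v_1$, and suppose the columns project onto the four rays but not the bounded edge. Then $P_{i'}$ is not a caterpillar ($v_0$ has three internal edges), yet at $v_0$ only the two $2$-faces over $e_1,e_2$ are marked and the $2$-face over the bounded edge is not — so no starship exists in $E$, and \Cref{th:starship-critereon} simply does not apply. This is not an artifact: \Cref{ex:starship-criterion,ex:bicolored-tree-arrangements} exhibit non-positive $A$ with no starship in $E$, which is exactly why the paper labels the starship criterion only a sufficient condition.

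The paper's proof avoids this by going one dimension lower before coming back up: it selects three columns spanning the branch, forms the $((d-1)\times3)$ submatrix, invokes \Cref{cor:positive-trees-rk-2} to get a non-positive \emph{rank-$2$} matrix, and then — crucially — extracts a non-positive $(3\times3)$ submatrix, which amounts to a further coordinate projection to $\TP^2$ contracting the intermediate edges of $L_{i'}$ so that the three columns land on three distinct rays at a single apex. Only after this extra projection does the paper re-adjoin row $i'$ and a fourth column to produce a $(4\times4)$ submatrix in $\TP^3$ where the starship structure actually appears, and then applies \Cref{th:geometric-triangle-criterion} directly. Your argument is missing this intermediate projection; without it, the claim that the tripod lifts to marked $2$-faces of $E$ fails, and you cannot invoke \Cref{th:starship-critereon} as a black box.
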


\begin{proof}
  Let $P$ be a tree in the bicolored tree arrangement that is not a caterpillar tree.
  We show that $A$ is not positive.
  After relabeling we can assume that $P= P_d$, i.e. $P$ is the tree on the $d$-th facet.
  Since $P$ is not a caterpillar tree, it has an internal vertex that is incident to at least $3$ internal edges.
  Thus,  $P$ corresponds to a tropically collinear point configuration, on which there are points with labels $1, 2, 3$ on a tropical line $L\in \TP^{d-2}$ whose tropical convex hull in $\TP^{d-2}$ contains the $3$ internal edges.
  Consider the $((d-1) \times 3)$-matrix $\overline{A}^{\{1, 2, 3\}}$ consisting of the respective columns $A_{1}, A_{2}, A_{3}$. This matrix $\overline{A}^{\{1, 2, 3\}}$ has Kapranov rank $2$.
  Then by \Cref{cor:positive-trees-rk-2}, it has no positive lift of rank $2$.
  Thus, $\overline{A}^{\{1, 2, 3\}}$ has a $(3 \times 3)$-submatrix $B$ with row indices $i_1, i_2, i_3$, such that (possibly after relabeling ) the column $B_{k}$ lies on the ray of a tropical line in $\TP^2$ with slope $e_k $ for $ k = 1,2,3$. 

  Pick any additional column $j$, and consider the $(4 \times 4)$-submatrix $D$ with row indices $i_1, i_2, i_3, d$ and column indices $1, 2, 3, j$.
  Then the points given by the columns $D_{k}$, $k\in [3]$, and $D_j$ lie on a common tropical plane $E\subseteq \TP^3$. By genericity of $A$ w.r.t. $E$, the points $D_{k}$, $k \in [3]$ lie in the interior of the faces of $E$ that are (up to translation) the cones spanned by the rays $e_{k}$ and $ e_d$, respectively. 
  Dually, this corresponds to $3$ marked edges of the simplex $\Delta_3$ incident to the triangle that is dual to the ray $e_{d}.$
  By the \hyperref[th:geometric-triangle-criterion]{Geometric triangle criterion} (\Cref{th:geometric-triangle-criterion}) this $(4 \times 4)$-matrix is not positive.
  As in the proof of \Cref{th:starship-critereon}, this implies that $A$ is not positive.
\end{proof}

\begin{example}\label{ex:bicolored-tree-arrangements}
	Consider the matrix from \Cref{ex:starship-criterion}.
	This matrix is not positive.
	However, the bicolored trees in this arrangement are all caterpillar trees, as depicted in \Cref{fig:example-bicolored-tree-arrangement}. Thus, the converse of \Cref{th:bicolored-tree-arrangements} does not hold.
\end{example}

\begin{remark}
	The Starship criterion can be obtained as a corollary of \Cref{th:bicolored-tree-arrangements} in the special case that $A$ is generic w.r.t to the tropical plane $E$. Indeed, if $A$ is positive, then every tree in the bicolored tree arrangement is a caterpillar tree. However, a starship with unbounded ray in direction $e_{i'}$ yields a tree $P_{i'}$ that is not a caterpillar tree. Thus, $A$ is not positive.
\end{remark}

In both statements of \Cref{th:starship-critereon} and \Cref{th:bicolored-tree-arrangements}, the converse fails to be true. A main problem lies in that both the tree arrangement and the Starship criterion are only able to capture the geometry of the unbounded faces of the tropical plane. While this information is enough to reconstruct the entire plane \cite{herrmann_howdrawtropical}, this does not suffice to capture information about the point configuration on bounded faces of the tropical plane.

\bibliographystyle{alpha}
\bibliography{references}

\vspace*{\fill}
\subsection*{Affiliations}
\vspace{0.2cm}

\noindent \textsc{Marie-Charlotte Brandenburg} \\
\textsc{ Max Planck Institute for Mathematics in the Sciences \\
	Inselstra{\ss}e 22, 04103 Leipzig, Germany} \\
\url{marie.brandenburg@mis.mpg.de} \\

\noindent \textsc{Georg Loho} \\
\textsc{  University of Twente \\
	Drienerlolaan 5,
	7522 NB Enschede,
	Netherlands} \\
\url{g.loho@utwente.nl} \\

\noindent \textsc{Rainer Sinn} \\
\textsc{ Universität Leipzig \\
	Augustusplatz 10,
	04109 Leipzig, Germany} \\
\url{rainer.sinn@uni-leipzig.de }

\appendix

\section{Appendix: Lineality spaces and fan structures}\label{excursion:lineality-spaces}

\subsection{Lineality spaces}

We devote this section to describe the lineality spaces of the fans $\tropdet, \Phspace$ and $\Gr$, as well as their geometric interpretations. Understanding the lineality spaces is a crucial part of the arguments made in the proofs of  \Cref{lem:representation-split-trees-bicoloing,lem:bijection-lineality-space}, which form the foundation of \Cref{th:pluecker-bijection}.
The lineality space of $\tropdet$ is spanned by the vectors
\begin{equation}\label{eq:lineality space-full}
	\sum_{i=1}^d E_{ij} \text{ for } j\in[n] \text{ and }	\sum_{j=1}^n E_{ij} \text{ for } i\in[d],
\end{equation}
where $E_{ij}$ denotes the standard basis matrix in $\RR^{d\times n}$. Let $A \in \tropdet$ and consider the columns as a point configuration in $\TP^{d-1}$. Let $H$ be an $(r-1)$-dimensional tropical linear space through the columns of $A$. We now describe in which sense the combinatorics of the point configuration stays invariant modulo lineality space.
We write $\mathbf 1$ for the vector $(1,1,\ldots,1)^t\in \T^d$.
First, fix $j \in [n]$. Then
\[
A' := A + \sum_{i=1}^d E_{ij} = (A_1, \dots, A_j + \mathbf 1, \dots, A_n)
\]
so $A'$ is a matrix where for all columns holds $A'_k = A_k$, if $k \neq j$. However, as a point in $\TP^{d-1}$ we have $A_j \cong A_j + \mathbf 1 = A'$. Therefore, as a point configuration inside $\TP^{d-1}$, we consider the  point configurations given by the matrices $A$ and $A'$ to be the same. 
Second, fix  $i \in [d]$. Then
\[
A'' := A + \sum_{j=1}^n E_{ij} =  (A_1+e_i,  \dots, A_n+e_i)
\]
so $A''$ is a matrix where for all columns holds $A''_k = A_k + e_i$. Thus, the point configuration given by $A''$ is a translation by $e_i$ of the point configuration defined by $A$. The points of $A''$ lie on the translated tropical linear space $H + e_i$. Hence, the points in $\tropdet$ modulo lineality space correspond to point configurations in $\TP^{d-1}$ modulo translation. 

The lineality space of the space $\Phspace$ of bicolored phylogenetic trees coincides with the lineality space of $\tropdet[2]$. By construction, two collinear point configurations that are equal up translation induce the same bicolored phylogenetic trees. 

The lineality space of the tropical Grassmannian $\Gr$ is spanned by the vectors
\[
\sum_{\substack{ i \in [d+n] \\ i \neq k }} \tilde{e}_{ik}, \ k \in [d+n]
\]
where the vectors $\tilde{e}_{ij} = \tilde{e}_{ji} \in \TP^{\binom{d+n}{2}-1}$ are the standard basis vectors.
Let $p \in \Gr$ and fix $k \in [d+n]$. Then
\begin{equation}\label{eq:lineality-space-grassmannian}
	p' := (p + 	\sum_{\substack{ i \in [d+n] \\ i \neq k }} \tilde{e}_{ik} )_{st} = \begin{cases}
		p_{st} & \text{if } s,t \neq k \\
		p_{st} + 1 & \text{if } s = k \text{ or } t = k.
	\end{cases}
\end{equation}
Recall that $- p_{ij}$ is the  length of the path between leaves $i$ and $j$ of an (uncolored) phylogenetic tree $P$. That is, the tree $P$ has internal edges of certain lengths, and each leaf $i$ has a length $l_i$. 
The vector $p'$ is the tree metric of the tree $P'$ of the same combinatorial type, where all lengths of internal edges coincide with the lengths of internal edges of $P$. Furthermore, the leaf $i$ of $P'$ has length $l_i'$, and \eqref{eq:lineality-space-grassmannian} implies that $l_i = l'_i$ for $i \neq k$ and $l_k = l'_k -1$.
Therefore, the points in $\Gr$ modulo lineality space correspond to metric phylogenetic trees modulo leaf lengths.

\subsection{Rays and cones}\label{sec:appendix-rays-and-cones}
Modulo the above described lineality space, the fan $\Phspace$ is a simplicial fan in which every cone is generated by $d+n-3$ rays.
Recall from \Cref{sec:bicolored-phylogenetic-trees} that the rays of $\Phspace$ correspond to bicolored splits. As partitions, these are partitions $(S,([d]\sqcup[n])\setminus S)$ of the leaves $[d]\sqcup[n]$ such that both $S$ and $([d]\sqcup[n])\setminus S$ contain leaves of both color classes. 
As trees, these are bicolored phylogenetic trees with a unique internal edge separating the leaves in $S$ and $([d]\sqcup[n])\setminus S$. More precisely, if $r = \{\lambda A \mid \lambda \geq 0 \}$ is a ray of the fan $\Phspace$, then for each $\lambda >0$ the \Cref{construction:bicolored-phylogenetic-trees} produces the same split tree.

Two (bicolored) splits $(S_1,([d]\sqcup[n])\setminus S_1)= (S_1, S_1^c), (S_2,([d]\sqcup[n])\setminus S_2)=(S_2, S_2^c)$ are \emph{compatible} if 
\[
S_1 \subseteq S_2 \ \text{ or } \ S_1 \subseteq S_2^c \ \text{ or } \ S_2 \subseteq S_1^c \ \text{ or } \
S_1^c  \subseteq  S_2^c  .
\]
A set of bicolored splits form a cone $C$ in $\Phspace$ if and only if they are pairwise compatible. If a bicolored tree $P$ is contained in the interior of $C$, then the rays of $C$ are in correspondence to the bounded edges of $P$.

\begin{example}
	Consider the matrices 
	\[
	A = \ma 
	0 & 0 & 0 \\
	0 & 0 & 0 \\
	0 & 1& 1 \trix, \ 
	B = \ma 
	1 & 0 & 0 \\
	0 & 0 & 0 \\
	0 & 0 & 0 \trix 
	\]
	Both $A$ and $B$ span rays in $\Phspace$, as the corresponding phylogenetic trees are the splits $(r_1r_2g_1, r_3g_2 g_3)$ and $(r_1g_1, r_2 r_3 g_2 g_3)$ respectively. Note that the splits are compatible ($r_1g_1 \subseteq r_1r_2g_1$), so they span a $2$-dimensional cone. The sum $A+B$ is a point in the interior of the cone, and corresponds to the tree with exactly these two splits. Hence, the rays $A, B$ correspond to the bounded edges of the tree $A+B$.
\end{example}

The fan $\Gr$ is, modulo its lineality space, a simplicial fan in which every cone is generated by $d+n-3$ rays.
Recall from \Cref{sec:tropical-grassmannian} that the rays of $\Gr$ correspond to (uncolored) splits. As partitions, these are partitions $(S,([d+n])\setminus S)$ of the leaves $[d+n]$. 
As trees, these are phylogenetic trees with a unique internal edge separating the leaves in $S$ and $([d+n])\setminus S$. More precisely, if $r = \{\lambda p \mid \lambda \geq 0 \}$ is a ray of the fan $\Gr$, then for each $\lambda >0$ this is the tree metric for a tree with this topology.

Similarly to their bicolored counterparts,
two (uncolored) splits $(S_1, S_1^c), (S_2, S_2^c)$ are \emph{compatible} if 
\[
S_1 \subseteq S_2 \ \text{ or } \ S_1 \subseteq S_2^c \ \text{ or } \ S_2 \subseteq S_1^c \ \text{ or } \
S_1^c  \subseteq  S_2^c  .
\]
A set of splits forms a cone $C$ in $\Gr$ if and only if they are pairwise compatible. If a tree $P$ is contained in the interior of $C$, then the rays of $C$ are in correspondence to the bounded edges of $P$.

\begin{example}
	Let $d+n = 5$ and consider the tropical Plücker vectors
	
	\[
	\begin{array}{rcccccccccc}
		&12 & 13 & 14 & 15 & 23 & 24 & 25 & 34 & 35 & 45 \\
		p = ( &  0 &  -1& -1 &  -1  & -1 &  -1 &  -1 &  0  &  0 &  0) \\
		q =	( &0   &  0  & -1 &  -1  &   0 &  -1 &  -1 & -1  & -1 &  0 )
	\end{array}
	\]
	Both $p$ and $q$ span rays in $\Gr$, as the corresponding phylogenetic trees are split trees with splits $(12, 345)$ and $(123, 45)$ respectively, and in both cases the unique internal edge has length $1$ and all leaves have length $0$.
	Note that the splits are compatible ($12 \subseteq 123$), so they span a $2$-dimensional cone. The sum $p+q$ is a point in the interior of the cone, and corresponds to the tree with exactly these two splits. Moreover, the length of each internal edge is $1$ and all leaf lengths are $0$.
\end{example}

\section{Additional proofs}
\label{sec:additional-proofs}

\subsection{Positivity in tropical geometry}\label{sec:appendix-proof-positivity}

\begin{proof}[Proof of \Cref{prop:mixed-to-real-lifts}]
	Without loss of generality, we assume $d \leq n$.
	We first show the claim for a matrix of full rank $d$. 
	First set $B_{ij} = \lt(A_{ij})$.
	If the rank of the resulting matrix $B$ is less than $d$, then we can add terms of higher degree with generic real coefficients to the entries of $B$ to obtain a matrix of full rank such that $\lt(B_{ij}) = \lt(A_{ij})$ for all $(i,j) \in [d]\times [n]$ as claimed.

	Let now $\rk A = r < d$.
	We can assume that the first $r$ rows of $A$ are linearly independent and write the bottom rows $A_i$ for $r+1 \leq i \leq d$ as linear combinations of the first $r$, say
	\[
		A_i = \sum_{k=1}^r c_k^i A_k
	\]
	with $c_k^i \in \C$.
	We write $a_{kj} t^{\alpha_{kj}}$ for the leading term of $A_{kj}$ (for $(k,j)\in [r]\times [n]$) and $b_k^i t^{\beta^i_k}$ for the leading term of $c_k^i$ so that $a_{kj}\in \RR$ and $b_k^i\in \CC$.
	If the entry $A_{ij}$ for $i\geq r+1$ is non-zero, then its leading coefficient is therefore of the form $\sum_{k \in S} a_{kj}b_k^i$ for some subset $S\subseteq [r]$.
	We thus know that $\sum_{k \in S} a_{kj}b_k^i \in \RR$. Note that since $a_{ik} \in \RR$, this implies $\sum_{k \in S} b_k^i \in \RR$ and hence 
	\[
		\frac{1}{2} \sum_{k \in S} (b_k^i + \overline{b_k^i} ) = \frac{1}{2} \sum_{k \in S} b_k^i + \overline{ \sum_{k \in S} b_k^i}  =  \sum_{k \in S} b_k^i .
	\]
	
	To get the matrix $B$ as desired, we apply the first part of the proof to the first $r$ rows of $A$ so that we get rows $B_1,\ldots,B_r$ where each entry is a real Puisseux series.
	To fill in the last rows, we replace $c_k^i$ by $(c_k^i + \overline{c_k^i})/2 \in \R$, where $\overline{c}$ for a Puisseux series $c\in \C$ is defined as the series whose coefficients are the complex conjugate of the coefficients of $c$.
	Setting
	\[
		B_i = \sum_{k=1}^r \frac{1}{2} (c_k^i + \overline{c_k^i}) A_k
	\]
	for $i\geq r+1$ gives the leading term of $B_{ij}$ as $ \sum_{k \in S} \frac{1}{2} (b_k^i + \overline{b_k^i})  a_{ik} = \sum_{k \in S} b_k^i a_{ik}$.
\end{proof}

\begin{proof}[Proof of \Cref{prop:kapranov-rank-points-on-hyperplane}]
	Let $A \in \tropdet$. Then $A$ has Kapranov rank $r' \leq r$, and there exists a matrix $\tilde{A} \in \C^{d \times n}$ of rank at most $r'$ such that $A = \val(\tilde{A})$. Hence, the columns of $\tilde{A}$ are $n$ points on a linear space $H \subseteq \C^d$ of dimension $r'$. Equivalently, we can view them as $n$ points on a linear space in $\C\mathbb P^{d-1}$ of dimension $r'-1$. The tropicalization of this linear subspace yields an linear space of the same dimension in $\TP^{d-1}$ containing the columns of $A$.
\end{proof}

\subsection{Bicolored phylogenetic trees}\label{sec:appendix-proofs-bicolored-trees}

	\begin{figure}
	\centering
	\begin{tikzpicture}[scale=0.8]
		\node[anchor=east,DarkGreen] at (-1,1) {$g_1$};
		\node[anchor=east,red] at (-1,-1) {$r_1$};
		\node[anchor=north,red] at (1,-1) { $r_2$};
		\draw[thick] (0,0) -- (2,0);
		\draw[thick, dashed](2,0) -- (3,0);
		\draw[thick,DarkGreen] (0,0) -- (-1,1);
		\draw[thick,red] (0,0) -- (-1,-1);
		\draw[thick,red] (1,0) -- (1,-1);
		\draw[dashed,thick] (3.8,0) circle (2em);
		\node at (2,-2) {$P$};
	\end{tikzpicture}  \hspace{2 em}
	\begin{tikzpicture}[scale=0.8]
		\node[anchor=east,DarkGreen] at (-1,1) {$g_1$};
		\node[anchor=east,red] at (-1,-1) {$r_2$};
		\node[anchor=north,red] at (1,-1) { $r_1$};
		\draw[thick] (0,0) -- (2,0);
		\draw[thick, dashed](2,0) -- (3,0);
		\draw[thick,DarkGreen] (0,0) -- (-1,1);
		\draw[thick,red] (0,0) -- (-1,-1);
		\draw[thick,red] (1,0) -- (1,-1);
		\draw[dashed,thick] (3.8,0) circle (2em);
		\node at (2,-2) {$P'$};
	\end{tikzpicture}\\ \vspace*{1em}
	\begin{tikzpicture}[scale=0.8]
		\node[anchor=east] at (-1,1) {$e_1$};
		\node[anchor=east] at (-1,-1) {$e_2$};
		\filldraw (-.5,0.5) circle (3 pt) {};
		\node[anchor=south west] at (-.5,.5) { $A_1$};
		\node[anchor = north] at (0,0) {$\mathbf 0$};
		\node[anchor = south west] at (0.5,0) {$E$};
		\draw[thick] (0,0) -- (2,0);
		\draw[thick, dashed](2,0) -- (3,0);
		\draw[thick] (0,0) -- (-1,1);
		\draw[thick] (0,0) -- (-1,-1);
		\draw[dashed,thick] (3.8,0) circle (2em);
		\node at (2,-1.5) {$A$};
	\end{tikzpicture}  \hspace{2 em}
	\begin{tikzpicture}[scale=0.8]
		\node[anchor=east] at (-1,1) {$e_1$};
		\node[anchor=east] at (-1,-1) {$e_2$};
		\filldraw (-.5,-0.5) circle (3 pt) {};
		\node[anchor=north west] at (-0.5,-0.5) { $B_1$};
		\node[anchor = south] at (0,0) {$\mathbf 0$};
		\node[anchor = south west] at (0.5,0) {$E'$};
		\draw[thick] (0,0) -- (2,0);
		\draw[thick, dashed](2,0) -- (3,0);
		\draw[thick] (0,0) -- (-1,1);
		\draw[thick] (0,0) -- (-1,-1);
		\draw[dashed,thick] (3.8,0) circle (2em);
		\node at (2,-1.5) {$B$};
	\end{tikzpicture} 
	\caption{The trees $P$ and $P'$ from \Cref{lem:triangulation-in-bicolored-trees} and the corresponding point configurations $A$ and $B$.}
	\label{fig:lemma-triangulation-in-bicolored-trees}
\end{figure}
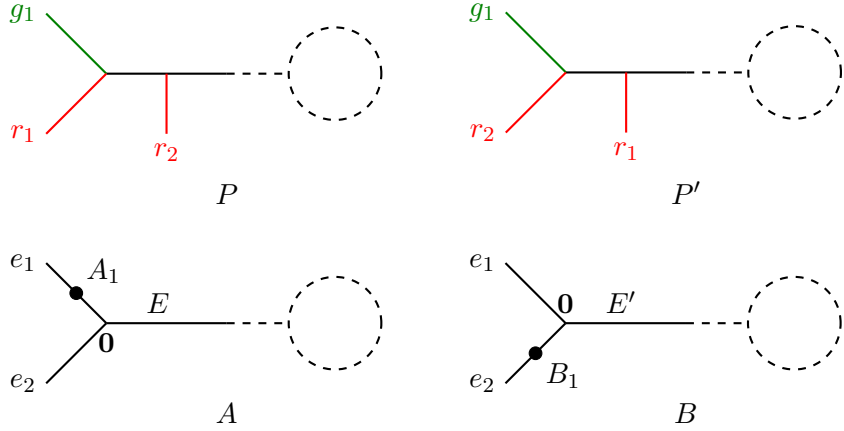

\begin{proof}[Proof of \Cref{lem:triangulation-in-bicolored-trees}]
	We reverse construction \Cref{construction:bicolored-phylogenetic-trees} to obtain a point configuration of points $A_1, \dots, A_n$ forming a matrix $A \in \mathcal C_P$.
	Let $A_1, \dots, A_n$  be such points and $L$ a tropical line containing them.
	The line $L$ is a $1$-dimensional polyhedral complex consisting of bounded edges and $d$ unbounded rays in directions $e_1, \dots, e_d \cong -(e_1 + \dots + e_{d-1})$.
	The splits of $P$ imply that the point $A_1$ lies on the ray of $L$ in direction $e_1$, the rays $e_1$ and $e_2$ meet in a common vertex $v$, and that there is a bounded edge $E$ of $L$ separating the rays in directions $e_1,e_2$ from all other rays (cf. \Cref{fig:lemma-triangulation-in-bicolored-trees}). Similarly, if $B$ is a matrix realizing $P'$, and $L'$ is the line through columns of $B$, then the splits of $P'$ imply that the point $B_1$ lies on the ray direction $e_2$, the rays $e_1$ and $e_2$ meet in a common vertex $v'$, and that there is an internal edge $E'$ of $L'$ separating the rays in directions $e_1,e_2$ from all other rays. 
	Since all remaining splits of $P$ and $P'$ are the same, we can assume that all columns $A_k = B_k$ coincide for $k \geq 2$. 
	Therefore, $L=L'$ and the point configurations only differ in the position of the points $A_1, B_1$ respectively, as depicted in \Cref{fig:lemma-triangulation-in-bicolored-trees} (bottom).
	After translation of the point configurations, we can assume that both for $L$ and $L'$ the vertices $v, v'$ are the origin $\mathbf 0$. The points $A_1$ and $B_1$ can be realized by the first two unit vectors respectively. Since $L$ and $L'$ satisfy the balancing condition, the internal edges $E, E'$ have slope $-e_1 - e_2 \cong e_3 + \dots e_d$, and so all remaining points in the two point configurations can be realized as points $M = (M_1, \dots, M_d)$, where $M_1 = M_2 = 0$ and $M_k > 0$ for all $3 \leq k \leq d$. 
	Therefore, the tree $P$ can be realized as the matrix $A$, and $P'$ is induced by the matrix $B$ as follows:
	\[
	A = \left( \begin{array}{c|c}
		1 & 
		\begin{array}{cccc}
			0 & 0 & \dots & 0
		\end{array} \\ 
		0 & 
		\begin{array}{cccc}
			0 & 0 & \dots & 0
		\end{array} \\ \hline
		\begin{array}{c}
			0 \\
			\vdots \\
			0
		\end{array} &
		M
	\end{array}\right) \ 
	B = \left( \begin{array}{c|c}
		0 & 
		\begin{array}{cccc}
			0 & 0 & \dots & 0
		\end{array} \\ 
		1 & 
		\begin{array}{cccc}
			0 & 0 & \dots & 0
		\end{array} \\ \hline
		\begin{array}{c}
			0 \\
			\vdots \\
			0
		\end{array} &
		M
	\end{array}\right),
	\]
	
	where $M$ is some $(d-2)\times (n-1)$-matrix such that $M_{ij} > 0\ \forall i \in [d-2], j \in [n-1]$.

	We need to show that, for a fixed $(3 \times 3)$-minor $f^{IJ}$, initial forms selected by the weights in $A$ and in $B$, respectively, are the same.
	So let $i_1 < i_2 <i_3, j_1 < j_2 < j_3$ be the indices of three rows and three columns, respectively.
	They define the polynomial $f$ corresponding to the $(3\times 3)$-minor
	\begin{equation*}
		\begin{split}
			f^{IJ} = x_{i_1 j_1} x_{i_2 j_2} x_{i_3 j_3} +  x_{i_1 j_2} x_{i_2 j_3} x_{i_3 j_1}  +  x_{i_1 j_3} x_{i_2 j_1} x_{i_3 j_2} \\
			-  x_{i_1 j_1} x_{i_2 j_3} x_{i_3 j_2}  -  x_{i_1 j_3} x_{i_2 j_2} x_{i_3 j_1}  -  x_{i_1 j_2} x_{i_2 j_1} x_{i_3 j_3}  
		\end{split} 
	\end{equation*}
	and $(3 \times 3)$-submatrices $A^{IJ}, B^{IJ}$ of $A, B$. 
	With this notation, we need to prove the equality $in_{A^{IJ}}(f^{IJ}) = in_{B^{IJ}}(f^{IJ})$,
	where $in_{A^{IJ}}(f^{IJ})$ is the polynomial consisting of those terms with minimal $A^{IJ}$-weight.
	\smallskip
	
	If $1 \not\in \{j_1, j_2, j_3\}$ or $1,2 \not \in \{i_1, i_2, i_3\}$ them $A^{IJ} = B^{IJ}$ and the claim holds.
	For $j_1 = 1$ and $i_1 = 1, i_2 = 2$ the submatrices are
	\[
	A^{IJ} = \ma
	1  & 0 & 0 \\
	0  & 0 & 0 \\
	0  & M_{i_3 j_2} & M_{i_3 j_3}
	\trix \
	\text{ and }
	B^{IJ} = \ma
	0  & 0 & 0 \\
	1  & 0 & 0 \\
	0  & M_{i_3 j_2} & M_{i_3 j_3}
	\trix
	\enspace .
	\]
	The choice $M_{ij} > 0 $ implies that 
	$in_{A^{IJ}}(f) = in_{B^{IJ}}(f) =   x_{i_1 j_2} x_{i_2 j_3} x_{i_3 j_1}   -  x_{i_1 j_3} x_{i_2 j_2} x_{i_3 j_1} $,
	where in each case both terms have weight $0$.
	If $j_1 = 1, i_1 = 1, i_2 \neq 2$ the submatrices are 
	\[
	A^{IJ} = \ma
	1  & 0 & 0 \\
	0  & M_{i_2 j_2} & M_{i_3 j_2} \\
	0  & M_{i_3 j_2} & M_{i_3 j_3}
	\trix \
	\text{ and }
	B^{IJ} = \ma
	0  & 0 & 0 \\
	0  & M_{i_3 j_2} & M_{i_3 j_2} \\
	0  & M_{i_3 j_2} & M_{i_3 j_3}
	\trix
	\enspace .
	\]
	The initial ideal with respect to these matrices can only differ if $M_{i_2 j_2} + M_{i_3 j_3}$ or $M_{i_3 j_2} + M_{i_2 j_3}$ are the terms with minimal $B^{IJ}$-weight.
	However, e.g. $M_{i_2 j_2} $ appears as weight of the term $x_{i_1 j_3} x_{i_2 j_2} x_{i_3 j_1} $, and since $M_{ij} > 0 $, the weight $M_{i_2 j_2} $ is strictly smaller.
	Finally, if $j_1 = 1, i_1 = 2$ then the argument is analogous to the above case $j_1 = 1, i_1 = 1, i_2 \neq 2$.
\end{proof}

\subsection{Bicolored Trees and the tropical Grassmannian $\Gr$}\label{sec:appendix-proofs-tropical-grassmannian}

\begin{proof}[Proof of \Cref{lem:number-of-elementary-splits}]
	By definition, an elementary split arises through the removal of an internal edge which separates $2$ leaves from all others. Within this proof, we call such an edge an ``outer edge''. Let $k$ be the number of outer edges. We double count the number of leaf-edge-incidences for outer edges (i.e. the pairs $(\text{leaf}, \text{outer edge})$ such that the leaf is adjacent to the outer edge). A single outer edge is adjacent to precisely $2$ leaves. Thus, the number of such pairs is $2k$. 
	On the other hand, a single leaf is adjacent to at most one outer edge. The tree has $d+n$ leaves, and hence the number of pairs is at most $d+n$. Combining both counts yields $2k \leq d +n$ or equivalently $k \leq \frac{d+n}{2}$.
\end{proof}

\begin{proof}[Proof of \Cref{prop:numer-of-bicolorings}]
	Let $P$ be a phylogenetic tree on $d+n$ leaves, and suppose $P$ has a $(d,n)$-bicoloring.
	Since every elementary split contains exactly one leaf of each color in the part with $2$ elements, it follows directly that there are at least $k$ leaves of each color, and thus $k \leq d$ and $k \leq n$.\\
	Conversely, let $k$ be the number of elementary splits of $P$ and $k\leq \min(d,n)$.
	For each of these $k$ elementary splits there are exactly $2$ possible colorings of the part with two elements.
	Thus, there are exactly $2^k$ possible colorings of the sets of size $2$ of the elementary splits.
	
	Since $P$ has $d+n$ leaves in total, there are $d+n-2k$ remaining leaves to color: $d-k$ in one color and $n-k$ in the other color.
	Note that for any such coloring, the removal of any internal edge will split the colored tree into $2$ parts, with leaves of both colors in both parts.
	Thus, any such coloring is a bicoloring.
	In total there are hence
	\[
	2^k {d+n - 2k \choose n-k} = 2^k {d+n-2k \choose d-k}
	\]
	$(d,n)$-bicolorings of a maximal tree with $k$ elementary splits. 
	Finally, by \Cref{lem:number-of-elementary-splits} a maximal phylogenetic tree on $m$ leaves has at most $k\leq {m \over 2}$ elementary splits.
	Thus, choosing values for $d,n\in \N$ such that $d+n = m$ and $k\leq \min(d,n)$ is always possible.
	For non-maximal leaves this is a lower bound: Let $(A,B$ be an inclusion-minimal split, i.e. a split such that $|A|\leq |B|$ and there exists no split $(C,D)$ such that $C \subseteq A$ or $D \subseteq A$. Then $A$ contains at least $2$ leaves and we can apply the same argument as above to inclusion-minimal splits instead of elementary splits.
\end{proof}

\subsection{Bicoloring trees and back}\label{sec:appendix-proofs-bicoloring-trees}

We devote this section to the proof of \Cref{th:pluecker-bijection}. To this end, fix a partition $(R,G)$ of $[d+n]$ such that $|R|=d$ and $|G| = n$.

\begin{lemma}\label{lem:representation-split-trees-bicoloing}
	Let $\Ph$ be the (uncolored) phylogenetic split tree with one internal edge of length $\lambda$, and $m=d+n$ leaves, where the removal of the internal edge splits the leaves into two parts $\mathcal S_1$ and $\mathcal S_2$. 
	Let $\bicoltree$ be the bicolored split tree
	$(R_1\sqcup G_1 , R_2 \sqcup G_2)$, i.e. a tree with one internal edge of positive length, red leaves $R=\set{r_1,\dots,r_n}$, and green leaves $G=\set{g_1,\dots,g_d}$, where the removal of the internal edge splits the leaves into two parts $R_1\sqcup G_1$ and $R_2 \sqcup G_2$, and additionally $R_1 \sqcup R_2 = R, G_1 \sqcup G_2 = G$.
	\begin{enumerate}[(i)]
		\item Let $p \in \Gr$ be the tropical Plücker vector corresponding to $P$. Then $p \sim p^\lambda$ modulo the lineality space of $\Gr$, where
		\[
		p^\lambda_{ij} = \begin{cases}
			-\lambda & \text{ if } i\in \mathcal S_1 \text{ and } j \in \mathcal S_2, \\
			0 & \text{ if }  i,j\in \mathcal S_1 \text{ or } i,j \in \mathcal S_2.
		\end{cases}
		\]

		\item Let $A \in \Phspace$ be a matrix corresponding to $\bicoltree$. Then there exists a unique $\lambda >0$ such that $A \sim  A^\lambda$ modulo the lineality space of $\Phspace$, where  $A^\lambda$ is the matrix with columns 
		$$
		A^\lambda_j = \begin{cases}
			- \lambda\sum_{i\in R_2} e_i & \text{ if } j\in G_1 \\
			- \lambda\sum_{i\in R_1} e_i & \text{ if } j\in G_2.
		\end{cases}.
		$$
	\end{enumerate}
\end{lemma}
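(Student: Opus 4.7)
Both parts reduce to the observation that for a split tree, the lineality space is large enough to absorb all choices of leaf lengths, so that after quotienting only the internal edge length $\lambda$ remains as a parameter.

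For (i), the tree-metric description~\eqref{eq:Pluecker-vector-tree-metric} gives, for leaf lengths $\ell_k \ge 0$ in $\Ph$,
\[
  p_{ij} = -(\ell_i + \ell_j) - \lambda\cdot \mathbf{1}[i,j \text{ separated by the internal edge}],
\]
so $p_{ij} - p^\lambda_{ij} = -(\ell_i + \ell_j)$ for every $(i,j)$. The $k$-th generator $v_k = \sum_{l \ne k}\tilde e_{lk}$ of the lineality of $\Gr$ has $(ij)$-entry $\mathbf{1}[k\in\{i,j\}]$, so $\sum_k \ell_k v_k$ has $(ij)$-entry $\ell_i + \ell_j$. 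Thus $p = p^\lambda - \sum_k \ell_k v_k \sim p^\lambda$ modulo lineality.

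For (ii), proceed analogously, but now with two distinct families of lineality generators of $\Phspace$, one recording red leaf lengths (the row generators $\sum_j E_{ij}$) and one recording green leaf lengths (the column generators $\sum_i E_{ij}$) from~\eqref{eq:lineality space-full}. A matrix $A$ representing $\bicoltree$ encodes the bicolored tree metric via $A_{ij} = -d_{\bicoltree}(r_i, g_j)$; this can be read off directly from Construction~\ref{construction:bicolored-phylogenetic-trees}, or by the formula check that $\pi^{(R,G)}(p^\lambda) = A^\lambda$ together with the observation that $\pi^{(R,G)}$ sends the $\Gr$-generator $v_k$ to $\sum_j E_{kj}$ when $k \in R$ and to $\sum_i E_{ik}$ when $k \in G$. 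For the split tree $\bicoltree$ with internal edge length $\lambda$ and leaf lengths $\mu_{r_i}$, $\nu_{g_j}$, the tree metric then yields
\[
  A_{ij} - A^\lambda_{ij} = -(\mu_{r_i} + \nu_{g_j})
\]
for every $(i,j)$, which is the $(ij)$-entry of $-\sum_i \mu_{r_i}\sum_j E_{ij} - \sum_j \nu_{g_j}\sum_i E_{ij}$, a combination of lineality generators of $\Phspace$. Hence $A \sim A^\lambda$.

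For uniqueness of $\lambda$, suppose $A^\lambda \sim A^{\lambda'}$ modulo lineality. Then $A^\lambda - A^{\lambda'}$ has the form $(c_i + d_j)_{ij}$, with entries $0$ on same-part pairs $(r_i, g_j)$ and $\lambda' - \lambda$ on cross-part pairs. For any $i_1 \in R_1$, $i_2 \in R_2$, $j_1 \in G_1$, $j_2 \in G_2$, adding the cross-part equations $c_{i_1}+d_{j_2} = c_{i_2}+d_{j_1} = \lambda'-\lambda$ and subtracting the same-part equations $c_{i_1}+d_{j_1} = c_{i_2}+d_{j_2} = 0$ forces $0 = 2(\lambda'-\lambda)$, so $\lambda = \lambda'$. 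The main obstacle is cleanly setting up the bicolored tree-metric identity $A_{ij} = -d_{\bicoltree}(r_i, g_j)$ without circularly invoking \Cref{th:pluecker-bijection}; once this identification is made, everything else is a direct computation of absorbing leaf-length shifts into the lineality space.
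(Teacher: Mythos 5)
Part (i) of your proof matches the paper's argument essentially verbatim: you decompose $p = p^\lambda + p^\ell$ with $p^\ell_{ij} = -(\ell_i+\ell_j)$ and recognize $p^\ell$ as a combination $-\sum_k \ell_k v_k$ of lineality generators. Fine.

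Part (ii) is where you diverge, and you have correctly diagnosed the gap yourself: your computation rests on the identity $A_{ij} = -d_{\bicoltree}(r_i,g_j)$, but this identity is nowhere established and is not immediate from \Cref{construction:bicolored-phylogenetic-trees}, which produces the bicolored tree geometrically from a point configuration on a tropical line (via the tropical convex hull and shortening of rays) rather than as a tree metric read off the matrix entries. The leaf lengths produced by ``shortening the rays'' and the column normalizations (adding multiples of $(1,\dots,1)$) are both choices hidden inside the lineality class, so pinning down a pointwise identity $A_{ij} = -d(r_i,g_j)$ requires a matching of normalizations that you do not spell out. Reading it off from the projection $\pi^{(R,G)}$ is also not an option at this stage, since \Cref{lem:representation-split-trees-bicoloing} is exactly what is used to set up \Cref{th:bijection-rays} and ultimately \Cref{th:pluecker-bijection}; invoking the projection here would be circular.

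The paper's proof of (ii) avoids tree metrics entirely and is worth noting for its economy. One checks directly from \Cref{construction:bicolored-phylogenetic-trees} that each matrix $A^\lambda$ (for any $\lambda > 0$) generates the bicolored split tree $\bicoltree$. Therefore $A$ and $A^\lambda$ lie in the interior of the same cone of $\Phspace$. Because $\bicoltree$ is a split tree (one bicolored split), that cone is a single ray modulo the lineality space, generated for instance by $A^1$. Hence modulo lineality every point of this cone is of the form $\mu A^1 = A^\mu$ for a unique $\mu > 0$; applying this to $A$ gives both the existence and the uniqueness of $\lambda$ in one stroke. Your separate uniqueness argument (showing $A^\lambda - A^{\lambda'}$ lying in the lineality space forces $\lambda = \lambda'$) is correct as a computation, but it becomes unnecessary once you argue via the ray structure.

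In short: (i) is correct and standard; in (ii) your route has a genuine missing step (the metric identification), which the paper sidesteps by arguing that the cone corresponding to a split tree is one-dimensional modulo lineality.
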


\begin{proof}
	We begin by showing (i). 
	Let $\ell_1,\dots,\ell_m$ denote the lengths of the leaves $1,\dots,m$ of $P$.
	By \eqref{eq:Pluecker-vector-tree-metric} (in \Cref{sec:tropical-grassmannian}), the Plücker vector $p$ corresponding to $P$ is
	\begin{align*}
		p_{ij} &= \begin{cases}
			-\lambda - \ell_i - \ell_j & \text{ if } i\in \mathcal S_1 \text{ and } j \in \mathcal S_2\\
			-\ell_i - \ell_j & \text{ if }  i,j\in \mathcal S_1 \text{ or } i,j \in \mathcal S_2.
		\end{cases} \\
		&= p_{ij}^\lambda + p_{ij}^\ell,
	\end{align*}
	where we define
	$
	p^\ell_{ij} = - \ell_i - \ell_j \enspace .
	$
	Note that $p^\ell$ lies in the lineality space of $\Gr$, and so $p \sim p^\lambda$.
	
	For (ii), note that the bicolored split tree $\bicoltree$ is also generated by the matrix $A^\lambda $ 
	for any $\lambda > 0$.  Since $A$ and $A^\lambda$ generate the same tree, they are both contained in the interior of the same cone of $\Phspace$. Since $\bicoltree$ is a split tree, modulo lineality space of $\Phspace$, this cone is $1$-dimensional. We choose $\lambda = 1$ and consider the matrix $A^1$ as generator for this ray. Then for every point $A' \in \cone(A^1)$ there exists a unique $\lambda \geq 0$ such that $A' = \lambda A^1 = A^\lambda$. Hence, this also holds for the original matrix $A$, i.e. there exists a unique $\lambda > 0 $ such that $A \sim A^\lambda$ modulo lineality space of $\Phspace$.  
\end{proof}

\begin{lemma}\label{lem:bijection-lineality-space}
	If $p \sim p'$ then $\pi^{(R,G)}(p) \sim \pi^{(R,G)}(p')$.
	$\pi^{(R,G)}$ induces a bijection of lineality spaces of $\tropdet[2]$ and $\Gr$.
\end{lemma}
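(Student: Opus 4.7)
The plan is to compute $\pi^{(R,G)}$ directly on the explicit generators of the lineality space of $\Gr$ given in \Cref{excursion:lineality-spaces}, and show each is sent to a generator of the lineality space of $\tropdet[2]$. Concretely, I would write the Plücker coordinates of $\Gr$ as $\tilde e_{ij}$ for unordered pairs $\{i,j\}\subseteq [d+n]$, set $R=\{r_1,\dots,r_d\}$ and $G=\{g_1,\dots,g_n\}$, and note that $\pi^{(R,G)}$ keeps precisely those $\tilde e_{ij}$ with one index in $R$ and the other in $G$, identifying $\tilde e_{r_a g_b}$ with the matrix unit $E_{ab}\in \RR^{d\times n}$.

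The core step is the direct calculation on $L_k = \sum_{i\neq k}\tilde e_{ik}$. If $k=r_a\in R$, the only summands surviving the projection are those with $i\in G$, so
\[
  \pi^{(R,G)}(L_{r_a}) \;=\; \sum_{b=1}^n E_{a,b},
\]
which is the $a$th row-sum generator of the lineality space of $\tropdet[2]$. The symmetric computation for $k=g_b\in G$ yields
\[
  \pi^{(R,G)}(L_{g_b}) \;=\; \sum_{a=1}^d E_{a,b},
\]
the $b$th column-sum generator. The first assertion of the lemma is then immediate: if $p\sim p'$, then $p-p'$ is a linear combination of the $L_k$, so $\pi^{(R,G)}(p)-\pi^{(R,G)}(p')=\pi^{(R,G)}(p-p')$ is the corresponding combination of row- and column-sum generators, hence lies in the lineality space of $\tropdet[2]$. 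The same calculation simultaneously exhibits the bijection of natural generating sets, sending $L_{r_a}\mapsto \sum_{b} E_{a,b}$ and $L_{g_b}\mapsto \sum_a E_{a,b}$.

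The one subtlety to be transparent about is a dimension mismatch. The $d+n$ vectors $L_k$ are linearly independent in $\RR^{\binom{d+n}{2}}$ (a short check, since $\sum_k c_k L_k = 0$ forces $c_i+c_j=0$ for all pairs $\{i,j\}$ and hence $c_k\equiv 0$ for $d+n\ge 3$), while the $d+n$ row- and column-sum generators of $\tropdet[2]$ satisfy the single relation $\sum_a\sum_b E_{a,b}=\sum_b\sum_a E_{a,b}$. The induced linear map on lineality spaces is therefore surjective with one-dimensional kernel, spanned by $\sum_{a=1}^d L_{r_a}-\sum_{b=1}^n L_{g_b}$; the bijection claimed in the lemma is precisely the bijection of these canonical generating sets (equivalently, surjectivity onto the lineality space of $\tropdet[2]$). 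This is exactly what is needed so that $\pi^{(R,G)}$ descends to a well-defined map on the quotients by lineality, matching up the fan structures used in the subsequent proof of \Cref{th:pluecker-bijection}.
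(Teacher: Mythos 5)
Your argument follows the same outline as the paper's: compute $\pi^{(R,G)}$ directly on the explicit generators $L_k=\sum_{i\neq k}\tilde e_{ik}$ of the lineality space of $\Gr$ and identify the images with the row- and column-sum generators of the lineality space of $\tropdet[2]$. Your formulas $\pi^{(R,G)}(L_{r_a})=\sum_b E_{a,b}$ and $\pi^{(R,G)}(L_{g_b})=\sum_a E_{a,b}$ are the correct direct ones; the paper's case distinction for $p_{ij}$ is written backwards (it records $p_{ij}=0$ when $i=k$, giving the complementary sum $\sum_{i\neq a}\sum_b E_{i,b}$), but since that differs from your answer by the all-ones matrix, which already lies in the lineality space of $\tropdet[2]$, the span and hence the conclusion are unchanged.

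Your remark about the dimension mismatch is a genuine and correct observation that the paper's proof passes over. The $d+n$ vectors $L_k$ are linearly independent for $d+n\ge 3$, while the $d+n$ images satisfy the single relation that the sum of the row-sum generators equals the sum of the column-sum generators; so the restriction of $\pi^{(R,G)}$ to the lineality space of $\Gr$ is a surjection with a one-dimensional kernel, spanned as you note by $\sum_a L_{r_a}-\sum_b L_{g_b}$, and cannot literally be a linear isomorphism. The paper's downstream use of this lemma (e.g. in the proof of \Cref{th:pluecker-bijection}, where it asserts a unique preimage $p^\ell$ in the lineality space) in fact only requires the surjectivity you establish together with linearity, so nothing breaks; you are right that ``bijection of lineality spaces'' is best read as a bijection of the canonical generating sets, equivalently surjectivity onto the lineality space of $\Phspace$.
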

\begin{proof}
	Note, that since $\Grsub$ is a subfan of $\Gr$, for the respective lineality spaces holds the reverse inclusion.
	We first show that the images of the $d+n$ generators of the lineality space of $\Gr$ (as described in \Cref{excursion:lineality-spaces}) span the lineality space of $\Phspace$.
	 This implies that $\pi^{(R,G)}$ induces a bijection of lineality spaces. 
	Let $p = 
	\sum_{\substack{ i \in [d+n] \\ i \neq k }} \tilde{e}_{ik}, \ k \in [d+n]
	$. Since $R \sqcup G = [d+n]$, either $k \in R$ or $k \in G$. If $k \in R$, then for $i \in R, j \in G$ we have
	\[
	p_{ij} = 
	\begin{cases}
		0 & \text{ if } i=k \\
		1 & \text{ otherwise} \ .
	\end{cases}
	\]
	Thus, $\pi^{(R,G)}(p)$ is the matrix  $\pi^{(R,G)}(p)=\sum_{\substack{i =1 \\ i \neq k}^d}\sum_{j=1}^n E_{ij} $. On the other hand, if $k\in G$, then  $\pi^{(R,G)}(p)=\sum_{i=1}^d \sum_{\substack{j =1 \\ j \neq k}^d} E_{ij} $. 
	Indeed, these vectors span the same vector space as the vectors given in \eqref{eq:lineality space-full} in \Cref{excursion:lineality-spaces}.
	Hence, $\pi^{(R,G)}$ induces a bijection of lineality spaces of $\Gr$ and $\Phspace$. 
	
	Let $p, p' \in \Gr$ arbitrary, such that $p \sim p'$ modulo lineality space $\mathcal L$ of $\Gr$. Then $p \in p' + \mathcal L$, and since $\pi^{(R,G)}$ is linear, it follows that $\pi^{(R,G)}(p + \mathcal L) = \pi^{(R,G)}(p) + \pi^{(R,G)}(\mathcal L)$, where $\pi^{(R,G)}(\mathcal L)$ is the lineality space of $\Phspace$ by the above. Hence, $\pi^{(R,G)}(p) \sim \pi^{(R,G)}(p')$. 
\end{proof}

\begin{prop}\label{th:bijection-rays}
	Let $(R,S)$ be disjoint sets of size $n$ and $d$ respectively.
	The map $\pi^{(R,S)}$ induces a bijection of rays of the fans $\Grsub$  and $\Phspace$ and preserves the combinatorial types of split trees.
\end{prop}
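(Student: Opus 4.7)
The plan is to use the explicit ray representatives provided by \Cref{lem:representation-split-trees-bicoloing} and show that, modulo the lineality spaces, $\pi^{(R,G)}$ sends these representatives to each other in a bijective, split-preserving way. The rays of $\Grsub$ (modulo lineality) correspond to admissible uncolored splits $(\mathcal S_1, \mathcal S_2)$ of $[d+n]$, while the rays of $\Phspace$ correspond to bicolored splits $(R_1 \sqcup G_1, R_2 \sqcup G_2)$ with $R = R_1 \sqcup R_2$, $G = G_1 \sqcup G_2$, and all four parts nonempty.

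First I would define the correspondence on splits: given an uncolored split $(\mathcal S_1, \mathcal S_2)$ that is admissible for $(R,G)$, set $R_k = R \cap \mathcal S_k$ and $G_k = G \cap \mathcal S_k$; admissibility exactly says the four resulting sets are nonempty, so this is a bicolored split. Conversely, a bicolored split $(R_1 \sqcup G_1, R_2 \sqcup G_2)$ determines the uncolored split $\mathcal S_k = R_k \cup G_k$, which is automatically admissible. Thus these two operations are mutually inverse on the level of combinatorial types of split trees.

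Next I would check that this combinatorial bijection is realized by $\pi^{(R,G)}$ on the level of ray generators. By \Cref{lem:representation-split-trees-bicoloing}(i), a ray of $\Grsub$ corresponding to the split $(\mathcal S_1,\mathcal S_2)$ has a representative $p^\lambda$ with $p^\lambda_{ij} = -\lambda$ when $i,j$ lie in opposite parts and $p^\lambda_{ij} = 0$ when they lie in the same part. Restricting to indices $i \in R, j \in G$, the entries $\pi^{(R,G)}(p^\lambda)_{ij}$ equal $-\lambda$ precisely when $i \in R_1, j \in G_2$ or $i \in R_2, j \in G_1$, and $0$ otherwise. This is exactly the matrix $A^\lambda$ of \Cref{lem:representation-split-trees-bicoloing}(ii) for the bicolored split $(R_1 \sqcup G_1, R_2 \sqcup G_2)$. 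Combined with \Cref{lem:bijection-lineality-space}, which guarantees $\pi^{(R,G)}$ descends to a well-defined map on the respective quotients by lineality, this shows $\pi^{(R,G)}$ sends the ray of $\Grsub$ generated by $p^\lambda$ onto the ray of $\Phspace$ generated by $A^\lambda$.

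Finally I would conclude bijectivity. Injectivity on rays follows because distinct admissible uncolored splits produce distinct bicolored splits (the splits can be recovered from the support pattern of $A^\lambda$), and the ray $\cone(A^\lambda)$ determines the bicolored split uniquely. Surjectivity follows because every bicolored split arises from some admissible uncolored split by the correspondence described above, and its ray generator in $\Phspace$ is hit by the image of the corresponding $p^\lambda$. Preservation of the combinatorial type of split trees is immediate from the explicit correspondence of splits. The proof is essentially bookkeeping; the only subtle point is ensuring that admissibility of $(R,G)$ is exactly the condition needed to make the correspondence surjective onto bicolored splits, which is why the domain is restricted to $\Grsub$ rather than all of $\Gr$.
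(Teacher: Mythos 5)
Your proof is correct and takes essentially the same route as the paper's: both rely on \Cref{lem:representation-split-trees-bicoloing} for the canonical representatives $p^\lambda$ and $A^\lambda$, verify that $\pi^{(R,G)}(p^\lambda) = A^\lambda$, and invoke \Cref{lem:bijection-lineality-space} to pass to the quotients by the lineality spaces. Your one small organizational improvement is making the combinatorial bijection between admissible uncolored splits and bicolored splits explicit up front; the paper leaves that implicit and instead spells out the surjectivity step more carefully by writing an arbitrary ray generator $A$ of $\Phspace$ as $A^\lambda + A^\ell$ and constructing a preimage $p^\lambda + p^\ell$, which is the mild gap your phrase ``its ray generator is hit by the image of $p^\lambda$'' glosses over.
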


\begin{proof}
	Let $p \in \Grsub \subseteq \Gr$ be a ray generator. Then $p$ corresponds to a split tree $\Ph$ with two parts $\mathcal S_1, \mathcal S_2$ of leaves.
	By \Cref{lem:representation-split-trees-bicoloing} we have that $p \sim p^\lambda$ modulo the lineality space of $\Gr$. By construction, $\pi^{(R,S)}(p^\lambda) = A^\lambda$, so by \Cref{lem:bijection-lineality-space} $p \sim p^\lambda$ implies $\pi^{(R,G)}(p) \sim A^\lambda$. 
	Note that $A^\lambda$ is the matrix representing the $(R,G)$-bicoloring $P^{(R,G)}$ of $P$. Hence, $\pi^{(R,G)}$ sends rays of $\Grsub$ onto rays of $\Phspace$, preserving the combinatorial type of split trees. Note that this implies injectivity: Let $r, r'$ be distinct rays of $\Grsub$ with respective ray generators $p, p'$. Then $p,p'$ correspond to phylogenetic split trees $P,P'$ of distinct combinatorial types. Since the $(R,G)$-bicoloring is admissible for both $P$ and $P'$, the matrices $\pi^{(R,G)}(p)$ and $ \pi^{(R,G)}(p')$ correspond to bicolored trees of distinct combinatorial types. Hence, $\pi^{(R,G)}(r)$ and $ \pi^{(R,G)}(r')$ are distinct rays of $\Phspace$.
	
	For surjectivity, let $r = \cone(A)$ be a ray of $\Phspace$. Then there is a unique $\lambda >0$ such that $A \sim A^\lambda$. Let $A^\ell$ be a matrix in the lineality space of $\Phspace$ such that $A = A^\lambda + A^\ell$. 
	By construction $\pi^{(R,G)}(p^\lambda) = A^\lambda$ and since $\pi^{(R,G)}$ is a bijection on the lineality spaces, there is a unique $p^\ell$ in the lineality space of $\Gr$ such that $A^\ell = \pi^{(R,G)}(p^\ell)$. Again, since $\pi^{(R,G)}$ is linear, it follows for $p := p^\lambda + p^\ell$ that
	\[
	\pi^{(R,G)}(p) = \pi^{(R,G)}(p^\lambda) + \pi^{(R,G)}(p^\ell) = A^\lambda + A^\ell = A. 
	\]
\end{proof}

\begin{proof}[Proof of \Cref{th:pluecker-bijection}]
	\Cref{th:bijection-rays} establishes the statement for rays. 
	Let $p \in \Grsub$ be an arbitrary Plücker vector and
	$P$ be the  corresponding uncolored phylogenetic tree with splits. Modulo lineality space of $\Gr$, we can assume that $P$ has all leaf lengths $0$.
	Let $S_1, \dots, S_k$ be the compatible splits of $P$ with internal edge lengths $\lambda_1, \dots, \lambda_k$,
	and $p^{S_1},\dots,p^{S_k}$ be the Plücker vectors of the corresponding split trees $P_1, \dots, P_k$. Since the $S_i$ are the splits of $P$, it follows that $p^{S_1}+\dots +p^{S_k} = p$ and $p^{S_1},\dots,p^{S_k}$ are ray generators for the rays of the cone $C$ such that $p\in \interior(C)$. Since $C$ is simplicial, this sum is unique. By above, $\pi^{(R,G)}$ is a bijection on the level of rays, so equivalently $\pi^{(R,G)}(p^{S_1}),\dots,\pi^{(R,G)}(p^{S_k})$ is a set of rays in $\Phspace$, corresponding to the bicolored split trees $P_1^{(R,S)}, \dots, P_k^{(R,S)}$. 
	Since the definition of compatibility is independent of the coloring, these bicolored splits are compatible and hence form a cone $C'$ in $\Phspace$. Again, $\Phspace$ is a simplicial fan, and so any point $C'$ has a unique representation as sum of ray generators of $C'$. 
	Consider $A = \pi^{(R,G)}(p^{S_1})+\dots+\pi^{(R,G)}(p^{S_k})$. Then by construction, this matrix corresponds to a tree with bicolored splits $P_1^{(R,S)}, \dots, P_k^{(R,S)}$, i.e. the tree $\bicoltree$. Finally, since $\pi^{(R,G)}$ is a coordinate projection, if $i\in R, j \in G$, then 
	\[
	p_{ij} = \pi^{(R,G)}(p^{S_1} +\dots+ p^{S_k})_{ij} =  \pi^{(R,G)}(p^{S_1})_{ij}+\dots+\pi^{(R,G)}(p^{S_k})_{ij} = A_{ij}
	\]
	and so $\pi^{(R,G)}(p) = A$. Since $\pi^{(R,G)}$ is a bijection of the rays, and all sums are unique, this extends to a bijection to the entire fan.
\end{proof}

\end{document}